\newtheorem{theorem}{Theorem}[section] 
\newtheorem{lemma}[theorem]{Lemma}     
\newtheorem{corollary}[theorem]{Corollary}
\newtheorem{proposition}[theorem]{Proposition}
\newtheorem{definition}[theorem]{Definition}
\newtheorem{remark}[theorem]{Remark}
\newtheorem{example}[theorem]{Example}
\newtheorem{notation}[theorem]{Notation} 
\newcommand{\fa}{\mbox{$\mathfrak{a}$}}
\newcommand{\fb}{\mbox{$\mathfrak{b}$}}
\newcommand{\fc}{\mbox{$\mathfrak{Q}$}}
\newcommand{\fm}{\mbox{$\mathfrak{m}$}}
\newcommand{\fp}{\mbox{$\mathfrak{p}$}}
\newcommand{\bn}{\mbox{$\mathbb{N}$}}
\newcommand{\bz}{\mbox{$\mathbb{Z}$}}
\newcommand{\bq}{\mbox{$\mathbb{Q}$}}
\newcommand{\br}{\mbox{$\mathbb{R}$}}
\newcommand{\bc}{\mbox{$\mathbb{C}$}}
\newcommand{\ba}{\mbox{$\mathbb{A}$}}
\newcommand{\bm}{\mbox{$\mathcal{M}$}}
\newcommand{\bl}{\mbox{$\mathcal{L}$}}
\newcommand{\bt}{\mbox{$\mathcal{T}$}}
\newcommand{\rad}{\mbox{${\rm rad}$}}
\newcommand{\height}{\mbox{${\rm height}$}}
\newcommand{\grade}{\mbox{${\rm grade}$}}
\newcommand{\rank}{\mbox{${\rm rank}$}}
\newcommand{\ch}{\mbox{${\rm char}$}}
\newcommand{\diag}{\mbox{${\rm diag}$}}
\newcommand{\trdeg}{\mbox{${\rm trdeg}$}}
\newcommand{\nulls}{\mbox{${\rm Nullspace}$}}
\newcommand{\adj}{\mbox{${\rm adj}$}}
\newcommand{\hull}{\mbox{${\rm Hull}$}}
\newcommand{\fit}{\mbox{${\rm Fitt}$}}
\title{The primary components of positive critical binomial ideals}
\author{Liam O'Carroll and Francesc Planas-Vilanova}
\date{\today}
\subjclass[2000]{13A05, 13A15 (primary), 13P05, 13P15 (secondary)}
\keywords{Herzog-Northcott ideal; positive critical binomial ideal;
  Smith Normal Form.\\ The authors gratefully acknowledge financial
support from the research grant MTM2010-20279-C02-01 and the
Universitat Polit\`ecnica de Catalunya during the development of this
research.}
\begin{document}

\begin{abstract}
A natural candidate for a generating set of the (necessarily prime)
defining ideal of an $n$-dimensional monomial curve, when the ideal is
an almost complete intersection, is a full set of $n$ critical
binomials. In a somewhat modified and more tractable context, we prove
that, when the exponents are all positive, critical binomial ideals in
our sense are not even unmixed for $n\geq 4$, whereas for $n\leq 3$
they are unmixed. We further give a complete description of their
isolated primary components as the defining ideals of monomial curves
with coefficients. This answers an open question on the number of
primary components of Herzog-Northcott ideals, which comprise the case
$n=3$.  Moreover, we find an explicit, concrete description of the
irredundant embedded component (for $n\geq 4$) and characterize when
the hull of the ideal, i.e., the intersection of its isolated primary
components, is prime. Note that these last results are independent of
the characteristic of the ground field. Our techniques involve the
Eisenbud-Sturmfels theory of binomial ideals and Laurent polynomial
rings, together with theory of Smith Normal Form and of Fitting
ideals. This gives a more transparent and completely general approach,
replacing the theory of multiplicities used previously to treat the
particular case $n=3$.
\end{abstract}

\maketitle

\section{Introduction}\label{introduction}

Let $A=k[x_1,\ldots ,x_n]$ be the polynomial ring in $n$ variables
$\underline{x}=x_1,\ldots ,x_n$ over a field $k$, $n\geq 2$. Set
$\fm=(\underline{x})$, the maximal ideal generated by the $x_i$. Let
$a_{i,j}\in\bn$, $i,j=1,\ldots ,n$, with $a_{i,i}=\sum_{j\neq
  i}a_{i,j}$, and let $L$ be the $n\times n$ integer matrix defined as
follows:
\begin{eqnarray*}
L=\left(\begin{array}{rrcr}a_{1,1}&-a_{1,2}&\ldots &-a_{1,n}\\
-a_{2,1}&a_{2,2}&\ldots &-a_{2,n}\\\vdots&\vdots&\ldots&\vdots\\
-a_{n,1}&-a_{n,2}&\ldots &a_{n,n}\end{array}\right),
\end{eqnarray*}
where the sum of the entries of each row is zero. (Here $\bn$ denotes
the set of positive integers.) For ease of reference, $L$ will be
called a {\em positive critical binomial matrix} (PCB matrix, for
short). Set $d\in\bn$ to be the greatest common divisor of the
$(n-1)\times (n-1)$ minors of $L$. (We shall see below that these
minors are non-zero.)  Let $\underline{f}=f_{1},\ldots,f_{n}$ be the
binomials defined by the columns of $L$:
\begin{eqnarray*}
f_1=x_1^{a_{1,1}}-x_2^{a_{2,1}}\cdots x_n^{a_{n,1}}\mbox{, }
f_2=x_2^{a_{2,2}}-x_1^{a_{1,2}}x_3^{a_{3,2}}\cdots x_n^{a_{n,2}}
\mbox{,}\ldots\mbox{,} f_n=x_n^{a_{n,n}}-x_1^{a_{1,n}}\cdots
x_{n-1}^{a_{n-1,n}}.
\end{eqnarray*}
Let $I=(\underline{f})$ be the binomial ideal generated by the
$f_j$. We will call $I$ the {\em positive critical binomial ideal} (PCB
ideal, for short) associated to $L$.

The purpose of this paper is to investigate the primary decomposition
of PCB ideals and to contrast this theory with analogous results
in \cite{op2} concerning ideals of Herzog-Northcott type, which
comprise the case $n=3$. We first prove that, if $n\geq 4$
(respectively, $n\leq 3$), $I$ has at most $d+1$ (respectively, $d$)
primary components. This answers a question posed
in \cite[Remark~8.6]{op2}.

We will observe that $I$ is contained in a unique toric ideal $\fp_m$
associated to the monomial curve $\Gamma_m=\{(\lambda^{m_1},\ldots
,\lambda^{m_n})\in\ba^{n}_{k}\mid \lambda\in k\}$, where
$m=(m_1,\ldots ,m_n)=m(I)\in\bn^{n}$ is determined by $I$.  That is,
$\fp_m$ (referred to as the {\em Herzog} ideal associated to $m$) is
the kernel of the natural homomorphism $A\to k[t]$, $t$ a variable
over $k$, that sends each $x_i$ to $t^{m_i}$.

In somewhat more detail, if $k$ contains the $d$-th roots of unity and
the characteristic of $k$, $\ch(k)$, is zero or $\ch(k)=p$, $p$ a
prime with $p\nmid d$, we give a full description of a minimal primary
decomposition of $I$. Namely, the intersection of the isolated primary
components of $I$, $\hull(I)$, is equal to the intersection of $d$
prime toric ideals of ``monomial curves with coefficients'', i.e.,
kernels of natural homomorphisms $A\to k[t]$ that send each $x_i$ to
$\lambda_it^{m_i}$, $\lambda_i\in k$. This will explain the
``intrinsic'' role of the Herzog ideal $\fp_{m(I)}$ among the other
minimal primes of $I$ as the instance where each of the
``coefficients'' $\lambda_i$ equals $1$.

Furthermore, if $n\leq 3$, $I$ is unmixed and $I=\hull(I)$. But if
$n\geq 4$, $I$ has one irredundant embedded $\fm$-primary
component. This provides a very striking contrast between the cases
$n\leq 3$ and $n\geq 4$.  In each case we give a concrete description
of these primary components (cf. Theorems~\ref{embeddedcomp} and
\ref{main}).

We now recall briefly from \cite{op2} some relevant parts of the
theory of ideals of Herzog-Northcott type (or HN ideals, as they are
referred to). The study of HN ideals had their origin in work of
Herzog \cite{herzog} on the defining ideals $\fp_m$ of monomial space
curves $\Gamma_m$, $m\in\bn^3$, $\gcd(m)=1$. The ideals $\fp_m$, which
are Cohen-Macaulay almost complete intersection ideals of height two,
proved useful in work of the authors in settling a long-standing open
question on an aspect of the uniform Artin-Rees property
(cf. \cite{op1}); this work built on the observation that these ideals
$\fp_m$ were a particular case of a class of ideals studied by
Northcott \cite{northcott}.

In \cite{op2} we defined an HN ideal $I$ as the determinantal ideal
generated by the $2\times 2$ minors of a certain matrix. One can
easily check that HN ideals and PCB ideals are two notions that
coincide when $A=k[\underline{x}]$ with $n=3$. In \cite[Definition~7.1
  and Remark~7.2]{op2} we introduced an integer vector
$m(I)=(m_1,m_2,m_3)\in\bn^{3}$ associated to $I$. We showed that $I$
is prime if and only if the greatest common divisor of $m(I)$ is equal
to 1 (\cite[Theorem~7.8]{op2}). Further, using techniques from the
theory of multiplicities, we gave upper bounds for the number of prime
components of $I$ in terms of the $m_i$ and $\gcd(m(I))$. Finally,
using a Jacobian criterion, we showed that $I$ is radical if the
characteristic of $k$ is zero or sufficiently large. More
particularly, in \cite[Remark~8.6]{op2} we posed the open question as
to whether the number of prime components of $I$ was at most
$\gcd(m(I))$.

We now return to these matters using the Eisenbud-Sturmfels theory of
binomial ideals (see \cite{es}), and in particular their investigation
of so-called Laurent binomial ideals, to obtain a detailed positive
answer to this conjecture. The Eisenbud-Sturmfels theory used here
provides a more transparent approach that works for general $n$, and
not just when $n=3$. Note that, since a PCB ideal $I$ is binomial, so
also are its isolated primary components, their intersection
$\hull(I)$ and, for $n\geq 4$, even for a suitable choice for its
$\fm$-primary irredundant embedded component. This approach also
enables us to give an analogous criterion for $\hull(I)$ of a general
PCB ideal $I$ to be prime. Observe that when $n\geq 4$, $I$ cannot be
radical since it is not even unmixed. However, we show that, for
suitable coefficient fields $k$, $\hull(I)$ is radical; as stated
above, recall that when $n\leq 3$, $\hull(I)$ coincides with $I$,
since $I$ is then unmixed.

Notice also that for $n=4$, $I$ is somewhat related to the notion of
an ``ideal generated by a full set of critical binomials'' introduced
by Alc\'antar and Villarreal in \cite[\S3, p.~3039]{av}, although the
definitions have essential differences. 

For further background and recent related work from a similar
perspective to ours, see \cite{waldi}, \cite{eto} and especially
\cite{gastinger}, and also \cite{ojeda} and \cite{ko}.

For an alternative combinatorial approach, see the recent paper
\cite{lv} (and Remark~\ref{lvvsop} below) and the survey papers
\cite{km} and \cite{miller}. Specifically, at the end of Section~3 of
\cite{miller}, a general programme is set out whereby binomial primary
decompositions can be calculated. Substantial difficulties could
present themselves as to how this programme plays out as regards
particular binomial ideals and especially as regards abstractly
defined classes of binomial ideals. Our point of view in the present
paper is to use constructive Commutative Algebra to give explicit,
concrete descriptions of binomial primary decompositions of PCB
ideals. In particular, we present in the case of PCB ideals an
explicit solution to the `problem' mentioned in
\cite[Remark~3.5]{dmm}, in that Theorem~\ref{embeddedcomp} below
provides a concrete description of the single embedded component of an
irredundant binomial primary decomposition of a PCB ideal in the case
$n\geq 4$ where this ideal is not unmixed.

The paper is organised as follows. In Section~\ref{endowing} we first
observe that all the rows of $\adj(L)$, the adjoint matrix of $L$, are
equal and lie in $\bn^n$. Then we define the integer vector
$m(I)\in\bn^{n}$ associated to a PCB binomial $I$ as the last row of
$\adj(L)$ (see Definition~\ref{defm(I)}). We see that this definition
extends the one given in \cite[Definition~7.1 and
  Remark~7.2]{op2}. Moreover, this vector $m(I)$ helps define a
grading on $A$ in which $I$ becomes homogeneous.

In Section~\ref{first}, we recover and extend properties of HN ideals,
namely we show in Propositions~\ref{gradeI} and \ref{aci} that a PCB
ideal $I$ is contained in a unique Herzog ideal, specifically
$\fp_{m(I)}$, and that, if $n\geq 3$, $I$ is an almost complete
intersection. Section~\ref{onthe} is devoted to study the
(un)mixedness property of PCB ideals. The result, stated above, is
surprising: while for $n\leq 3$, $I$ is unmixed and $I=\hull(I)$, for
$n\geq 4$ we find that $I$ is never unmixed (see
Remark~\ref{unmixedifn=3} and Proposition~\ref{mixedifn>3}). We also
provide an explicit, concrete description of $\hull(I)$ and, when
$n\geq 4$ (in which case $I$ is never unmixed), of a choice for the
irredundant embedded component of $I$, each of these descriptions
being independent of the characteristic of $k$
(cf. Proposition~\ref{explicitsi} and
Theorem~\ref{embeddedcomp}). This gives a comprehensive and concrete
solution to \cite[Problem~6.3]{es} in the case of our binomial ideals.

In Section~\ref{ashort}, we review the normal decomposition of an
integer matrix (also called the Smith Normal Form). This will lead, on
the one hand, to a change of variables that will greatly simplify the
description of $I$. On the other hand, it relates the greatest common
divisor of $m(I)$ (i.e., $d$, the greatest common divisor of the
entries of $\adj(L)$) with the cardinality of the torsion group of the
abelian group generated by the columns of $L$
(Proposition~\ref{torsion}). 

In Section~\ref{applying} we pass to the Laurent polynomial ring,
apply the change of variables given by the normal decomposition of
$L$, get a better description of $I$ in the Laurent ring, and then
contract back to the original polynomial ring
(cf. Theorem~\ref{dcompofSI}). This approach also enables us in
Corollary~\ref{S(I)prime} to characterize when $\hull(I)$ is prime. In
turn, we can use Corollary~\ref{S(I)prime} to show that the class of
PCB ideals has minimal overlap with the class of binomial ideals,
namely so-called lattice basis ideals for saturated lattices,
considered by Ho\c{s}ten and Shapiro in \cite{hs} (cf.
Proposition~\ref{PCBvshs}).

Finally, in the last section, we use the expression obtained in
Theorem~\ref{dcompofSI} to prove the main result of the paper:
Theorem~\ref{main}. We end by giving some illustrative examples.

Throughout the paper we will use the following notations:
$A=k[\underline{x}]=k[x_1,\ldots ,x_n]$ will be the polynomial ring in
$n$ variables $\underline{x}=x_1,\ldots ,x_n$ over a field $k$, $n\geq
2$. The maximal ideal generated by $\underline{x}$ will be denoted
$\fm=(\underline{x})$. The multiplicatively closed set in $A$
generated by $x=x_1\cdots x_n$, the product of the variables
$x_1,\ldots ,x_n$, will be denoted by $S$, and
$B=S^{-1}A=k[\underline{x}^{\pm}]=k[x_1,\ldots ,x_n,x_1^{-1},\ldots
  ,x_n^{-1}]$ will be the corresponding {\em Laurent polynomial ring}.

We will use the following multi-index notation: for
$\alpha=(\alpha_1,\ldots ,\alpha_n)\in\bz^{n}$, or more generally,
$\alpha$ a row or a column of a matrix with ordered entries
$\alpha_1,\ldots ,\alpha_n\in\bz$, set
$x^{\alpha}=x_1^{\alpha_1}\cdots x_n^{\alpha_{n}}$ in $B$.  Given such
an $\alpha=(\alpha_1,\ldots ,\alpha_n)\in\bz^{n}$, let
$\alpha_{+}=\max(\alpha,0)\in \bn_0$ and $\alpha_{-}=-\min(\alpha,0)\in
\bn_0$, where $\bn_0:=\bn\cup\{0\}$, so that
$\alpha=\alpha_{+}-\alpha_{-}$.

By a binomial in $A$ we understand a polynomial of $A$ with at most
two terms, say $\lambda x^\alpha-\mu x^\beta$, where $\lambda, \mu\in
k$ and $\alpha, \beta\in\bn^{n}_{0}$. A binomial ideal of $A$ is an
ideal of $A$ generated by binomials.

Unless stated otherwise, $L$ will always be a PCB matrix, i.e., an
$n\times n$ integer matrix defined as above,
$\underline{f}=f_{1},\ldots,f_{n}$ will be the binomials defined by
the columns of $L$ and $I=(\underline{f})$ will the PCB ideal of $A$
associated to $L$.

Given an $n\times s$ integer matrix $M$, we will denote by $m_{i,*}$
and $m_{*,j}$ its $i$-th row and $j$-th column, respectively. Then
$f_{m_{*,j}}=x^{(m_{*,j})_{+}}-x^{(m_{*,j})_{-}}$ will denote the
binomial defined by the $j$-th column of $M$. The ideal
$I(M)=(f_{m_{*,j}}\mid j=1,\ldots ,s)$, generated by the binomials
$f_{m_{*,j}}$, will be called the binomial ideal associated to the
matrix $M$. For instance, a PCB ideal $I$ is the binomial ideal
$I=I(L)$ associated to a PCB matrix $L$.

For an $n\times s$ integer matrix $M$, we will denote by $\bm\subset
\bz^n$ to the subgroup spanned by the columns of $M$ ($\bm$ is often
called a lattice of $\bz^n$). In other words, $\bm=\bz m_{*,1}+\ldots
+\bz m_{*,s}=\varphi(\bz^s)\subseteq \bz^n$, where
$\varphi:\bz^{s}\to\bz^{n}$ is the homomorphism defined by the matrix
$M$. The binomial ideal $I(\bm)=(x^{m_{+}}-x^{m_{-}}\mid m\in \bm)$ is
usually called the lattice ideal of $A$ associated to $\bm$ (see,
e.g., \cite[Definition~7.2]{ms} or alternatively \cite[just before
  Corollary~2.5]{es}, where $I(\bm)$ is denoted by $I_{+}(\rho)$,
$\rho:\bm\to k^{*}$ being the trivial partial character on the lattice
$\bm$; see also \cite[Corollary~7.1.4]{villarreal}).

By an $n\times n$ invertible integer matrix, we will
understand an $n\times n$ matrix $P$ with entries in $\bz$ whose
determinant is $\pm 1$. Thus its inverse matrix $P^{-1}$ is also an
integer matrix.

\section{Endowing $A$ with a grading that makes $I$ 
homogeneous}\label{endowing}

Let $L_{i,j}$ be the $(i,j)$-cofactor of an $n\times n$ PCB matrix
$L$, i.e., the $(n-1)\times (n-1)$ matrix obtained from $L$ by
eliminating the $i$-th row and the $j$-th column of $L$. Let
$h_{i,j}=(-1)^{i+j}\det(L_{j,i})$ and set $H=(h_{i,j})=\adj(L)$, the
adjoint matrix of $L$. In the next result, all the computations are
thought of in $\bz$ or $\bq$ (i.e., in characteristic zero) and the
ranks are taken over $\bq$.

\begin{lemma}\label{rankofL} 
With the notations above:
\begin{itemize}
\item[$(a)$] $\det(L_{i,i})>0$, for all $i=1,\ldots ,n$. In
  particular, $\rank(L)=n-1$;
\item[$(b)$] $\det(L_{i,n})=(-1)^{n-i}\det(L_{i,i})$, for all
  $i=1,\ldots ,n$;
\item[$(c)$] $\det(L_{i,j})=(-1)^{n-j}\det(L_{i,n})$, for all
  $i,j=1,\ldots ,n$.
\end{itemize}
Moreover,
\begin{itemize}
\item[$(d)$] $h_{i,j}>0$, for all $i,j=1,\ldots ,n$;
\item[$(e)$] $h_{i,*}=h_{n,*}$, for all $i=1,\ldots,n$. In particular
  $\rank(\adj(L))=1$;
\item[$(f)$] $\nulls(L^{\top})$ is generated as a $\bq$-linear
  subspace by $h_{n,*}^{\top}$, the transpose of the last row of
  $\adj(L)$.
\end{itemize}
\end{lemma}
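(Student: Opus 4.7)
The plan is to prove (a) using M-matrix theory, derive (b) and (c) from the row-sum-zero identity via multilinearity, combine these to obtain (d) and (e), and conclude (f) through the classical adjoint formula.

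For (a), I would observe that, for each $i$, the submatrix $L_{i,i}$ has positive diagonal entries $a_{j,j}$, non-positive off-diagonal entries $-a_{j,k}$, and the row of $L_{i,i}$ indexed by $j\neq i$ has sum
\[
a_{j,j} - \sum_{k\neq i,\, k\neq j} a_{j,k} = a_{j,i} > 0,
\]
by the PCB relation $a_{j,j} = \sum_{k\neq j} a_{j,k}$. Thus $L_{i,i}$ is a Z-matrix with $L_{i,i}\mathbf{1} > 0$ componentwise, hence a non-singular M-matrix with positive determinant. This gives $\rank(L)\geq n-1$; combined with the fact that $\mathbf{1}$ lies in the right kernel of $L$ (rows sum to zero), $\rank(L) = n-1$ follows.

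For (b) and (c), I would work with $L^{(i)}$, the $(n-1)\times n$ matrix obtained by deleting the $i$-th row of $L$, with columns $c_1,\ldots,c_n$. The row-sum-zero condition on $L$ gives $c_1+\ldots+c_n = 0$ in $L^{(i)}$. For (c), substituting $c_n = -(c_1+\ldots+c_{n-1})$ into $\det(L_{i,j}) = \det(c_1,\ldots,\widehat{c_j},\ldots,c_{n-1},c_n)$ and using multilinearity together with the vanishing of determinants with repeated columns, only the $c_j$ term survives, yielding $-\det(c_1,\ldots,\widehat{c_j},\ldots,c_{n-1},c_j)$. Moving $c_j$ back to position $j$ requires $n-1-j$ adjacent transpositions, producing the sign $(-1)^{n-j}$ claimed. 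Part (b) is then the special case $j=i$ of (c) (after sign inversion).

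Parts (d) and (e) would follow by direct composition: using (c) and (b),
\[
h_{i,j} = (-1)^{i+j}\det(L_{j,i}) = (-1)^{i+j}(-1)^{n-i}(-1)^{n-j}\det(L_{j,j}) = \det(L_{j,j}),
\]
which is positive by (a) and, crucially, independent of $i$. Hence $h_{i,*} = h_{n,*}$ for all $i$, giving $\rank(\adj(L)) = 1$. For (f), the identity $\adj(L)\cdot L = \det(L)I_n = 0$ gives, on transposition, $L^{\top}\cdot \adj(L)^{\top} = 0$, so each transposed row of $\adj(L)$ lies in $\nulls(L^{\top})$; since $\rank(L^{\top}) = n-1$ makes this nullspace one-dimensional and the positivity from (d) ensures $h_{n,*}^{\top} \neq 0$, it generates $\nulls(L^{\top})$.

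The only non-formal step is (a): establishing $\det(L_{i,i}) > 0$. Its proof rests on the single algebraic observation that the row sums of $L_{i,i}$ are precisely the entries $a_{j,i}$ of the deleted column, together with the standard criterion that a Z-matrix with strictly positive row sums is a non-singular M-matrix. Once (a) is in place, parts (b)--(f) reduce to sign bookkeeping and the classical adjoint identity.
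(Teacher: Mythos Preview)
Your proof is correct and follows essentially the same line as the paper's. The only notable difference is in part (a): the paper invokes the Gershgorin Circle Theorem to show that every eigenvalue of the strictly diagonally dominant matrix $L_{i,i}$ has positive real part (and hence that $\det(L_{i,i})>0$), whereas you appeal to the M-matrix criterion that a Z-matrix with strictly positive row sums is non-singular with positive determinant. These are two standard and closely related routes to the same fact about strictly diagonally dominant matrices; indeed the paper itself remarks that (a) also follows by an elementary induction on the number of rows. For parts (b)--(f) your multilinearity argument and the paper's explicit column manipulations are the same computation in slightly different language, and the derivations of (d), (e), (f) coincide verbatim.
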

\begin{proof} 
The proof of $(a)$ follows easily from standard facts about so-called
strictly diagonally dominant matrices (cf., e.g., an easy adaptation
of the statement and proof of \cite[Bemerkung 6.1;
pp.~37-38]{gastinger} where one employs induction based on the number
of rows, using row reduction). We present here another proof based on
a general fact about the eigenvalues of such matrices.  Fix $i\in
\{1,\ldots ,n\}$.  By the Gershgorin Circle Theorem, every (possibly
complex) eigenvalue $\lambda$ of $L_{i,i}$ lies within at least one of
the discs $\{z\in\bc\mid |z-a_{j,j}|\leq R_{j}\}$, $j\neq i$, where
$R_j=\sum_{u\neq i,j}|-a_{j,u}|<a_{j,j}$ since $L_{i,i}$ is a strictly
diagonally dominant matrix. If $\lambda\in\br$, then $\lambda>0$. If
$\lambda\not\in\br$, then since $L_{i,i}$ is a real matrix, its conjugate
$\overline{\lambda}$ must also be an eigenvalue of $L_{i,i}$. By means
of the Jordan canonical form of $L_{i,i}$, one deduces that
$\det(L_{i,i})>0$. Clearly $(1,\ldots ,1)^{\top}$ is in the nullspace
of $L$ and $(a)$ holds.

Fix $i\in \{1,\ldots ,n-1\}$. By performing $n-1-i$ permutations, the
$i$-th column of $L_{i,n}$ may be taken to the outer right hand
side. Add to this new right hand column the sum of the other columns
and change the sign. Using that the sum of the entries of each row of
$L$ is zero, one gets in the outer right hand column the $n$-th column
of $L_{i,i}$.  Therefore $\det(L_{i,n})=(-1)^{n-i}\det(L_{i,i})$. This
proves $(b)$.

Let $j\in \{1,\ldots ,n-1\}$. Since the sum of the entries of each row
of $L$ is zero, to calculate $\det(L_{i,j})$ one can substitute the
last column of $L_{i,j}$ by the corresponding $j$-th column of
$L_{i,n}$ with the sign changed. By performing $n-1-j$ permutations,
one gets the matrix $L_{i,n}$. Therefore
$\det(L_{i,j})=(-1)^{n-j}\det(L_{i,n})$. This proves $(c)$.

For $i,j=1,\ldots,n$, using $(c)$, $(b)$ and $(a)$, respectively, we
have
\begin{eqnarray*}
h_{i,j}=(-1)^{i+j}\det(L_{j,i})=(-1)^{i+j+n-i}\det(L_{j,n})=
(-1)^{i+j+n-i+n-j}\det(L_{j,j})=\det(L_{j,j}). 
\end{eqnarray*}
This proves $(d)$.

For $j\in \{1,\ldots ,n-1\}$, using $(c)$,
\begin{eqnarray*}
h_{i,j}=(-1)^{i+j}\det(L_{j,i})=(-1)^{i+j}(-1)^{n-i}\det(L_{j,n})=
(-1)^{n+j}\det(L_{j,n})=h_{n,j}. 
\end{eqnarray*}
Therefore all the rows of $\adj(L)$
are equal. In particular, since $\adj(L)\neq 0$ by $(d)$, we see that
$\rank(\adj(L))=1$. This proves $(e)$.

Since $\rank(L^{\top})=\rank(L)=n-1$, we have that
$\dim\nulls(L^{\top})=1$. Furthermore, since $\adj(L)L=0$, the
transpose of the (non-zero) last row of $\adj(L)$ generates the
$\bq$-linear subspace $\nulls(L^{\top})$.
\end{proof}

As before, set $A=k[\underline{x}]$, the polynomial ring in $n$
variables $\underline{x}=x_1,\ldots ,x_n$ over a field $k$, $n\geq 2$.

\begin{definition}\label{defm(I)}
{\rm Let $I=(\underline{f})$ be the PCB ideal associated to $L$. Let
  $m=m(I)=(m_1,\ldots,m_n)$ be the $n$-th row of $\adj(L)$; this will
  be called the {\em integer vector associated to} $I$. By the
  previous lemma, $m(I)\in\bn^{n}$ and $m(I)^{\top}$ is a basis of the
  $\bq$-linear subspace $\nulls(L^{\top})$. We will denote by $d$ the
  greatest common divisor of the coefficients of $m(I)$,
  $d:=\gcd(m(I))$, and set $\nu(I)=m(I)/d=(\nu_1,\ldots
  ,\nu_n)\in\bn^{n}$. From now on, given a PCB ideal $I$ of $A$, we
  will endow $A$ with the natural grading induced by giving $x_{i}$
  weight $\nu_{i}$. Then $A$ is graded by $\bn_{0}:=\bn\cup\{0\}$, and
  $\underline{x}$ and $\underline{f}$ are homogeneous elements of
  positive degree. In particular, $I$ is homogeneous. Hence so are its
  isolated primary components and its associated primes, and an
  irredundant embedded primary component may be chosen homogeneous
  (see, e.g., \cite[Ch.~VII,\S~2, Theorem~9 and Corollary,
  pp.153-154]{zs}).  }\end{definition}

\begin{remark}\label{casen=2}
{\rm For $n=2$ we have $I=(f_1,f_2)$, where
  $f_1=x_1^{a_{1,1}}-x_2^{a_{2,1}}$ and
  $f_2=x_2^{a_{2,2}}-x_1^{a_{1,2}}$, with $a_{1,1}=a_{1,2}$ and
  $a_{2,2}=a_{2,1}$. Thus $f_2=-f_1$ and $I=(f_1)$ is a complete
  intersection. In particular, $I$ is unmixed. Here,
  $m(I)=(a_{2,2},a_{1,1})\in\bn^{2}$ and
  $d=\gcd(m(I))=\gcd(a_{1,1},a_{2,2})$.  }\end{remark}

\begin{remark}\label{casen=3}
{\rm For $n=3$ we have $I=(f_1,f_2,f_3)$, where
  $f_1=x_1^{a_{1,1}}-x_2^{a_{2,1}}x_3^{a_{3,1}}$,
  $f_2=x_2^{a_{2,2}}-x_1^{a_{1,2}}x_3^{a_{3,2}}$ and
  $f_3=x_3^{a_{3,3}}-x_1^{a_{1,3}}x_2^{a_{2,3}}$, with
  $a_{1,1}=a_{1,2}+a_{1,3}$, $a_{2,2}=a_{2,1}+a_{2,3}$ and
  $a_{3,3}=a_{3,1}+a_{3,2}$. Observe that $f_1,f_2,f_3$ are, up to
  sign, the $2\times 2$ minors of the matrix
\begin{eqnarray*}
\left(\begin{array}{ccc}x_1^{a_{1,2}}&x_2^{a_{2,3}}&x_3^{a_{3,1}}
  \\x_2^{a_{2,1}}&x_3^{a_{3,2}}&x_1^{a_{1,3}}\end{array}\right).
\end{eqnarray*}
It follows that when $n=3$, PCB ideals are precisely the ideals of
Herzog-Northcott type, or HN ideals for short, considered in
\cite{op2}. In the proof of \cite[Remark~4.4]{op2}, there appear
positive integers $m_1,m_2,m_3$ presented as the $2\times 2$ minors of
the matrix defining the exponents of $f_1$ and $f_2$. Subsequently, in
\cite[Definition~7.1 and Remark~7.2]{op2}, $(m_1,m_2,m_3)$ is defined
as the integer vector associated to the Herzog-Northcott ideal $I$. In
conclusion, one can easily check that, when $n=3$, the present
definition of $m(I)$ coincides with the one given in
\cite[Definition~7.1 and Remark~7.2]{op2}.  }\end{remark}

\begin{remark}\label{casen=4}
{\rm It is a long-standing open problem to find a minimal generating
  set for the defining ideals $\fp_m$ of monomial curves $\Gamma_m$,
  $m\in\bn^n$, $\gcd(m)=1$, and to decide whether the $\fp_m$ are set
  theoretically complete intersections. For $n=3$, the problem was
  completely solved by Herzog in \cite{herzog}. For $n=4$, and
  provided that $\fp_m$ is an almost complete intersection, Gastinger
  in \cite{gastinger} and Eto in \cite{eto} gave a definitive
  answer. In an attempt to study this problem for $n=4$, Alc\'antar
  and Villarreal defined in \cite{av} what they called a {\em full set
    of critical binomials} as a set of four binomials
  $f_1,f_2,f_3,f_4\in\fp_m$, where $m=(m_1,m_2,m_3,m_4)\in\bn^4$,
  $m_1<m_2<m_3<m_4$ and $\gcd(m)=1$. The $f_i$ were respectively
  defined as in our introduction, namely
\begin{eqnarray*}
x_1^{a_{1,1}}-x_2^{a_{2,1}}x_3^{a_{3,1}}x_4^{a_{4,1}},
x_2^{a_{2,2}}-x_1^{a_{1,2}}x_3^{a_{3,2}}x_4^{a_{4,2}},
x_3^{a_{3,3}}-x_1^{a_{1,3}}x_2^{a_{2,3}}x_4^{a_{4,3}},
x_4^{a_{4,4}}-x_1^{a_{1,4}}x_2^{a_{2,4}}x_3^{a_{3,4}},
\end{eqnarray*}
but with $a_{i,i}>0$ and $a_{i,j}\in\bn_0$, and such that $a_{i,i}$ is
minimal with respect to the condition $a_{i,i}m_i\in \sum_{j\neq
  i}m_j\bn_0$. They then studied when the ideal generated by the $f_i$
is the whole of $\fp_m$. As is clear, our definition of PCB ideal for
$n=4$ does not exactly match their definition. On the one hand, we do
not allow zero exponents, and on the other hand we do not ask for
the above minimal condition or for restrictions on the $m_i$.
}\end{remark}

\section{First properties of PCB ideals}\label{first}

Set $A=k[\underline{x}]$ to be the polynomial ring in $n$ variables
$\underline{x}=x_1,\ldots ,x_n$ over a field $k$, $n\geq 2$. We start
this section by recovering a definition from \cite{op2}.

\begin{definition}\label{herzogideal}
{\rm Let $u=(u_{1},\ldots ,u_{n})\in\bn^{n}$ be an integer vector with
  greatest common divisor not necessarily equal to 1. The {\em Herzog
    ideal associated to $u$} is the prime ideal $\fp_{u}$ defined as
  the kernel of the natural homomorphism $\varphi_{u}:A\rightarrow
  k[t]$ that sends $x_{i}$ to $t^{u_{i}}$, for each $i=1,\ldots ,n$.
}\end{definition}

The following is a list of well-known properties of Herzog ideals,
with a sketched proof for the sake of completeness.

\begin{remark}\label{herzogproperties} {\rm 
Let $u\in\bn^{n}$. The extension $k[t^{u_1},\ldots ,t^{u_n}]\subset
k[t]$ is integral. Hence $A/\fp_{u}\cong k[t^{u_1},\ldots,t^{u_n}]$
has Krull dimension $1$ and $\fp_{u}$ is a prime ideal of height
$n-1$. Since $0\in V(\fp_u)\subseteq \ba^{n}_k$, where $V(\fp_u)$
denotes the affine set of zeros over $k$ of $\fp_u$,
$\fm=I(\{0\})\supseteq I(V(\fp_u))\supseteq \fp_u$ and
$\fp_u\subsetneq\fm$. Moreover, if $v\in\bn^{n}$ is such that $u=dv$
for some $d\in \bn$, clearly $\fp_u\supseteq \fp_v$ and, by the
equality of heights, $\fp_{u}=\fp_{v}$.

We claim that if $\gcd(u)=1$, then $V(\fp_u)=\Gamma_u:=
\{(\lambda^{u_1},\ldots ,\lambda^{u_n})\in\ba^{n}_{k}\mid \lambda\in
k\}$ (see \cite[Proposition~2.9]{rvz}). Clearly
$\Gamma_u\subseteq V(\fp_u)$. Note that for $i=2,\ldots,n$,
$x_1^{u_i}-x_i^{u_1}$ is in $\fp_u$.  Hence if
$(\lambda_1,\ldots,\lambda_n)\in V(\fp_u)\setminus\{0\}$, then each
$\lambda_i\neq 0$ and, taking $\lambda:=\lambda_1^{c_1}\cdots
\lambda_n^{c_n}$ where $c_1u_1+\ldots +c_nu_n=1$ with $c_i\in\bz$, one
has $\lambda^{u_i}=\lambda_i$ and hence
$(\lambda_1,\ldots,\lambda_n)\in \Gamma_u$.

Moreover, $\fp_u=(x_1-t^{u_1},\ldots ,x_n-t^{u_n})\cap A$, where the
ideal $(x_1-t^{u_1},\ldots ,x_n-t^{u_n})$ is considered in
$A[t]=k[x_1,\ldots ,x_n,t]$. Indeed, if $f\in\fp_u$,
\begin{eqnarray*}
&&f=\sum a_{\alpha}x^{\alpha}=\sum
  a_{\alpha}(x_1-t^{u_1}+t^{u_1})^{\alpha_1}\cdots
  (x_n-t^{u_n}+t^{u_n})^{\alpha_n}=\\ && g+\sum
  a_{\alpha}(t^{u_1})^{\alpha_1}\cdots
  (t^{u_n})^{\alpha_n}=g+f(t^{u_1},\ldots ,t^{u_n})=g+\varphi_u(f)=g,
\end{eqnarray*}
where $g\in (x_1-t^{u_1},\ldots ,x_n-t^{u_n})$. Thus, $f=g\in
(x_1-t^{u_1},\ldots ,x_n-t^{u_n})\cap A$. The other inclusion follows
easily. In particular, by \cite[Corollary~1.3]{es}, $\fp_u$ is a
binomial ideal.

Finally, if $k$ is infinite and $\gcd(u)=1$, we claim that
$\fp_u=I(\Gamma_u)$, the vanishing ideal of $\Gamma_u$. On the one
hand, since $\Gamma_u=V(\fp_u)$, $I(\Gamma_u)=I(V(\fp_u))\supseteq
\fp_u$.  On the other hand, let $f\in I(\Gamma_u)\subset A\subset
A[t]$. The argument above shows that
$f(\underline{x})=g(\underline{x},t)+r(t)$, with $g\in
(x_1-t^{u_1},\ldots ,x_n-t^{u_n})\subset A[t]$ and $r\in k[t]$. For
any $\lambda\in k$, evaluate $x_i$ in $\lambda^{u_i}$ and $t$ in
$\lambda$. Then $0=f(\lambda^{u_1},\ldots
,\lambda^{u_n})=g(\lambda^{u_1},\ldots ,\lambda^{u_n},\lambda )+
r(\lambda)=r(\lambda)$. Thus $r(\lambda)=0$ for all $\lambda\in
k$. Since $k$ is infinite, $r=0$ and
$f(\underline{x})=g(\underline{x},t)\in (x_1-t^{u_1},\ldots
,x_n-t^{u_n})\cap A=\fp_u$.  }\end{remark}

The next result gives us the first properties of a PCB ideal.

\begin{proposition}\label{gradeI}
Let $I=(\underline{f})$ be the PCB ideal associated to $L$. Then the
following hold.
\begin{itemize}
\item[$(a)$] Any subset of $n-1$ elements of $\underline{f}$ is a
  regular sequence in $A$.
\item[$(b)$] $\fp_{m(I)}$ is the unique Herzog ideal containing $I$
  and is a minimal prime over $I$. In particular, $\height(I)=n-1$.
\item[$(c)$] If $n=2$, $I$ is principal. If $n\geq 3$, $f_1,\ldots
  ,f_n$ is a minimal (homogeneous) system of generators of $I$ and
  every (non-necessarily homogeneous) system of generators of $I$ has
  at least $n$ elements.
\end{itemize}
\end{proposition}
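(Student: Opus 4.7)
My plan splits along the three parts, exploiting modular reductions of the generators against monomial ideals together with Cohen-Macaulayness of $A$ and the nullspace structure in Lemma~\ref{rankofL}$(f)$.

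For $(a)$, I fix $j$ and consider $J_j = (f_i : i \neq j) \subseteq A$. To bound $\height(J_j) \geq n-1$, I reduce modulo $x_j$: since the off-diagonal exponent $a_{j,i}$ of $x_j$ in the monomial part of each $f_i$ ($i \neq j$) is positive, $f_i \equiv x_i^{a_{i,i}} \pmod{x_j}$, so $J_j + (x_j) \supseteq (x_j, x_i^{a_{i,i}} : i \neq j)$, an $\fm$-primary ideal of height $n$. The reverse inequality is automatic since $J_j$ is generated by $n-1$ elements. Thus $\height(J_j) = n-1$, and because $A$ is Cohen-Macaulay with $J_j \subseteq \fm$ and the $f_i$ homogeneous of positive degree (Definition~\ref{defm(I)}), these $n-1$ generators form a partial system of parameters in $A_\fm$, hence a regular sequence in any order.

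For $(b)$, writing the columns of $L$ as $c_1, \ldots, c_n$, each $f_i = x^{(c_i)_{+}} - x^{(c_i)_{-}}$ is killed by $\varphi_{m(I)}$ iff $m(I) \cdot c_i = 0$ for all $i$, i.e., iff $m(I) \cdot L = 0$; this is precisely Lemma~\ref{rankofL}$(f)$. So $I \subseteq \fp_{m(I)}$, and combining with $\height(\fp_{m(I)}) = n-1$ (Remark~\ref{herzogproperties}) and $\height(I) \geq n-1$ from $(a)$, $\fp_{m(I)}$ is a minimal prime over $I$ and $\height(I) = n-1$. Uniqueness follows because any Herzog ideal $\fp_u$ containing $I$ forces $u^{\top}$ into the one-dimensional space $\nulls(L^{\top}) = \bq \cdot m(I)^{\top}$; hence both $u$ and $m(I)$ are positive integer multiples of $\nu(I)$, and Remark~\ref{herzogproperties} collapses both to $\fp_{\nu(I)}$.

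For $(c)$, the case $n = 2$ is Remark~\ref{casen=2}. For $n \geq 3$, to see that $\{f_1, \ldots, f_n\}$ is a minimal homogeneous generating set, I show $f_i \notin (f_j : j \neq i)$ by reducing modulo $(x_l : l \neq i)$: for each $j \neq i$ the pure term $x_j^{a_{j,j}}$ vanishes and, because $n \geq 3$, the cross term $\prod_{l \neq j} x_l^{a_{l,j}}$ contains some $x_l$ with $l \notin \{i, j\}$ and $a_{l,j} > 0$, so $f_j \equiv 0$; meanwhile $f_i$ reduces to $x_i^{a_{i,i}} \neq 0$ in $k[x_i]$. For an arbitrary (not necessarily homogeneous) generating set, localizing at $\fm$ and applying Nakayama gives the same lower bound, since the images of the generators must $k$-span $I/\fm I$, whose dimension equals the number of minimal homogeneous generators. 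The main obstacle throughout is packaging these combinatorial modular reductions so that they yield the desired algebraic conclusions cleanly; once the height computations are in place, Cohen-Macaulayness of $A$ and the one-dimensional $\nulls(L^{\top})$ do the rest of the work.
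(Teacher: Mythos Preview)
Your proof is correct and follows essentially the same approach as the paper's: both hinge on the observation that adding $x_j$ to $(f_i : i\neq j)$ yields the $\fm$-primary monomial ideal $(x_j, x_i^{a_{i,i}} : i\neq j)$, then invoke Cohen--Macaulayness (you via height, the paper via grade) to get the regular sequence; parts $(b)$ and $(c)$ are likewise argued identically in substance, using the one-dimensionality of $\nulls(L^\top)$ and the same modular reductions setting variables to zero.
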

\begin{proof}
Since $(f_1,\ldots ,f_{n-1},x_n)=(x_1^{a_{1,1}},\ldots
,x_{n-1}^{a_{n-1,n-1}},x_n)$, the grades of these ideals are equal and
coincide with $\grade(x_1,\ldots ,x_n)=n$ (see, e.g.,
\cite[Exercise~3.1.12(c)]{kaplansky}). Using that $A$ is graded and
that $f_1,\ldots ,f_{n-1},x_n$ are homogeneous, we deduce that these
elements form a regular sequence in any order (see, e.g.,
\cite[Theorem~4.1]{op2}) (and similarly for the possible variations on
this argument). This proves $(a)$.

Given $v\in\bn^{n}$, clearly $I\subseteq \fp_{v}$ if and only if $v$
satisfies the system of equations $vL=0$, i.e., if and only if
$v^{\top}$ is in the nullspace of $L^{\top}$, which by
Lemma~\ref{rankofL} and Definition~\ref{defm(I)} is the $\bq$-linear
subspace generated by $m(I)^{\top}$. Therefore $I\subseteq
\fp_{m(I)}$.  Since $n-1\leq \grade(I)=\height(I)\leq
\height(\fp_{m(I)})=n-1$, $\fp_{m(I)}$ is a minimal prime over $I$ and
$\height(I)=n-1$.  On the other hand, if $I\subseteq \fp_{v}$, for
some $v\in\bn^{n}$, then $vL=0$ and $rv=sm(I)$, with
$r,s\in\bn$. Hence $\fp_{v}=\fp_{rv}=\fp_{sm(I)}=\fp_{m(I)}$.

Suppose that $n\geq 3$. We see first that $f_1,\ldots ,f_n$ is a
minimal homogeneous system of generators of $I$ in the sense that none
of them is irredundant. For, if $f_n$ were redundant, say, since
$n\geq 3$, $I=(f_1,\ldots ,f_{n-1})\subseteq (x_1,\ldots ,x_{n-1})$
and $f_n=g_1x_1+\ldots +g_{n-1}x_{n-1}$, for some $g_i\in
A$. Substituting $x_1,\ldots, x_{n-1}$ by 0 and $x_{n}$ by 1, one
would get a contradiction. By \cite[Proposition~1.5.15]{bh}, every
minimal homogeneous system of generators of $I$ has exactly
$\mu(I_{\mathfrak{m}})$ elements. Hence
$n=\mu(I_{\mathfrak{m}})$. Finally, if $h_1,\ldots ,h_r$ is a minimal
(non-necessarily homogeneous) system of generators of $I$, $h_1,\ldots
,h_r$ certainly lie in $\fm$, and $h_1,\ldots ,h_r$ in
$A_{\mathfrak{m}}$ still generate $I_{\mathfrak{m}}$. Thus
$r\geq\mu(I_{\mathfrak{m}})=n$.
\end{proof}

\begin{remark}\label{explicitrelation}
{\rm Similarly to \cite[Remark~6.2]{op2}, we can show a relation among
  $f_1,\ldots ,f_n$. Concretely, $x^{b(1)}f_1+\ldots
  +x^{b(n)}f_{n}=0$, where the $b(i)\in\bn_{0}^{n}$ are defined as
  follows:
\begin{eqnarray*}
&&b(1)=(0,0,a_{3,3}-a_{3,4}-\ldots -a_{3,n}-a_{3,1},
  a_{4,4}-a_{4,5}-\ldots -a_{4,n}-a_{4,1},\ldots ,a_{n,n}-a_{n,1}),
  \\ &&b(2)=(a_{1,1}-a_{1,2},0,0,a_{4,4}-a_{4,5}-\ldots
  -a_{4,n}-a_{4,1}-a_{4,2},\ldots , a_{n,n}-a_{n,1}-a_{n,2}),
  \\&&b(3)=(a_{1,1}-a_{1,2}-a_{1,3},a_{2,2}-a_{2,3},0,0,\ldots
  ,a_{n,n}-a_{n,1}-a_{n,2}-a_{n,3})\mbox{, }\ldots ,\\ &&
  b(n-1)=(a_{1,1}-a_{1,2}-\ldots -a_{1,n-1},\ldots 
  ,a_{n-2,n-2}-a_{n-2,n-1},0,0)\mbox{ and } \\&&
  b(n)=(0,a_{2,2}-a_{2,3}-\ldots -a_{2,n},a_{3,3}-a_{3,4}-\ldots
  -a_{3,n},\ldots ,a_{n-1,n-1}-a_{n-1,n},0).
\end{eqnarray*}
For instance, when $n=2$, $b(1)=b(2)=(0,0)$ and
$x^{b(1)}f_1+x^{b(2)}f_2=f_1+f_2$, which is certainly zero. For $n=3$,
since the sum of the entries of each row is zero,
$b(1)=(0,0,a_{3,2})$, $b(2)=(a_{1,3},0,0)$ and
$b(3)=(0,a_{2,1},0)$. Thus $x^{b(1)}f_1+x^{b(2)}f_2+x^{b(3)}f_3=
x_3^{a_{3,2}}f_1+x_1^{a_{1,3}}f_2+x_2^{a_{2,1}}f_3=0$, which is (up to
sign) the second syzygy in \cite[Remark~6.2]{op2}. For $n=4$, we have
\begin{eqnarray*}
x_3^{a_{3,2}}x_4^{a_{4,2}+a_{4,3}}f_1+x_4^{a_{4,3}}x_1^{a_{1,3}+a_{1,4}}f_2+
x_1^{a_{1,4}}x_2^{a_{2,4}+a_{2,1}}f_3+x_2^{a_{2,1}}x_3^{a_{3,1}+a_{3,2}}f_4=0.
\end{eqnarray*}
}\end{remark}

With respect to the property of being an almost complete intersection
(in the sense of Herrmann, Moonen and Villamayor \cite{hmv}), we have
a result similar to that of \cite[Proposition~6.3]{op2}.

\begin{proposition}\label{aci}
Let $I=(\underline{f})$ be the PCB ideal associated to $L$. Then the
following hold.
\begin{itemize}
\item[$(a)$] For any associated prime $\fp$ of $I$, either
  $\height(\fp)=n-1$ and $x_{i}\not\in\fp$, for all $i=1,\ldots ,n$,
  or else $\fp=\fm$.
\item[$(b)$] For any minimal prime ideal $\fp$ over $I$,
  $IA_{\mathfrak{p}}$ is a complete intersection.
\item[$(c)$] If $n=2$, $I$ is a complete intersection. If $n\geq 3$,
  $I$ is an almost complete intersection.
\end{itemize}
\end{proposition}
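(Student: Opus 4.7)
The plan is to prove (a) by combining height considerations with a combinatorial argument on the exponents of the $f_i$, to deduce (b) from (a) using the syzygy recorded in Remark~\ref{explicitrelation}, and finally to obtain (c) as a formal consequence of (b) and Proposition~\ref{gradeI}.

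For (a), I would first note that every associated prime $\fp$ of $A/I$ contains a minimal prime over $I$; by Proposition~\ref{gradeI}$(b)$ the minimal primes have height $n-1$, and since $\dim A = n$, necessarily $\height(\fp) \in \{n-1, n\}$. If $\height(\fp) = n$ then $\fp = \fm$. Assume $\height(\fp) = n-1$ and set $T = \{i : x_i \in \fp\}$. I claim that if $T$ is nonempty then $T = \{1,\ldots,n\}$. Indeed, if some $j_0 \notin T$ existed, then $f_{j_0} = x_{j_0}^{a_{j_0,j_0}} - \prod_{l \neq j_0} x_l^{a_{l,j_0}} \in \fp$, and since every off-diagonal exponent $a_{l,j_0}$ is strictly positive (this is the PCB hypothesis), the second monomial lies in $(x_l : l \in T) \subseteq \fp$, forcing $x_{j_0}^{a_{j_0,j_0}} \in \fp$ and hence $x_{j_0} \in \fp$, a contradiction. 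Thus $T = \{1,\ldots,n\}$, giving $\fp = \fm$ and contradicting $\height(\fp) = n-1$; hence $T = \emptyset$.

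For (b), let $\fp$ be a minimal prime over $I$; by (a) no $x_i$ lies in $\fp$, so every monomial $x^{b(i)}$ appearing in the syzygy of Remark~\ref{explicitrelation} is a unit in $A_\fp$. The relation $x^{b(1)} f_1 + \cdots + x^{b(n)} f_n = 0$ therefore allows us to express $f_n$ in $A_\fp$ as an $A_\fp$-linear combination of $f_1,\ldots,f_{n-1}$, so $IA_\fp = (f_1,\ldots,f_{n-1}) A_\fp$. By Proposition~\ref{gradeI}$(a)$, $f_1,\ldots,f_{n-1}$ is a regular sequence in $A$, hence also in $A_\fp$, so $IA_\fp$ is a complete intersection of height $n-1$. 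Part (c) then follows at once: when $n=2$, Remark~\ref{casen=2} gives $I = (f_1)$, a principal ideal of height $1$, hence a complete intersection; when $n \geq 3$, Proposition~\ref{gradeI}$(c)$ yields $\mu(I) = n = \height(I) + 1$, while (b) supplies the local complete intersection property at every minimal prime, which together are the defining conditions for an almost complete intersection in the sense of \cite{hmv}.

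The main delicate point is the combinatorial step in (a): the fact that all off-diagonal entries $a_{l,j}$ of the PCB matrix $L$ are strictly positive is precisely what prevents a nonempty proper subset of variables from being contained in a height-$(n-1)$ associated prime. The remaining ingredients — the regular sequence property, the explicit syzygy, and the minimal-generator count — are already in place from the previous proposition, so the rest of the argument is essentially bookkeeping.
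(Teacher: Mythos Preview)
Your overall strategy matches the paper's, and parts (b) and (c) are essentially identical to the original proof. There is, however, one genuine slip in part (a): you assert that if $\height(\fp)=n$ then $\fp=\fm$, but the polynomial ring $A=k[x_1,\ldots,x_n]$ has many maximal ideals, so this step is unjustified as written. The paper handles this by invoking the grading set up in Definition~\ref{defm(I)}: since $I$ is homogeneous, every associated prime $\fp$ is homogeneous and therefore contained in $\fm$; once $\fp\subseteq\fm$, the dichotomy $\height(\fp)\in\{n-1,n\}$ immediately gives either $\height(\fp)=n-1$ or $\fp=\fm$.

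Once that single line is inserted, your combinatorial argument (if some $x_i\in\fp$ then, via $f_j\in\fp$ and the positivity of the off-diagonal exponents, every $x_j\in\fp$) is exactly the content of the paper's terse phrase ``otherwise $(\underline{f},x_i)\subseteq\fp$ and $\fp=\fm$,'' just unpacked more explicitly. So apart from the missing appeal to homogeneity, your proof is correct and follows the same route as the paper.
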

\begin{proof}
Let $\fp$ an associated prime of $I$. Since $I$ is homogeneous, $\fp$
is homogeneous too and hence $\fp\subseteq \fm$ (see, e.g.,
\cite[\S~1.5]{bh}). If $\fp\subsetneq \fm$, since $\height(I)=n-1$,
then $\height(\fp)=n-1$ too. Moreover, for each $i$, $x_i\not\in\fp$,
otherwise $(\underline{f},x_i)\subseteq \fp$ and $\fp=\fm$.

Let $\fp$ be a minimal prime over $I$, so in particular $\fp\neq \fm$
(because $I\subseteq \fp_{m(I)}\subsetneq \fm$). Thus $x_i\not\in\fp$,
for all $i=1,\ldots ,n$. Using Remark~\ref{explicitrelation}, and with
$x=x_1\cdots x_n$ as before, $IA_{x}=(f_1,\ldots ,f_{n-1})A_{x}$ and
$IA_{\mathfrak{p}}=(IA_{x})A_{\mathfrak{p}}=(f_{1},\ldots
,f_{n-1})A_{\mathfrak{p}}$, where $f_{1},\ldots ,f_{n-1}$ is a regular
sequence in $A_{\mathfrak{p}}$.

Finally, if $n=2$, $I$ is a complete intersection (cf.
Remark~\ref{casen=2}). If $n\geq 3$, by Proposition~\ref{gradeI}$(a)$,
$(c)$, $I$ has height $n-1$ and is minimally generated by $n$
elements. Since $I$ is locally a complete intersection at its minimal
primes, $I$ is an almost complete intersection.
\end{proof}

\section{On the (un)mixedness property of PCB ideals}\label{onthe}

Let $S$ be the multiplicatively closed set in $A=k[\underline{x}]$
generated by $x=x_1\cdots x_n$. Let
$B=S^{-1}A=k[\underline{x}^{\pm}]=k[x_1,\ldots ,x_n,x_1^{-1},\ldots
  ,x_n^{-1}]$ be the Laurent polynomial ring. As usual, if $I$ is an
ideal of $A$, $IB$ will denote its extension in $B$, and, if $J$ is an
ideal of $B$, $J\cap A=J^{c}$ will denote its contraction in $A$. We
will also use the notation $S(I)=IB\cap A$ for the contraction of the
extension of an ideal $I$ of $A$.

Following the notation in \cite[p.~31]{es}, we write $\hull(I)$ for the
intersection of the isolated primary components of $I$.

Note that, if $\alpha\in\bn^{n}_0$, and according to our multi-index
notation, $x^{\alpha}$ is not normally a power of $x=x_1\cdots x_n$
but rather is a monomial in $x_1,\ldots, x_n$. This monomial
$x^{\alpha}$ is indeed a unit in the localized ring $A_x$, since $A_x$
equals the Laurent ring
$B=A[x_1,\ldots,x_n,x_1^{-1},\ldots,x_n^{-1}]$.

The next (standard) result helps to describe the associated primes of
$I$ in terms of the associated primes of $IB$, its extension in $B$.

\begin{proposition}\label{S(I)}
Let $I$ be a PCB ideal of $A$. Then the following hold.
\begin{itemize}
\item[$(a)$] $S(I)=\hull(I)$.
\item[$(b)$] Either $I$ is unmixed and $I=S(I)$, or else $I=S(I)\cap
  \fc$, where $\fc$ is $\fm$-primary and this intersection is irredundant.
\item[$(c)$] If $\alpha\in\bn_0^n\setminus \{0\}$,
  $S(I)=I:(x^{\alpha})^{\infty}:= \{f\in A\mid fx^{N\alpha}\in I,
  \mbox{ for some }N\gg 0\}$.
\item[$(d)$] Suppose that $IB=\fb_1\cap \ldots \cap \fb_r$ is a
  minimal primary decomposition of $IB$ in $B$. Then
  $S(I)=\fb_1^c\cap \ldots \cap \fb_r^c$ is a minimal primary
  decomposition of $S(I)$ in $A$ and $\rad(\fb_i^c)=\rad(\fb_i)^c$.
\end{itemize}
\end{proposition}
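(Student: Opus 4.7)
The plan is to exploit the dichotomy provided by Proposition~\ref{aci}$(a)$: every associated prime $\fp$ of $I$ is either $\fm$ (and so contains every $x_i$) or else has height $n-1$ and contains no $x_i$ at all. Since the multiplicatively closed set $S$ is generated by $x=x_1\cdots x_n$, this says precisely that the associated primes of $I$ meeting $S$ are exactly the embedded ones, which can only be $\fm$. For part~$(a)$, I would begin with a minimal primary decomposition $I=\bigcap_{i=1}^r \fq_i$ with $\rad(\fq_i)=\fp_i$, and apply the standard extension-contraction behaviour of primary decompositions under localization at $S$: the primary components of $S(I)=IB\cap A$ are precisely the $\fq_i$ with $\fp_i\cap S=\emptyset$. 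By the dichotomy these are exactly the isolated ones, so $S(I)=\hull(I)$.

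Part~$(b)$ is then bookkeeping. Either $I$ is unmixed and $\hull(I)=I=S(I)$, or $I$ has an embedded prime, which the distinctness of associated primes together with the dichotomy forces to be $\fm$; letting $\fc$ be the corresponding $\fm$-primary component one obtains $I=\hull(I)\cap\fc=S(I)\cap\fc$, and this intersection is irredundant because $I\subsetneq S(I)$ in the mixed case and $\rad(I)\neq\fm$ (since $\height(I)=n-1<n$) prevents dropping $\fc$. For $(c)$, the key observation is that for $\alpha\in\bn_0^n\setminus\{0\}$ and any associated prime $\fp$ of $I$, the condition $x^\alpha\in\fp$ is equivalent to $\fp$ containing some $x_i$, which by the dichotomy already forces $\fp=\fm$. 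Thus the set of associated primes avoiding $x^\alpha$ coincides with the set of minimal primes, independently of $\alpha$, and the standard saturation formula $I:(x^\alpha)^\infty=\bigcap_{x^\alpha\notin\fp_i}\fq_i$ then yields $I:(x^\alpha)^\infty=\hull(I)=S(I)$.

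Part~$(d)$ reduces to the classical bijection between primary ideals of $B=S^{-1}A$ and primary ideals of $A$ disjoint from $S$: the contraction map $\fb\mapsto\fb^c$ preserves primariness with $\rad(\fb^c)=\rad(\fb)^c$, and extension back to $B$ is its inverse on this range. Contracting $IB=\fb_1\cap\cdots\cap\fb_r$ termwise gives $S(I)=\fb_1^c\cap\cdots\cap\fb_r^c$, and minimality in $A$ transfers directly from minimality in $B$ via the bijection: the radicals stay distinct, and any redundant $\fb_i^c$ would re-extend to a redundant $\fb_i$ in $B$. The one step which is not formal, and so the main obstacle I can identify, is part~$(c)$: the identity $S(I)=I:(x^\alpha)^\infty$ for an \emph{arbitrary} nonzero monomial $x^\alpha$ (rather than just $x$) is not a general fact about localization but a direct consequence of Proposition~\ref{aci}$(a)$, namely that no associated prime of $I$ contains only a proper subset of the variables. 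The remaining ingredients are standard commutative-algebra formalism.
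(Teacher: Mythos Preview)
Your proposal is correct and follows essentially the same approach as the paper: both rely on the dichotomy from Proposition~\ref{aci}$(a)$ to identify the minimal primes of $I$ as exactly those avoiding $S$, then use standard localization/primary-decomposition formalism for parts $(a)$, $(b)$, $(d)$, and for $(c)$ the same observation that any nonzero monomial $x^\alpha$ lies in an associated prime of $I$ if and only if that prime is $\fm$. The paper computes $(c)$ by expanding $I:x^{N\alpha}$ directly over the primary components, while you phrase it via the saturation formula $I:(x^\alpha)^\infty=\bigcap_{x^\alpha\notin\fp_i}\fq_i$; these are the same argument.
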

\begin{proof}
By Proposition~\ref{aci}$(a)$, $I$ has a minimal primary decomposition
either of the form $I=\fa_1\cap\ldots\cap\fa_r$, or else $I=\fa_1\cap
\ldots \cap\fa_r\cap \fc$, where the $\fa_j$ are $\fp_j$-primary ideals
with $\height(\fp_j)=n-1$, and $\fc$ is $\fm$-primary. In particular,
$x_i\not\in\fp_j$ for each $i$. Therefore $IB=\fa_1B\cap\ldots\cap
\fa_rB$ is a minimal primary decomposition of $IB$ in $B$ and
$S(I)=\fa_1\cap\ldots \cap\fa_r$, which is precisely equal to
$\hull(I)$.

Moreover, either $I$ is unmixed and $I=S(I)$, or else $I=S(I)\cap \fc$,
where $\fc$ is $\fm$-primary and this intersection is irredundant. This
proves $(b)$.

If $I=\fa_1\cap\ldots\cap\fa_r$, $\alpha\in\bn^n_0\setminus \{0\}$ and
$N\gg 0$,
$I:x^{N\alpha}=\cap_{j=1}^{r}(\fa_j:x^{N\alpha})=\cap_{j=1}^{r}\fa_j$,
because for all $i,j$, $x_i\not\in\fp_j=\rad(\fa_j)$ and $\fa_j$ is
$\fp_j$-primary. On the other hand, if $I=\fa_1\cap \ldots
\cap\fa_r\cap \fc$ and $N\gg 0$, then
$I:x^{N\alpha}=\cap_{j=1}^{r}(\fa_j:x^{N\alpha})\cap
(\fc:x^{N\alpha})=\cap_{j=1}^{r}\fa_j$ again, because $\rad(\fc)=\fm$ and
$\fc:x^{N\alpha}=A$, for $N\gg 0$. Thus, in both cases,
$I:x^{N\alpha}=\fa_1\cap\ldots\cap\fa_r=S(I)$ when $N\gg 0$.

Finally, if $IB=\fb_1\cap \ldots \cap \fb_r$ is a minimal primary
decomposition of $IB$ in $B$, then $S(I)=\fb_1^c\cap\ldots \cap
\fb_r^c$ is a primary decomposition of $S(I)$ in $A$, where
$\rad(\fb_i^c)=\rad(\fb_i)^c$. Moreover, if $\fb_1^c\supseteq
\fb_2^c\cap \ldots\cap\fb_r^c$, say, then, since $S^{-1}A$ is a flat
extension of $A$, $\fb_1=\fb_1^{ce}\supseteq \fb_2^{ce}\cap
\ldots\cap\fb_r^{ce}=\fb_2\cap \ldots\cap\fb_r$, a
contradiction. Therefore $S(I)=\fb_1^c\cap\ldots \cap \fb_r^c$ is a
minimal primary decomposition.
\end{proof}

Before proceeding we state, for the sake of reference, a list of
well-known properties of lattice ideals.

\begin{proposition}\label{bm} 
Let $M$ be an $n\times s$ integer matrix and let $\bm\subseteq \bz^n$
be the lattice spanned by the columns of $M$. Let
$I(M)=(x^{(m_{*,j})_{+}}-x^{(m_{*,j})_{-}}\mid j=1,\ldots ,s)$ be the
ideal of $A$ generated by the binomials defined by the columns of $M$
and let $I(\bm)=(x^{u}-x^{v}\mid u,v\in\bn^{n}_0, u-v\in\bm )$ be the
lattice ideal of $A$ associated to $\bm$. The following hold:
\begin{itemize}
\item[$(a)$] $I(M)\subseteq I(\bm)$ and $I(\bm)=I(M):x^{\infty}$. In
  particular, $I(\bm)B\cap A=I(\bm)$;
\item[$(b)$] $I(M)B\equiv (x^{m_{*,j}}-1\mid j=1,\ldots ,s)B$
  coincides with $I(\bm)B\equiv (x^{\alpha}-1\mid \alpha\in\bm)B$;
\item[$(c)$] Given $\alpha\in\bz^n$, $\alpha\in\bm$ if and only if
  $x^{\alpha}-1\in I(M)B$;
\item[$(d)$] If $N$ is an $n\times r$ integer matrix with $I(M)=I(N)$,
  then $\bm=\mathcal{N}$.
\item[$(e)$] Let $Q$ be an $s\times s$ invertible integer matrix. If
  $MQ=T$, then $I(M)B=I(T)B$.
\end{itemize}
\end{proposition}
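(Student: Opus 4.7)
The plan is to work in the Laurent ring $B$, where the key identity
\[
x^{(m_{*,j})_+} - x^{(m_{*,j})_-} = x^{(m_{*,j})_-}\bigl(x^{m_{*,j}} - 1\bigr)
\]
immediately yields $I(M)B = (x^{m_{*,j}} - 1 \mid j = 1, \ldots, s)B$, since $x^{(m_{*,j})_-}$ is a unit in $B$. This lets us view $B/I(M)B$ through the lens of the group algebra $k[\bz^n]$, and the heart of the proposition is the standard identification $B/I(M)B \cong k[\bz^n/\bm]$. I would prove the statements in the order (b), (c), (d), (e), (a).

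For (b), the set $\Lambda := \{\alpha \in \bz^n : x^\alpha - 1 \in I(M)B\}$ is a subgroup of $\bz^n$, since $x^{\alpha+\beta} - 1 = x^\alpha(x^\beta - 1) + (x^\alpha - 1)$ and $x^{-\alpha} - 1 = -x^{-\alpha}(x^\alpha - 1)$. As $\Lambda$ contains each column $m_{*,j}$, it contains the whole lattice $\bm$; the reverse inclusion is immediate. Running the same argument with $I(\bm)$ in place of $I(M)$ gives $I(\bm)B = (x^\alpha - 1 \mid \alpha \in \bm)B$, so both ideals coincide in $B$, which is (b). For (c), I would introduce the surjective ring homomorphism $\widetilde\phi: B \to k[\bz^n/\bm]$ sending $x_i \mapsto [e_i]$. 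Its kernel obviously contains each $x^\alpha - 1$ for $\alpha \in \bm$, hence contains $I(M)B$ by (b); conversely, the standard identification of a group algebra modulo the relations coming from a subgroup shows $\ker\widetilde\phi = (x^\alpha - 1 \mid \alpha \in \bm)B$, so $\ker\widetilde\phi = I(M)B$ and $B/I(M)B \cong k[\bz^n/\bm]$. Then $x^\alpha - 1 \in I(M)B$ iff $[\alpha] = 0$ iff $\alpha \in \bm$, which is (c).

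Parts (d) and (e) then follow quickly. For (d), $I(M) = I(N)$ implies $I(M)B = I(N)B$, and (c) gives $\alpha \in \bm \iff x^\alpha - 1 \in I(M)B = I(N)B \iff \alpha \in \mathcal{N}$. For (e), if $MQ = T$ with $Q$ invertible over $\bz$, then $M = TQ^{-1}$ too, so each set of columns is an integer combination of the other, whence $\bm = \mathcal{T}$; (b) then gives $I(M)B = I(\bm)B = I(\mathcal{T})B = I(T)B$. Finally for (a), $I(M) \subseteq I(\bm)$ is clear, and the general identity $I(M):x^\infty = I(M)B \cap A$ combined with (b) reduces the claim to $I(\bm)B \cap A = I(\bm)$, which is the main obstacle. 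I would handle it by restricting $\widetilde\phi$ to $A$: the resulting homomorphism $\phi: A \to k[\bz^n/\bm]$ has kernel exactly $I(\bm)$ (the inclusion $I(\bm) \subseteq \ker\phi$ is immediate; for the reverse, decompose any $f \in \ker\phi$ according to the natural $\bz^n/\bm$-grading on $A$ and, within each homogeneous piece, write $\sum_\alpha c_\alpha x^\alpha = \sum_\alpha c_\alpha (x^\alpha - x^{\alpha_0})$ for a fixed representative $\alpha_0$, each summand being a standard generator of $I(\bm)$). Since $\widetilde\phi|_A = \phi$, we conclude $I(\bm)B \cap A = \ker\widetilde\phi \cap A = \ker\phi = I(\bm)$, completing (a). Throughout, the only delicate point is this last saturation step, which is handled cleanly by the pair of compatible maps $\phi$ and $\widetilde\phi$ into the group algebra $k[\bz^n/\bm]$.
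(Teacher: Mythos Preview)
Your proof is correct and proceeds along a genuinely different route from the paper's. The paper's argument is largely by citation: for part~(a) it invokes \cite[Lemma~7.6]{ms} to obtain $I(\bm)=I(M):x^{\infty}$ directly, and for the converse direction of~(c) it appeals to the argument in \cite[Theorem~2.1(a)]{es} on Laurent binomial ideals and partial characters; parts~(b), (d), (e) are then deduced formally. By contrast, you give a self-contained argument organized around the surjection $\widetilde\phi:B\cong k[\bz^n]\to k[\bz^n/\bm]$ and its restriction $\phi:A\to k[\bz^n/\bm]$. The identification $\ker\widetilde\phi=(x^{\alpha}-1\mid\alpha\in\bm)B$ yields~(c) without recourse to the Eisenbud--Sturmfels theory, and the computation $\ker\phi=I(\bm)$ via the $\bz^n/\bm$-grading on $A$ replaces the citation of \cite{ms} for the saturation statement in~(a). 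Your approach is more elementary and makes the role of the quotient group $\bz^n/\bm$ explicit; the paper's is terser but delegates the two substantive points to the literature. Parts~(d) and~(e) are handled identically in both treatments. One minor quibble: in your proof of~(b), the phrase ``the reverse inclusion is immediate'' is momentarily ambiguous (it could be misread as the inclusion $\Lambda\subseteq\bm$, which is precisely the nontrivial content of~(c)); it would be clearer to say explicitly that the reverse inclusion \emph{of ideals}, namely $I(M)B\subseteq (x^{\alpha}-1\mid\alpha\in\bm)B$, is immediate.
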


\begin{proof}
The containment at the beginning of $(a)$ is clear and the first
equality is \cite[Lemma~7.6]{ms}. In particular, $I(\bm)B\cap
A=I(\bm)$, because for any ideal $J$ of $A$, $JB\cap
A=J:x^{\infty}$. Since the $x_i$ are invertible in the Laurent
polynomial ring $B=S^{-1}A$, which is a flat $A$-module,
$I(\bm)B=(I(M):x^{\infty})B=I(M)B$. This proves $(b)$. If
$\alpha\in\bm$, then $x^{\alpha}-1\in I(\bm)B=I(M)B$, by item
$(b)$. Conversely, take $x^{\alpha}-1\in I(M)B=I(\bm)B$. Let
$\rho:\bm\to k^{*}$ be the trivial character and
$L_{\rho}=\bm$. Following the notation in \cite[\S~2]{es}, $I(\bm)B$
is the Laurent binomial ideal $I(\rho)$. The argument in the second
paragraph of \cite[Theorem~2.1(a), p.~13, last line]{es} shows that
$\alpha\in\bm$. This proves $(c)$. Suppose now $I(M)=I(N)$ and take
$\alpha\in\bm$. Then, by $(c)$, $x^{\alpha}-1\in I(M)B=I(N)B$.  By
$(c)$ again, this implies that $\alpha\in\mathcal{N}$, so that
$\bm\subseteq \mathcal{N}$. Analogously, $\mathcal{N}\subseteq
\bm$. This proves $(d)$. Finally, if $MQ=T$ with $Q$ invertible, then
$\bm=\bt$ and, by $(b)$, $I(M)B=I(\bm)B=I(\bt)B=I(T)B$.
\end{proof}

With this terminology, we see that Proposition~\ref{S(I)}$(c)$ says
that the hull of a PCB ideal is the lattice ideal of the lattice
spanned by the columns of the PCB matrix. That is, in concrete terms,
we have the following.

\begin{corollary}\label{S(I)isI(bl)} 
Let $I$ the PCB ideal of $A$ associated to $L$. Then $S(I)=I(\bl)$,
where $\bl\subseteq \bz^n$ is the lattice spanned by the columns of
$L$.
\end{corollary}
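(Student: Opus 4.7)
The plan is to chain together two elementary facts already established in Proposition~\ref{bm}: part $(b)$ allows one to replace $I = I(L)$ by $I(\bl)$ after extending to the Laurent ring $B$, and part $(a)$ identifies the contraction $I(\bl)B \cap A$ with $I(\bl)$ itself. Since $S(I)$ is by definition $IB \cap A$, this chain immediately delivers the corollary.

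Concretely, I would write the argument as the string of equalities
\begin{equation*}
S(I) = IB \cap A = I(L)B \cap A = I(\bl)B \cap A = I(\bl).
\end{equation*}
The middle equality is Proposition~\ref{bm}$(b)$ applied to $M = L$, which asserts precisely that $I(L)B = I(\bl)B$ since inverting the variables turns each generator of $I(L)$ into a unit multiple of $x^{\ell}-1$ for $\ell$ the corresponding column of $L$, and conversely every Laurent generator $x^{\alpha}-1$ with $\alpha \in \bl$ lies in $I(L)B$. The last equality is the saturation identity of Proposition~\ref{bm}$(a)$ with $\bm = \bl$.

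A slightly more conceptual alternative would be to invoke Proposition~\ref{S(I)}$(c)$ with $\alpha = (1,\ldots,1) \in \bn^n$, so that $x^{\alpha} = x_1\cdots x_n = x$, yielding $S(I) = I : x^{\infty}$, and then to note from Proposition~\ref{bm}$(a)$ that $I(\bl) = I(L):x^{\infty}$ as well, so the two ideals coincide. No real obstacle presents itself: the corollary is essentially a bookkeeping consequence of the preceding propositions, and its genuine content is the identification $\hull(I) = I(\bl)$, which will serve as the starting point for the structural analysis of $\hull(I)$ via the Smith Normal Form carried out in the subsequent sections.
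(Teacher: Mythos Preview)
Your proposal is correct and essentially matches the paper's approach. In fact, your ``slightly more conceptual alternative'' is verbatim the paper's one-line proof: apply Proposition~\ref{S(I)}$(c)$ with $\alpha=(1,\ldots,1)$ and Proposition~\ref{bm}$(a)$ to obtain $S(I)=I(L):x^{\infty}=I(\bl)$; your primary chain via Proposition~\ref{bm}$(b)$ and the second clause of Proposition~\ref{bm}$(a)$ is an equally valid minor variant.
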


\begin{proof}
By Proposition~\ref{S(I)}$(c)$, with $\alpha=(1,\ldots ,1)$, and
Proposition~\ref{bm}$(a)$, $S(I)=I(L):x^{\infty}=I(\bl)$.
\end{proof}

We give now an explicit description of $S(I)$ and thus of $\hull(I)$
(see \cite[Problem~6.3]{es}).

\begin{proposition}\label{explicitsi}
Let $I=(\underline{f})=(f_1,\ldots ,f_n)$ be a PCB ideal and set
$J=(f_1,\ldots ,f_{n-1})$. Set $b(n)=(0,a_{2,2}-a_{2,3}-\ldots
-a_{2,n},\ldots ,a_{n-1,n-1}-a_{n-1,n},0)$. Then
$S(I)=I:x^{b(n)}=J:x^{b(n)}$.
\end{proposition}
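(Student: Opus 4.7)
The plan is to establish the chain $I\subseteq J:x^{b(n)}\subseteq I:x^{b(n)}\subseteq S(I)\subseteq J:x^{b(n)}$, which forces all four ideals to coincide. The first three inclusions are essentially formal: the syzygy from Remark~\ref{explicitrelation} gives $x^{b(n)}f_n=-\sum_{i<n}x^{b(i)}f_i\in J$, so $f_n\in J:x^{b(n)}$ and hence $I\subseteq J:x^{b(n)}$; the inclusion $J:x^{b(n)}\subseteq I:x^{b(n)}$ is immediate from $J\subseteq I$; and $I:x^{b(n)}\subseteq S(I)$ follows from Proposition~\ref{S(I)}$(c)$ with $\alpha=b(n)\neq 0$ (the case $n=2$, where $b(n)=0$ and $I=J$, being trivial by Remark~\ref{casen=2}).

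The substance lies in the final inclusion $S(I)\subseteq J:x^{b(n)}$, i.e., $x^{b(n)}\cdot S(I)\subseteq J$, which I would prove by analysing the primary decomposition of $J$. By Proposition~\ref{gradeI}$(a)$, $J$ is a complete intersection of height $n-1$, hence Cohen--Macaulay and unmixed; write $J=\fa_0\cap\fa_1\cap\cdots\cap\fa_s$ with $\fa_i$ being $\fp_i$-primary and $\height(\fp_i)=n-1$. A direct reduction shows that for $i\neq j$ the monomial $\prod_{k\neq i}x_k^{a_{k,i}}$ lies in $(x_j)$ since $a_{j,i}\geq 1$, whence $f_i\equiv x_i^{a_{i,i}}\pmod{x_j}$. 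This yields two key facts: for each $j\in\{1,\ldots,n-1\}$ the ideal $J+(x_j)$ has $\fp_0:=(x_1,\ldots,x_{n-1})$ as its unique minimal prime (of height $n-1$), while $J+(x_n)$ is $\fm$-primary. Hence the unique associated prime of $J$ containing any variable $x_j$ is $\fp_0$.

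Next I would establish the key membership $x^{b(n)}\in\fa_0$. The syzygy gives $x^{b(n)}f_n\in J\subseteq\fa_0$, and $f_n=x_n^{a_{n,n}}-\prod_{i<n}x_i^{a_{i,n}}\notin\fp_0$ because the first term is a pure power of $x_n\notin\fp_0$ while the second lies in $\fp_0$; hence $f_n$ is a unit in $A_{\fp_0}$, so $x^{b(n)}f_n\in JA_{\fp_0}$ forces $x^{b(n)}\in JA_{\fp_0}\cap A=\fa_0$. Consequently $\fa_0:x^{b(n)}=A$, while $\fa_i:x^{b(n)}=\fa_i$ for $i\geq 1$ since $x^{b(n)}\notin\fp_i$ (none of the $\fp_i$ for $i\geq 1$ contains a variable). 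Therefore $J:x^{b(n)}=\bigcap_{i\geq 1}\fa_i$. On the other hand, dividing the syzygy by the unit $x^{b(n)}\in B$ gives $f_n\in JB$, so $IB=JB$ and $S(I)=IB\cap A=JB\cap A$; the latter equals $\bigcap_{i\geq 1}\fa_i$ because inverting all variables kills precisely the $\fa_0$ component. This gives $S(I)=J:x^{b(n)}$, closing the chain.

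The crucial technical point---and the main obstacle---is promoting $x^{b(n)}\in\fp_0$ to the stronger statement $x^{b(n)}\in\fa_0$, which is exactly what makes a single power in the colon sufficient rather than requiring $(x^{b(n)})^N$ for large $N$. This promotion relies delicately on the specific form of $f_n$ (so that it is a unit at $\fp_0$) together with the explicit shape of the syzygy from Remark~\ref{explicitrelation}; without these two ingredients one only obtains $S(I)=I:(x^{b(n)})^\infty$, the weaker saturation statement already available from Proposition~\ref{S(I)}$(c)$.
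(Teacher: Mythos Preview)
Your argument is correct, but it proceeds by a genuinely different route than the paper's. Both proofs share the chain $I\subseteq J:x^{b(n)}\subseteq I:x^{b(n)}\subseteq S(I)$, obtained from the syzygy in Remark~\ref{explicitrelation} and Proposition~\ref{S(I)}$(c)$. The divergence is in how the reverse inclusion $S(I)\subseteq J:x^{b(n)}$ is established.

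The paper never identifies any particular associated prime of $J$. Instead, it observes that $J:x^{b(n)}$ is unmixed with associated primes among those of $J$, all of height $n-1$; since each such prime $\fp$ contains $I$, it is a minimal prime of $I$, and Proposition~\ref{aci}$(a)$ then guarantees that no variable lies in $\fp$. Localizing at $\fp$ one gets $(J:x^{b(n)})_{\fp}=J_{\fp}\subseteq I_{\fp}=S(I)_{\fp}$, and the equalities at every associated prime of the unmixed ideal $J:x^{b(n)}$ force $J:x^{b(n)}=S(I)$. Your approach, by contrast, singles out $\fp_0=(x_1,\ldots,x_{n-1})$ as the unique associated prime of $J$ meeting the variables, proves directly that $x^{b(n)}$ already lies in the $\fp_0$-primary component $\fa_0$ (using that $f_n$ is a unit at $\fp_0$), and then computes both $J:x^{b(n)}$ and $S(I)=JB\cap A$ as the same intersection $\bigcap_{i\geq 1}\fa_i$.

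What each buys: the paper's argument is shorter and uses only the abstract structural input from Proposition~\ref{aci}$(a)$, without ever naming $\fp_0$. Your argument is more explicit and yields the additional information that passing from $J$ to $S(I)$ deletes exactly one primary component of $J$, namely the $(x_1,\ldots,x_{n-1})$-primary one; this sharpens the picture of how $J$, $I$ and $S(I)$ relate. One small point worth making explicit in your write-up: for $n\geq 3$ you should note that $\fp_0$ is indeed associated to $J$ (otherwise $J:x^{b(n)}=J$ would force $f_n\in J$, contradicting Proposition~\ref{gradeI}$(c)$), so that the component $\fa_0$ genuinely occurs in the decomposition.
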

\begin{proof}
By Proposition~\ref{gradeI}$(a)$, $f_1,\ldots ,f_{n-1}$ is a regular
sequence in $A$. Hence $J$ is a complete intersection and an unmixed
ideal of height $n-1$.

If $n=2$, $I$ is principal and unmixed, and $J=I=S(I)$. Moreover,
$b(n)=(0,0)$ and $J:x^{b(n)}=I:x^{b(n)}=S(I)$.

Set $n\geq 3$, so $b(n)\neq 0$. By Remark~\ref{explicitrelation},
$x^{b(n)}f_n\in J$. Hence $x^{b(n)}I\subseteq J$. By
Proposition~\ref{S(I)}$(c)$,
\begin{eqnarray*}\label{inclusions}
I\subseteq J:x^{b(n)}\subseteq I:x^{b(n)}\subseteq
I:(x^{b(n)})^{\infty}=S(I).
\end{eqnarray*}
In particular, $J:x^{b(n)}$ is a proper ideal. By the properties of
the colon operation vis-\`a-vis intersection of ideals, since $J$ is
unmixed, it follows that $J:x^{b(n)}$ is unmixed with associated
primes a (non-empty) subset of the primes associated to $J$, and hence
each of height $n-1$.

Moreover, if $\fp$ is an associated prime of $J:x^{b(n)}$, since
$I\subseteq J:x^{b(n)}$, then $I\subseteq \fp$ and, since
$\height(\fp)=n-1$, $\fp$ is a minimal prime over $I$. In particular,
$x^{b(n)}\not\in\fp$ and
$(J:x^{b(n)})_{\mathfrak{p}}=J_{\mathfrak{p}}$.

Therefore, for any associated prime $\fp$ of $J:x^{b(n)}$ (so that
$\fp$ is a minimal prime over $I$),
\begin{eqnarray*}
I_{\mathfrak{p}}\subseteq
(J:x^{b(n)})_{\mathfrak{p}}=J_{\mathfrak{p}}\subseteq
I_{\mathfrak{p}}=S(I)_{\mathfrak{p}}.
\end{eqnarray*}
Hence $(J:x^{b(n)})_{\mathfrak{p}}=S(I)_{\mathfrak{p}}$ for all
associated primes $\fp$ of $J:x^{b(n)}$, so
$J:x^{b(n)}=I:x^{b(n)}=S(I)$.
\end{proof}

The next result is a kind of ad-hoc ``unmixedness test''. For a more
general result, see the Unmixedness Test of W.V. Vasconcelos in
\cite[p.~76]{vasconcelos}.

\begin{corollary}\label{test}
Let $I$ be a PCB ideal of $A$. Then the following conditions are
equivalent.
\begin{itemize}
\item[$(i)$] $I$ is unmixed;
\item[$(ii)$] Each of $x_{1},\ldots ,x_n$ is regular modulo $I$;
\item[$(iii)$] $I=I:x_1$.
\end{itemize}
\end{corollary}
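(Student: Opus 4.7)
The plan is to reduce everything to Proposition~\ref{aci}(a), which gives a clean dichotomy: every associated prime of $I$ is either a height $n-1$ prime containing none of the $x_i$, or else equals $\fm=(x_1,\ldots,x_n)$. Under this dichotomy, $I$ is unmixed if and only if $\fm\notin\mathrm{Ass}(A/I)$, which is the pivot of the argument.

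I would prove the equivalences cyclically. For $(i)\Rightarrow(ii)$, if $I$ is unmixed, then every associated prime of $I$ has height $n-1$; by Proposition~\ref{aci}(a) no such prime contains any $x_i$, so each $x_i$ avoids $\bigcup_{\fp\in\mathrm{Ass}(A/I)}\fp$, i.e., each $x_i$ is regular modulo $I$. For $(ii)\Rightarrow(iii)$, regularity of $x_1$ modulo $I$ is exactly the statement $I:x_1=I$.

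The substantive implication is $(iii)\Rightarrow(i)$. Assuming $I:x_1=I$, the element $x_1$ is a nonzerodivisor on $A/I$, hence $x_1\notin\fp$ for every $\fp\in\mathrm{Ass}(A/I)$. Now invoke Proposition~\ref{aci}(a): each such $\fp$ is either of height $n-1$ with $x_i\notin\fp$ for all $i$, or else $\fp=\fm$. Since $x_1\in\fm$, the second alternative is ruled out, so every associated prime has height $n-1$. Therefore $I$ has no embedded primes, i.e., $I$ is unmixed.

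There is no real obstacle here; the content of the corollary is that Proposition~\ref{aci}(a) forces any embedded component of $I$ to be $\fm$-primary, and $\fm$ can be detected by the single test element $x_1$ (by symmetry one could use any $x_i$, but the asymmetric formulation $(iii)$ is what makes the corollary a convenient computational criterion). The argument is characteristic-free and uses nothing beyond standard properties of associated primes and the colon ideal.
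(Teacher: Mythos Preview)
Your proof is correct. The implications $(i)\Rightarrow(ii)\Rightarrow(iii)$ match the paper's argument essentially verbatim. For $(iii)\Rightarrow(i)$ you take a slightly different route: the paper observes that $I=I:x_1$ forces $I=I:x_1^{\infty}$, invokes Proposition~\ref{S(I)}(c) to get $I=S(I)$, and then concludes via Proposition~\ref{S(I)}(b) that $I$ is unmixed. You instead argue directly from Proposition~\ref{aci}(a): if $x_1$ is regular modulo $I$ then $x_1$ lies in no associated prime, ruling out $\fm$ and forcing all associated primes to have height $n-1$. Your version is marginally more self-contained (it avoids the detour through $S(I)=\hull(I)$), while the paper's version ties the corollary into the $S(I)$ machinery that is used repeatedly elsewhere. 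Both rest on the same structural fact, Proposition~\ref{aci}(a).
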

\begin{proof}
If $I$ is unmixed and if $x_i$ were a zero-divisor modulo $I$, then
$x_i$ would be in an associated prime $\fp$ of $I$ and $\fp$ would be
equal to $\fm$, a contradiction. If $I=I:x_1$, then clearly
$I=I:x_1^{\infty}$. By Proposition~\ref{S(I)}$(c)$,
$S(I)=I:x_1^{\infty}$. Thus $I=S(I)$ and, by Proposition~\ref{S(I)}$(b)$,
$I$ is unmixed.
\end{proof}

Let us state the last result in terms of lattice ideals (cf. also
\cite[Corollary~2.5]{es} or \cite[Theorem~3.2]{lv}).

\begin{corollary}\label{S(I)vslatticeideal}
Let $I$ be a PCB ideal of $A$. Then $I$ is unmixed if and only if $I$
is a lattice ideal.
\end{corollary}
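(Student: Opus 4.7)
The plan is to package this as a short deduction from Proposition~\ref{S(I)}(b), Corollary~\ref{S(I)isI(bl)}, and Proposition~\ref{bm}(a); no new argument should be needed.

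For the forward direction, assume $I$ is unmixed. By Proposition~\ref{S(I)}(b), the two alternatives for a minimal primary decomposition of $I$ collapse to the first, so $I=S(I)$. Combining this with Corollary~\ref{S(I)isI(bl)}, which identifies $S(I)$ with $I(\bl)$ where $\bl\subseteq\bz^n$ is the lattice spanned by the columns of the PCB matrix $L$, gives $I=I(\bl)$. Thus $I$ is, by definition, the lattice ideal associated to $\bl$.

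For the converse, suppose there exists a lattice $\mathcal{N}\subseteq\bz^n$ with $I=I(\mathcal{N})$. Then Proposition~\ref{bm}(a) (applied to any matrix whose columns span $\mathcal{N}$) gives $I(\mathcal{N})B\cap A=I(\mathcal{N})$, equivalently $S(I)=IB\cap A=I$. Appealing once more to Proposition~\ref{S(I)}(b), the equality $I=S(I)$ rules out the second alternative (in which $I=S(I)\cap\fc$ would be an irredundant intersection with an $\fm$-primary ideal $\fc\neq A$), and so $I$ is unmixed.

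The only potentially delicate point is making sure that in the converse direction the lattice $\mathcal{N}$ witnessing ``$I$ is a lattice ideal'' need not be $\bl$ itself; but this is not actually required, because the conclusion $S(I)=I$ is immediate for any lattice $\mathcal{N}$ via Proposition~\ref{bm}(a), and $S(I)=I$ is precisely the criterion for unmixedness supplied by Proposition~\ref{S(I)}(b). There is therefore no real obstacle; the corollary is essentially a bookkeeping consequence of the machinery already assembled.
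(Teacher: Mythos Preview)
Your proof is correct and follows essentially the same route as the paper's own argument: both directions use Proposition~\ref{S(I)}(b) to reduce unmixedness to the equality $I=S(I)$, invoke Corollary~\ref{S(I)isI(bl)} for the forward implication, and Proposition~\ref{bm}(a) for the converse. Your added remark about the lattice $\mathcal{N}$ not needing to coincide with $\bl$ is a helpful clarification but not a departure from the paper's approach.
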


\begin{proof}
If $I$ is unmixed, by Proposition~\ref{S(I)}$(b)$, $I=S(I)$ and, by
Corollary~\ref{S(I)isI(bl)}, $S(I)$ is a lattice ideal. Conversely, if
$I=I(\bm)$ is a lattice ideal, then $S(I)=IB\cap A=I(\bm)B\cap A$,
which, by Proposition~\ref{bm}, is equal to $I(\bm)=I$. Hence,
$S(I)=I$ and, by Proposition~\ref{S(I)}$(b)$, $I$ is unmixed.
\end{proof}

\begin{remark}\label{unmixedifn=3}
{\rm Let $I$ be a PCB ideal of $A$. If $n\leq 3$, $I$ is unmixed. This
  follows from Remark~\ref{casen=2} for the case $n=2$, and the fact
  that, for $n=3$, PCB ideals are ideals of Herzog-Northcott type (cf.
  \cite[Proposition~2.2(b)]{op2}). }\end{remark}

\begin{proposition}\label{mixedifn>3}
Let $I=(\underline{f})$ be a PCB ideal of $A$, $n\geq 4$. Set
$g_1=x_2^{a_{2,1}}x_3^{a_{3,1}}\cdots x_{n-1}^{a_{n-1,1}}$ and
$g_2=x_2^{a_{2,n}}x_3^{a_{3,n}}\cdots x_{n-1}^{a_{n-1,n}}$. Let
$g=x_1^{a_{1,1}-1}x_n^{a_{n,n}-a_{n,1}}-x_1^{a_{1,n}-1}g_1g_2$. Then
$g\in (I:x_1)\setminus I$. In particular, $I$ is not unmixed.
\end{proposition}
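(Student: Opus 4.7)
The plan is to verify separately that $g\in I:x_1$ and $g\notin I$, after which Corollary~\ref{test} will immediately yield that $I$ is not unmixed, since $x_1$ will then be a zero-divisor modulo $I$.

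For the first inclusion, I would look for an explicit $A$-linear identity of the form $x_1g = \alpha f_1 + \beta f_n$, since $g$ is visibly built from the monomials of $f_1=x_1^{a_{1,1}}-g_1 x_n^{a_{n,1}}$ and $f_n=x_n^{a_{n,n}}-x_1^{a_{1,n}} g_2$. Choosing $\alpha=x_n^{a_{n,n}-a_{n,1}}$ and $\beta=g_1$ (so that the two ``middle'' terms cancel to $\pm g_1 x_n^{a_{n,n}}$), a routine expansion should give $x_n^{a_{n,n}-a_{n,1}}f_1+g_1 f_n=x_1g$. The exponent $a_{n,n}-a_{n,1}=a_{n,2}+\cdots+a_{n,n-1}$ is a non-negative integer by the PCB row-sum condition (and is positive for $n\geq 3$), so the identity lives in $A$.

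For the second half, the key idea is to reduce modulo the ideal $\fa=(x_2,\ldots,x_{n-1})$ and work in the quotient $A/\fa\cong k[x_1,x_n]$. Since every exponent $a_{i,j}$ is positive, both $g_1$ and $g_2$ belong to $\fa$, so the image of $g$ equals the pure monomial $\overline{g}=x_1^{a_{1,1}-1}x_n^{a_{n,n}-a_{n,1}}$. A parallel inspection of the generators shows $\overline{f}_1=x_1^{a_{1,1}}$, $\overline{f}_n=x_n^{a_{n,n}}$, while for each $2\leq i\leq n-1$ the first term $x_i^{a_{i,i}}$ lies in $\fa$, and the second term still contains some factor $x_j$ with $j\in\{2,\ldots,n-1\}\setminus\{i\}$ (a set which is non-empty precisely because $n\geq 4$), so it also lies in $\fa$. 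Hence $\overline{I}=(x_1^{a_{1,1}},x_n^{a_{n,n}})$ in $k[x_1,x_n]$. The final observation is that $\overline{g}$ has $x_1$-degree $a_{1,1}-1<a_{1,1}$ and $x_n$-degree $a_{n,n}-a_{n,1}<a_{n,n}$ (using $a_{n,1}\geq 1$), so $\overline{g}\notin(x_1^{a_{1,1}},x_n^{a_{n,n}})$, and therefore $g\notin I$.

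The main subtlety --- and the one place where the assumption $n\geq 4$ is genuinely used --- is ensuring that $\overline{f}_i=0$ for every middle generator $i\in\{2,\ldots,n-1\}$; the argument fails at $n=3$ precisely because the set $\{2,\ldots,n-1\}\setminus\{i\}$ collapses to $\emptyset$ and $\overline{f}_2$ would survive as a non-trivial monomial in $x_1,x_n$, which is consistent with the unmixedness statement of Remark~\ref{unmixedifn=3}. Once $g\in(I:x_1)\setminus I$ is secured, Corollary~\ref{test} (via the implication $(i)\Rightarrow(iii)$) concludes the argument.
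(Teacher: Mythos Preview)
Your proposal is correct and follows essentially the same approach as the paper: the identity $x_1g=x_n^{a_{n,n}-a_{n,1}}f_1+g_1f_n$, the reduction modulo $(x_2,\ldots,x_{n-1})$ to land in $(x_1^{a_{1,1}},x_n^{a_{n,n}})k[x_1,x_n]$, and the appeal to Corollary~\ref{test} are exactly what the paper does. Your explanation of why $n\geq 4$ is needed (so that each middle $f_i$ dies modulo $\fa$) is slightly more detailed than the paper's parenthetical remark, but the argument is the same.
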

\begin{proof}
It is easy to check that
$x_1g=x_n^{a_{n,n}-a_{n,1}}f_1+g_1f_n$. Moreover, if $g\in I$, setting
$x_i=0$ for $i=2,\ldots ,n-1$, it would follow that
$x_{1}^{a_{1,1}-1}x_{n}^{a_{n,n}-a_{n,1}}$ lies in
$(x_1^{a_{1,1}},x_{n}^{a_{n,n}})k[x_1,x_n]$, a contradiction. Thus
$g\in (I:x_1)\setminus I$. By Corollary~\ref{test}, $I$ is not
unmixed. (Observe that the condition $n\geq 4$ is essential, for if
$n=3$, the ideal obtained from $I$ when substituting $x_2$ by $0$ is
$(x_1^{a_{1,1}},x_1^{a_{1,2}}x_3^{a_{3,2}},x_{3}^{a_{3,3}})k[x_1,x_3]$.)
\end{proof}

\begin{example}\label{simplestn=4}
{\rm Let $I=(x_1^3-x_2x_3x_4,x_2^3-x_1x_3x_4,x_3^3-x_1x_2x_4,
  x_4^3-x_1x_2x_3)\subset A$ be the ``simplest'' PCB ideal in
  dimension 4. By Proposition~\ref{mixedifn>3}, $I$ is not unmixed. In
  fact, the element $g\in (I:x_1)\setminus I$ built in the proof there
  is $x_1^2x_4^2-x_2^2x_3^2$. A computation with Singular
  (see \cite{singular}) shows that $I:x_1=I+(x_1^2x_2^2-x_3^2x_4^2,
  x_1^2x_3^2-x_2^2x_4^2,x_1^2x_4^2-x_2^2x_3^2)$ and that
  $I:x_1=I:x_1^2$. In particular, by Proposition~\ref{S(I)}$(c)$,
  $S(I)=I:x_1$. Alternatively, from Proposition~\ref{explicitsi}, we
  get another description of $S(I)$, namely, since $b(4)=(0,1,2,0)$,
  $S(I)=I:(x_2x_3^2)$.

On the other hand, clearly $m(I)=(16, 16,16,16)$ and so
$d=\gcd(m(I))=16$. We will see (cf. Theorem~\ref{main} below) that,
provided $k=\bc$, $I$ has exactly sixteen isolated primary components
and one irredundant embedded primary component. The next result says
that $\fc=I+(x_1)=(x_1,x_2x_3x_4,x_2^3,x_3^3,x_4^3)$ is an embedded
primary component of $I$. Alternatively, $I+(x_2x_3^2)$ is another
embedded primary component of $I$.  }\end{example}

We now give an explicit description of an irredundant embedded
component of $I$, provided that $n\geq 4$, that is independent of the
characteristic of $k$. Note that in this case, each irredundant
primary decomposition of $I$ has precisely one embedded component.

\begin{theorem}\label{embeddedcomp}
Let $I=(\underline{f})$ be a PCB ideal of $A$, $n\geq 4$. Suppose that
$I:x^{\alpha}=I:(x^{\alpha})^{\infty}$ for some
$\alpha\in\bn_0^n\setminus \{0\}$. Then the following hold.
\begin{itemize}
\item[$(a)$] $I+(x^{\alpha})$ is an irredundant $\fm$-primary
  component of $I$;
\item[$(b)$] In particular, for $b(n)=(0,a_{2,2}-a_{2,3}-\ldots
  -a_{2,n},\ldots ,a_{n-1,n-1}-a_{n-1,n},0)$, $I+(x^{b(n)})$ is an
  irredundant $\fm$-primary component of $I$.
\end{itemize}
\end{theorem}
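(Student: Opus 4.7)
The plan is to exploit the classical ideal identity
$I = (I:x^{\alpha}) \cap (I + (x^{\alpha}))$,
which holds whenever the stabilization hypothesis $I:x^{\alpha} = I:(x^{\alpha})^{\infty}$ is in force. For the nontrivial inclusion, take $a = i + b x^{\alpha} \in (I:x^{\alpha}) \cap (I + (x^{\alpha}))$ with $i \in I$; then $a x^{\alpha} \in I$ forces $b x^{2\alpha} \in I$, hence $b \in I:(x^{\alpha})^{2} \subseteq I:(x^{\alpha})^{\infty} = I:x^{\alpha}$, so $b x^{\alpha} \in I$ and $a \in I$. Proposition~\ref{S(I)}$(c)$ then identifies $I:x^{\alpha} = I:(x^{\alpha})^{\infty} = S(I)$, producing the decomposition $I = S(I) \cap (I + (x^{\alpha}))$.

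The crucial step is to show that $I + (x^{\alpha})$ is $\fm$-primary; this is where the PCB structure is essential. Pick any $j$ with $\alpha_{j} > 0$. Since every off-diagonal entry $a_{l,m}$ of $L$ is strictly positive, for each $i \neq j$ the variable $x_{j}$ appears with positive exponent $a_{j,i}$ in the monomial term of $f_{i}$, so $f_{i} \equiv x_{i}^{a_{i,i}} \pmod{x_{j}}$. Hence $I + (x_{j})$ contains $x_{j}$ together with $x_{i}^{a_{i,i}}$ for every $i \neq j$, so $\rad(I + (x_{j})) = \fm$. Since any prime over $I + (x^{\alpha})$ must contain some $x_{j}$ with $\alpha_{j} > 0$, it must contain $I + (x_{j})$ and hence equal $\fm$. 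Thus $\rad(I + (x^{\alpha})) = \fm$, and $I + (x^{\alpha})$ is $\fm$-primary (it is proper since $x^{\alpha} \in \fm$).

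For irredundancy, Proposition~\ref{mixedifn>3} gives $I \neq S(I)$ when $n \geq 4$; consequently $S(I) \not\subseteq I + (x^{\alpha})$, since otherwise the displayed decomposition would collapse to $I = S(I)$. Expanding $S(I) = \bigcap_{j} \fa_{j}$ as its irredundant intersection of isolated primary components (Proposition~\ref{S(I)}$(a),(d)$) yields a primary decomposition $I = \bigcap_{j} \fa_{j} \cap (I + (x^{\alpha}))$. The component $I + (x^{\alpha})$ is non-redundant by the observation just made, while each $\fa_{i}$ is non-redundant because a witness $y \in \bigcap_{j \neq i} \fa_{j} \setminus \fa_{i}$ (supplied by irredundancy of the $S(I)$-decomposition) has $y x^{N\alpha} \in \bigcap_{j \neq i} \fa_{j} \cap (I + (x^{\alpha}))$ for $N \gg 0$ but $y x^{N\alpha} \notin \fa_{i}$, since no $x_{l}$ lies in $\rad(\fa_{i})$ by Proposition~\ref{aci}$(a)$. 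This establishes part~$(a)$.

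For part~$(b)$, observe that $b(n) \neq 0$: its second coordinate equals $a_{2,2} - (a_{2,3} + \cdots + a_{2,n}) = a_{2,1} > 0$ by the PCB hypothesis. Proposition~\ref{explicitsi} provides $I:x^{b(n)} = S(I)$, while Proposition~\ref{S(I)}$(c)$ independently gives $I:(x^{b(n)})^{\infty} = S(I)$, so the stabilization hypothesis of $(a)$ holds with $\alpha = b(n)$ and $(b)$ follows. The step I expect to be the main obstacle is the $\fm$-primary verification in the second paragraph: while conceptually direct, it depends essentially on every off-diagonal $a_{i,j}$ being strictly positive, the defining feature of a PCB matrix, and would fail for more general binomial ideals allowing zero off-diagonal exponents.
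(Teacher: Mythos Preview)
Your proof is correct and follows essentially the same route as the paper's: establish $I=(I:x^{\alpha})\cap(I+(x^{\alpha}))$ from the stabilization hypothesis, identify $I:x^{\alpha}$ with $S(I)=\hull(I)$ via Proposition~\ref{S(I)}, verify that $I+(x^{\alpha})$ is $\fm$-primary, invoke Proposition~\ref{mixedifn>3} for irredundancy, and apply Proposition~\ref{explicitsi} for part~$(b)$. The only differences are cosmetic: the paper cites \cite[Proposition~7.2$(a)$]{es} for the splitting identity where you prove it by hand, and the paper dispatches $\rad(I,x^{\alpha})=\fm$ with the word ``Clearly'' where you spell out the PCB computation; your additional check that the isolated components $\fa_i$ remain irredundant is correct but unnecessary, since isolated primary components are uniquely determined and hence automatically present in every irredundant decomposition.
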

\begin{proof}
By Proposition~\ref{S(I)}, $S(I)=\hull(I)$ and
$S(I)=I:(x^{\alpha})^{\infty}=I:x^{\alpha}$.  Moreover, since $n\geq
4$, by Proposition~\ref{mixedifn>3}, $I$ is not unmixed.

Since $I:x^{\alpha}=I:(x^{\alpha})^{\infty}$, by
\cite[Proposition~7.2$(a)$]{es}, $I=(I:x^{\alpha})\cap
(I+(x^{\alpha}))$, so $I=S(I)\cap (I+(x^{\alpha}))$, where
$S(I)=\hull(I)$ is the intersection of the isolated primary components
of $I$. Since $I$ is not unmixed, $I+(x^{\alpha})$ is not redundant.

Clearly, $\rad(I,x^{\alpha})=\fm$. Thus $I+(x^{\alpha})$ is
$\fm$-primary. One deduces that $I+(x^{\alpha})$ is an irredundant
$\fm$-primary component of $I$.

By Proposition~\ref{explicitsi}, $S(I)=I:x^{b(n)}$, i.e.,
$I:x^{b(n)}=I:(x^{b(n)})^{\infty}$. It follows, that $I+(x^{b(n)})$ is
an irredundant $\fm$-primary component of $I$.
\end{proof}

\begin{example}\label{onecomp}{\rm 
Let $I=(\underline{f})=(x_1^4-x_2x_3x_4,x_2^4-x_1^2x_3x_4,
x_3^3-x_1x_2^2x_4, x_4^3-x_1x_2x_3)\subset A$. Again, by
Proposition~\ref{mixedifn>3}, $I$ is not unmixed. Since
$b(n)=(0,1,2,0)$, by Theorem~\ref{embeddedcomp}, $I+(x_2x_3^2)$ is an
irredundant $\fm$-primary component of $I$.  On the other hand, the
integer vector associated to $I$ is $m(I)=(20,24,31,25)$ and its
greatest common divisor is $d=\gcd(m(I))=1$. By
Proposition~\ref{gradeI}, $\fp_{m(I)}=\ker(\varphi_{m(I)})$ is the
unique Herzog ideal containing $I$. Recall that the natural map
$\varphi_{m(I)}:A\to k[t]$ sends $x_1,x_2,x_3$ and $x_4$ to
$t^{20},t^{24},t^{31}$ and $t^{25}$, respectively. Therefore
$I\subseteq \fp_{m(I)}\cap \fc$. We will see
(cf. Corollary~\ref{S(I)prime} below) that, since $d=1$,
$S(I)=\fp_{m(I)}$, so $I=S(I)\cap \fc=\fp_{m(I)}\cap \fc$, an irredundant
intersection, and the previous inclusion is an equality.
}\end{example}

\section{A review of the normal decomposition of an 
integer matrix}\label{ashort}

In this section we review some well-known facts about linear algebra
over $\bz$ or, more generally, over a Principal Ideal Domain. Our
general reference is \cite[Chapter~3]{jacobson}. As before,
$A=k[\underline{x}]$ is the polynomial ring in $n$ variables
$\underline{x}=x_1,\ldots ,x_n$ over a field $k$, $n\geq 2$.

\begin{definition}{\rm 
Let $M$ be a non-zero $n\times s$ integer matrix. Then there exists an
$n\times n$ invertible integer matrix $P$ and an $s\times s$
invertible integer matrix $Q$ such that $PMQ=D$, where $D$ is an
$n\times s$ integer diagonal matrix $D=\diag(d_1,d_2,\ldots ,
d_r,0,\ldots ,0)$, where $d_i\in\bn$ and $d_i\mid d_j$ if $i\leq j$,
and $r=\rank(M)$. The matrix $D$ is called a {\em normal form} of $M$
and the expression $PMQ=D$ a {\em normal decomposition} of $M$ ($D$ is
also called the Smith Normal Form of $M$, see, e.g., \cite{singular}).
}\end{definition}

\begin{remark}\label{invariant}{\rm 
The non-zero diagonal elements of a normal form $D$ of $M$, referred
to as the {\em invariant factors} of $M$, are unique. Indeed, let
$I_{t}(M)$ be the ideal of $\bz$ generated by the $t\times t$-minors
of the matrix $M$, $I_{t}(M):=\bz$ for $t\leq 0$ and $I_{t}(M):=0$ for
$t> \min(n,s)$. Then $I_{t}(M)=I_{t}(PMQ)$ for all invertible integer
matrices $P$ and $Q$ (see, e.g., \cite[Chapter~5, Lemma~4.8 and
  Exercise~10, pp.~232-233]{clo}). In particular, $I_t(M)=I_t(D)$ and
$\gcd(I_{t}(M))=\gcd(I_t(D))$, understanding by the $\gcd(J)$ of a
non-zero ideal $J$ of $\bz$ its non-negative generator (and setting
$0$ to be the $\gcd$ of the zero ideal). Therefore, setting
$\Delta_t=\gcd(I_t(M))$, one has that $d_1=\Delta_1$,
$d_2=\Delta_2\Delta_1^{-1},\ldots ,d_r=\Delta_{r}\Delta_{r-1}^{-1}$,
where $r=\rank(M)$ (see, e.g., \cite[Theorems~3.8 and
  3.9]{jacobson}). Observe that, in particular, $d_1=\Delta_1$,
$d_1d_2=\Delta_2,\ldots ,d_1\cdots d_{r}=\Delta_r$.  }\end{remark}

\begin{lemma}\label{lastrow}
Let $I$ be the PCB ideal of $A$ associated to $L$. Let $m(I)$ be the
integer vector associated to $I$, $d=\gcd(m(I))$ and
$\nu(I)=m(I)/d$. Let $PLQ=D$ be a normal decomposition of $L$ and
$d_1,\ldots ,d_{n-1}$ the invariant factors of $L$. Then $d=d_1\cdots
d_{n-1}$. Moreover, the last row of $P$ is $\pm \nu(I)$.
\end{lemma}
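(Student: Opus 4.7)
The plan is to split the statement into its two assertions and tackle each via Lemma~\ref{rankofL} together with Remark~\ref{invariant}.

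For the equality $d=d_1\cdots d_{n-1}$, I would invoke Remark~\ref{invariant}, which identifies $d_1\cdots d_{n-1}$ with $\Delta_{n-1}$, the $\gcd$ of the $(n-1)\times (n-1)$ minors of $L$. Lemma~\ref{rankofL}(d),(e) tells us that, up to sign, these minors are precisely the entries of $\adj(L)$, and that every row of $\adj(L)$ equals $m(I)$. Hence the ideal $I_{n-1}(L)\subset\bz$ is generated by the entries of $m(I)$, so $\Delta_{n-1}=\gcd(m(I))=d$, which settles this part immediately.

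For the second assertion, let $p$ denote the last row of $P$. Since $\rank(L)=n-1$ by Lemma~\ref{rankofL}(a), the normal form $D$ has exactly $n-1$ non-zero diagonal entries, so its last row vanishes. Rewriting the normal decomposition as $PL=DQ^{-1}$ and reading off the last row gives $pL=0$, i.e., $p^{\top}\in\nulls(L^{\top})$. By Lemma~\ref{rankofL}(f), the $\bq$-line $\nulls(L^{\top})$ is spanned by $m(I)^{\top}$, so necessarily $p=\mu\,\nu(I)$ for some $\mu\in\bq$.

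The only genuinely delicate point, and where I expect to need the most care, is showing that $\mu=\pm 1$. Since $P$ is a unimodular integer matrix, expanding $\det(P)=\pm 1$ by cofactors along the row $p$ shows that $\gcd(p)$ divides $\pm 1$, so $p$ is a primitive integer vector. On the other hand, writing $\mu=a/b$ in lowest terms and using that $p\in\bz^n$ together with $\gcd(\nu(I))=1$ forces $b\mid\nu_i$ for every $i$, hence $b\mid\gcd(\nu(I))=1$, i.e., $\mu\in\bz$. Combining $\gcd(p)=1$ and $\gcd(\nu(I))=1$ with $p=\mu\,\nu(I)$ then gives $|\mu|=1$, so $p=\pm\nu(I)$, as required.
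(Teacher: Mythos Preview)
Your proof is correct and follows essentially the same route as the paper: both parts rely on Remark~\ref{invariant} and Lemma~\ref{rankofL} exactly as you do, and your argument that $\mu=\pm 1$ via primitivity of the last row of $P$ (from the cofactor expansion of $\det(P)=\pm 1$) is precisely the paper's reasoning, made slightly more explicit. The only cosmetic difference is that the paper phrases the proportionality as $r\,p_{n,*}=s\,\nu(I)$ with $r,s\in\bz\setminus\{0\}$ and then compares gcd's, whereas you write $p=\mu\,\nu(I)$ with $\mu\in\bq$ and clear denominators; these are the same argument.
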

\begin{proof}
Observe that, by Lemma~\ref{rankofL}, $\rank(L)=n-1$ and hence there
are $n-1$ (non-zero) invariant factors $d_1,\ldots ,d_{n-1}$. By
Remark~\ref{invariant}, $d_1\cdots
d_{n-1}=\Delta_{n-1}=\gcd(I_{n-1}(L))$. But the $(n-1)\times (n-1)$
minors of $L$ are precisely the entries of the matrix $\adj(L)$, each
of whose rows is equal to the last one, denoted by $m(I)$ (see
Lemma~\ref{rankofL} and Definition~\ref{defm(I)}). Thus $d_1\cdots
d_{n-1}=\Delta_{n-1}=\gcd(I_{n-1}(L))=\gcd(m(I))=d$.

Since $PL=DQ^{-1}$ and the last row of $DQ^{-1}$ is zero, $p_{n,*}L=0$
and $p_{n,*}^{\top}\in\nulls(L^{\top})$. By Lemma~\ref{rankofL} and
Definition~\ref{defm(I)}, $\nulls(L^{\top})$ is generated, as a
$\bq$-linear subspace, by the vector $m(I)^{\top}$, or equivalently,
by the vector $\nu(I)^{\top}=m(I)^{\top}/d$. Therefore there exist
$r,s\in\bz\setminus\{0\}$ such that $rp_{n,*}=s\nu(I)$. Observe that,
since $\det(P)=\pm 1$, then $\gcd(p_{n,*})=1$. Now, taking the
greatest common divisor, we get $r=\pm s$ and hence $p_{n,*}=\pm
\nu(I)$.
\end{proof}

\begin{example}\label{Pn=2}
{\rm Let $I$ be the PCB ideal of $A$ associated to $L$. Suppose that
  $n=2$. Then $a_{1,1}=a_{1,2}$ and $a_{2,1}=a_{2,2}$. Moreover
  $m(I)=(a_{2,2,},a_{1,1})$. Let $d=\gcd(m(I))$ and write
  $a_{i,i}=da_{i,i}^{\prime}$ and $d=b_1a_{1,1}+b_2a_{2,2}$ for some
  $b_1,b_2\in\bz$. The invariant factor of $L$ is $d_1=d$ and
\begin{eqnarray*}
\left( \begin{array}{rr}b_1&-b_2\\a_{2,2}^{\prime}&a_{1,1}^{\prime}
\end{array}\right)
\left( \begin{array}{rr}a_{1,1}&-a_{1,2}\\-a_{2,1}&a_{2,2}\end{array}\right)
\left( \begin{array}{rr}1&1\\0&1\end{array}\right)=
  \left( \begin{array}{rr}d&0\\0&0\end{array}\right)
\end{eqnarray*}
is a normal decomposition of $L$.
}\end{example}

In order to describe the isolated components of the PCB ideal $I$
associated to $L$, it will be convenient to know the entries of a
matrix $P$ in a normal decomposition $PLQ=D$ of $L$ (see
Theorem~\ref{main}).

If $n=3$ and if the entries of a row of $L$ are relatively prime, or
more generally if their greatest common divisor equals the first
invariant factor $d_1$, we see next that obtaining $P$ explicitly is
almost immediate.  Observe that the example also covers the situation
where $\gcd(a_{2,1},a_{2,3})=d_1$ or $\gcd(a_{1,2},a_{1,3})=d_1$, via
an appropriate relabelling of the suffices. However, calculating an
explicit normal decomposition of a general matrix $L$, even for $n=3$,
is technical and unilluminating. For concrete instances of the matrix
$L$, a normal decomposition of $L$ can be obtained for example in
Singular (see `smithNormalForm' \cite{singular}).

\begin{example}\label{Pn=3}
{\rm Let $I$ be the PCB ideal of $A$ associated to $L$. Suppose that
  $n=3$.  Let $m(I)$ be the integer vector associated to $I$,
  $d=\gcd(m(I))$ and $\nu(I)=m(I)/d$. Let $d_1,d_{2}$ be the invariant
  factors of $L$. In particular, $d_1=\gcd(I_1(L))$ and $d_1d_2=d$. Set
  $d_2^{\prime}=d_2/d_1$. Let
  $b=\gcd(a_{3,1},a_{3,2})=b^{\prime}d_1$. Let $c_1,c_2\in\bz$ with
  $b=c_1a_{3,1}+c_2a_{3,2}$. Set
  $\alpha_1=-c_1a_{1,1}+c_2a_{1,2}=d_1\alpha_1^{\prime}$ and
  $\alpha_2=c_1a_{2,1}-c_2a_{2,2}=d_1\alpha_2^{\prime}$, for some
  $\alpha_1^{\prime},\alpha_2^{\prime}\in\bz$. The following
  conditions are equivalent:
\begin{itemize}
\item[$(a)$] $\gcd(a_{3,1},a_{3,2})=d_1$;
\item[$(b)$] $\gcd(\nu_1,\nu_2)=1$, $b^{\prime}\mid c$ and
  $(b^{\prime})^{2}\mid d_2^{\prime}$, where
  $c=s_2\alpha_1^{\prime}-s_1\alpha_2^{\prime}$ and $s_1,s_2\in\bz$
  are such that $s_1\nu_1+s_2\nu_2=1$;
\item[$(c)$] There exists a normal decomposition $PLQ=D$ of $L$ with
  the first row of $P$ equal to $(0,0,1)$.
\end{itemize}
Moreover, in this particular case, the second row of $P$ is given by
$(s_2,-s_1,-c)$, while the third row is given by
$\nu(I)=(\nu_1,\nu_2,\nu_3)$.  }\end{example}
\begin{proof}
Set $a_{3,1}=b\tilde{a}_{3,1}$, $a_{3,2}=b\tilde{a}_{3,2}$, with
$\tilde{a}_{3,1},\tilde{a}_{3,2}\in\bn$. Let
$Q=\left(\begin{array}{rrr}-c_1&\tilde{a}_{3,2}&1\\-c_2&-\tilde{a}_{3,1}&1
  \\0&0&1\end{array}\right)$. Then $\det(Q)=1$ and
\begin{eqnarray*}
LQ=\left(\begin{array}{rrr}a_{1,1}&-a_{1,2}&-a_{1,3}
  \\-a_{2,1}&a_{2,2}&-a_{2,3}\\-a_{3,1}&-a_{3,2}&a_{3,3}\end{array}\right)
\left(\begin{array}{rrr}-c_1&\tilde{a}_{3,2}&1\\-c_2&-\tilde{a}_{3,1}&1
  \\0&0&1
\end{array}\right)=
  \left(\begin{array}{rrc}\alpha_1&m_2/b&0
    \\\alpha_2&-m_1/b&0\\b&0&0\end{array}\right).
\end{eqnarray*}
Since $\nu(I)^{\top}$ is a $\bq$-basis of $\nulls(L^{\top})$
(cf. Definition~\ref{defm(I)}), $\nu(I)LQ=0$ and
\begin{eqnarray}\label{equality}
\alpha_1\nu_1+\alpha_2\nu_2+b\nu_3=0.
\end{eqnarray}

For $P_1=\left(\begin{array}{rrr}0&0&1\\0&1&0\\1&0&0
\end{array}\right)$, $\det(P_1)=-1$ and $P_1LQ=
  \left(\begin{array}{lrc}b&0&0\\ \alpha_2&-m_1/b&0\\ \alpha_1&m_2/b&0
\end{array}\right)$.

For $P_2=\left(\begin{array}{rrr}1&0&0\\-\alpha_2^{\prime}&1&0
  \\-\alpha_1^{\prime}&0&1
\end{array}\right)$, $\det(P_2)=1$ and $P_2P_1LQ=
\left(\begin{array}{crc}b&0&0\\\alpha_2^{\prime}(d_1-b)&-m_1/b&0
  \\ \alpha_1^{\prime}(d_1-b)&m_2/b&0\end{array}\right)$.

The unique non-zero $2\times 2$ minor of $P_2P_1LQ$ defined by the
last two rows is, up to sign, equal to
$((d_1-b)d_2/b)[\alpha_1\nu_1+\alpha_2\nu_2]$, which, by the equality
(\ref{equality}) above, is equal to $-(d_1-b)d_2\nu_3$.  Since
$I_2(D)=I_2(L)=I_2(P_2P_1LQ)$ (cf. Remark~\ref{invariant}),
\begin{eqnarray*}
d=d_1d_2=\gcd(I_2(D))=\gcd(I_2(L))=\gcd(I_2(P_2P_1LQ))=
\gcd(\nu_1d,\nu_2d,(b-d_1)d_2\nu_3).
\end{eqnarray*} 
Since $b=d_1b^{\prime}$, then $d=d\cdot
\gcd(\nu_1,\nu_2,(b^{\prime}-1)\nu_3)$.  Therefore,
$\gcd(\nu_1,\nu_2,(b^{\prime}-1)\nu_3)=1$ and
$\gcd(\nu_1,\nu_2,(b^{\prime}-1))=1$.

Observe that until now we have not used any of the hypotheses $(a)$,
$(b)$ or $(c)$. Now suppose that $\gcd(a_{3,1},a_{3,2})=d_1$. Then
$b^{\prime}=1$ and $\gcd(\nu_1,\nu_2)=1$. Thus $(a)$ implies $(b)$.

Suppose now that $\gcd(\nu_1,\nu_2)=1$ (where $b$ is not assumed a
priori to be equal to $d_1$). Let $s_1,s_2\in\bz$ with
$s_1\nu_1+s_2\nu_2=1$. Set
$c=s_2\alpha_1^{\prime}-s_1\alpha_2^{\prime}$.  Let
$P_3=\left(\begin{array}{crr}
  1&0&0\\c&-s_1&s_2\\(1-b^{\prime})\nu_3&\nu_2&\nu_1
\end{array}\right)$. Then $\det(P_3)=-1$ and $P_3P_2P_1LQ=\left(
\begin{array}{ccc}d_1b^{\prime}&0&0\\
d_1c&d_2/b^{\prime}&0\\0&0&0\end{array}\right)$, using the
equality~(\ref{equality}) above.

Suppose that $b^{\prime}\mid c$. Set $\tilde{c}=c/b^{\prime}$ and
$P_4=\left(\begin{array}{rrr}1&0&0\\-\tilde{c}&1&0\\0&0&1\end{array}
  \right)$. Then $\det(P_4)=1$. Set $P=P_4P_3P_2P_1$. Then
  $P=\left(\begin{array}{crr}
    0&0&1\\s_2&-s_1&-\tilde{c}\\\nu_1&\nu_2&\nu_3
\end{array}\right)$ and
$PLQ=\left(
\begin{array}{ccc}d_1b^{\prime}&0&0\\0&d_2/b^{\prime}&0\\0&0&0
\end{array}\right)$.
If $(b^{\prime})^{2}\mid d_2^{\prime}$, then $d_1b^{\prime}$ and
$d_2/b^{\prime}$ are positive integers with $d_1b^{\prime}\mid
(d_{2}/b^{\prime})$. By the unicity of the normal form of $L$,
$b^{\prime}=1$. Therefore $PLQ=D$ is a normal decomposition of $L$ and
$(b)$ implies $(c)$.

Finally, suppose that there exists a normal decomposition $PLQ=D$ of
$L$, with the first row of $P$ equal to $(0,0,1)$. Equating the first
rows of the identity $PL=DQ^{-1}$, one has that, if
$Q^{-1}=(u_{i,j})$,
$(-a_{3,1},-a_{3,2},a_{3,3})=(d_1u_{1,1},d_1u_{1,2},d_1u_{1,3})$. Therefore
\begin{eqnarray*}
\gcd(a_{3,1},a_{3,2},a_{3,3})=d_1\cdot \gcd(u_{1,1},u_{1,2},u_{1,3})=d_1
\end{eqnarray*}
 and $(c)$ implies $(a)$.
\end{proof}

We finish the section with the answer to a question posed by Josep
\`Alvarez Montaner. Denote by $\fit_i(\bm)$ the $i$-th Fitting ideal
of a $\bz$-module $\bm$ (see, e.g., \cite[Definition~5.4.9]{clo}).

\begin{proposition}\label{torsion}
Let $I$ be the PCB ideal of $A$ associated to $L$. Let $m(I)$ be the
integer vector associated to $I$, $d=\gcd(m(I))$.  Let $\bl$ be the
lattice of $\bz^n$ spanned by the columns of $L$. Then the following
hold.
\begin{itemize}
\item[$(a)$] $\fit_1(\bz^n/\bl)=d\bz$ and $\fit_0(\bz^n/\bl)=0$.
\item[$(b)$] $\bz^{n}/\bl\cong \bz\oplus \bz/d_1\bz\oplus \ldots
  \oplus \bz/d_{n-1}\bz$, with $d_1,\ldots ,d_{n-1}$ the invariant
  factors of $L$.
\item[$(c)$] The cardinality of the torsion group of $\bz^n/\bl$ is
  $d$.
\item[$(d)$] $\bl$ is a direct summand of $\bz^{n}$ if and only if
  $d=1$.
\end{itemize}
\end{proposition}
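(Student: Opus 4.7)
The plan is to treat part (b) first via the Smith Normal Form, derive (c) immediately from (b) and Lemma~\ref{lastrow}, prove (a) directly from the definition of Fitting ideals using the fact that $L$ itself is a presentation matrix of $\bz^n/\bl$, and deduce (d) from the standard characterization of direct summands of free abelian groups.

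For (b), I would observe that the inclusion $\bl\hookrightarrow \bz^{n}$ fits into the short exact sequence
\begin{equation*}
\bz^{n}\stackrel{L}{\longrightarrow}\bz^{n}\longrightarrow \bz^{n}/\bl\longrightarrow 0,
\end{equation*}
so $L$ is a presentation matrix of $\bz^n/\bl$. Taking a normal decomposition $PLQ=D$ with $D=\diag(d_1,\ldots,d_{n-1},0)$ and using that $P,Q$ are $\bz$-automorphisms of $\bz^n$, $\bz^n$ respectively, one gets $\bz^{n}/\bl\cong \coker(D)\cong \bz/d_1\bz\oplus\ldots\oplus \bz/d_{n-1}\bz\oplus \bz$, which is (b). Part (c) then follows immediately: the torsion subgroup is $\bigoplus_{i=1}^{n-1}\bz/d_i\bz$, of cardinality $d_1\cdots d_{n-1}$, which by Lemma~\ref{lastrow} equals $d$.

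For (a), I would use that, since $L$ is an $n\times n$ presentation matrix of $\bz^n/\bl$, the $i$-th Fitting ideal equals $I_{n-i}(L)$ (the ideal of $(n-i)\times(n-i)$ minors of $L$). Hence $\fit_0(\bz^n/\bl)=I_n(L)=(\det L)=(0)$ by Lemma~\ref{rankofL}$(a)$ (which gives $\rank L=n-1$). For $\fit_1(\bz^n/\bl)=I_{n-1}(L)$: the $(n-1)\times(n-1)$ minors of $L$ are, up to sign, precisely the entries $h_{i,j}$ of $\adj(L)$, and by Lemma~\ref{rankofL}$(e)$ every row of $\adj(L)$ equals $m(I)=(m_1,\ldots,m_n)$. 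Hence $I_{n-1}(L)=(m_1,\ldots,m_n)\bz=\gcd(m(I))\bz=d\bz$.

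For (d), I would use the standard fact that a subgroup $\bl$ of the free abelian group $\bz^n$ is a direct summand if and only if $\bz^n/\bl$ is torsion-free (equivalently, free, since it is finitely generated). By (c), the torsion subgroup of $\bz^n/\bl$ is trivial precisely when $d=1$, yielding the equivalence. None of these steps is a serious obstacle: the key input is the identification, already established in Lemma~\ref{rankofL}, of the $(n-1)\times(n-1)$ minors of $L$ with the entries of $\adj(L)$ and hence with the components of $m(I)$; the mildest care is needed in writing $L$ as a presentation matrix and matching the Fitting-ideal index convention, so that $\fit_0$ corresponds to $n\times n$ minors and $\fit_1$ to $(n-1)\times(n-1)$ minors.
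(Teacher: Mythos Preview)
Your proof is correct and follows essentially the same approach as the paper: both use the Smith Normal Form to obtain (b), derive (c) from Lemma~\ref{lastrow}, and compute the Fitting ideals from $L$ as a presentation matrix. The only minor difference is in (d): you deduce the equivalence directly from (c) via the elementary fact that a subgroup of a finitely generated free abelian group is a direct summand precisely when the quotient is torsion-free, whereas the paper instead invokes the Fitting-ideal criterion for freeness (\cite[Proposition~20.8]{eisenbud}) together with (a); both routes are standard and equally short.
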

\begin{proof} 
Let $PLQ=D$ be a normal decomposition of $L$ and $d_1,\ldots ,d_{n-1}$
the invariant factors of $L$. By Lemma~\ref{lastrow}, $d=d_1\cdots
d_{n-1}$. By definition,
$\fit_{1}(\bz^n/\bl)=I_{n-1}(L)=I_{n-1}(D)=(d_1\cdots
d_{n-1})\bz=d\bz$ and $\fit_0(\bz^n/\bl)=I_{n}(L)=I_n(D)=0$. Since
$PLQ=D$ is a normal decomposition of $\bl$, the $\bz$-module
$\bz^{n}/\bl$ admits a decomposition $\bz\oplus\bt$, where
$\bt=\bz/d_1\bz\oplus \ldots \oplus\bz/d_{n-1}\bz$ is the torsion
module (see, e.g., \cite[Chapter~3]{jacobson}). Clearly $d=d_1\cdots
d_{n-1}$ is the cardinality of the torsion group of
$\bz^n/\bl$. Finally, since $\rank(L)=n-1$, then $\rank(\bl)=n-1$ and
$\rank(\bz^n/\bl)=1$ (see, e.g., \cite[\S~1.4]{bh}). Hence $\bl$ is a
direct summand of $\bz^n$ if and only if $\bz^{n}/\bl$ is a free
$\bz$-module of rank $1$. By \cite[Proposition~20.8]{eisenbud}, the
latter holds if and only if $\fit_1(\bz^n/\bl)=\bz$ and
$\fit_0(\bz^n/\bl)=0$, i.e., if and only if $d=1$.
\end{proof}

\begin{remark}{\rm 
Note that an obvious analogue of Proposition~\ref{torsion} holds for
any $n\times n$ matrix $M$ of rank $n-1$, with invariant factors
$d_1,\ldots ,d_{n-1}$, with $d$ now defined merely as the product
$d_1\cdots d_{n-1}$. Note also that the transpose $M^{\top}$ of $M$
again has $d_1,\ldots ,d_{n-1}$ as its invariant factors: indeed, any
integer matrix and its transpose have the same invariants. 
}\end{remark}

\begin{remark}\label{lvvsop}
{\rm There is an overlap between the results of Section~\ref{ashort}
  and the results of \cite[Section~3]{lv}. (Recall that an integer
  matrix and its transpose have the same invariant factors.) The
  latter results were obtained using Gr\"obner Basis Theory rather
  than the theory of Fitting ideals, and with different objectives in
  mind. For example, one can contrast the statement of
  \cite[Corollary~3.19]{lv} with the situation that obtains for the
  ideal $I$ considered in Example~\ref{onecomp} above. In
  Example~\ref{onecomp}, $d=1$ and $I(\bl)=S(I)=\fp_{m(I)}$, which is
  the kernel of the natural map $A\to k[t]$ sending $x_1,x_2,x_3$ and
  $x_4$ to $t^{20},t^{24},t^{31}$ and $t^{25}$, respectively, whereas
  in L\'opez and Villarreal's theory, $d=1$ and
  $I(\bl)=(x_1-x_2,x_1-x_3,x_1-x_4)$. Observe that the ideals in
  \cite{lv} have to be homogeneous in the standard grading, hence this
  simpler form. See also Remark~\ref{law}.}\end{remark}

\section{Applying the Eisenbud-Sturmfels theory of Laurent
 binomial ideals}\label{applying}

In this section we apply the theory of Laurent binomial ideals
developed in \cite{es}. Recall that for an $n\times s$ integer matrix
$M$, we denote by $m_{i,*}$ and $m_{*,j}$ its $i$-th row and $j$-th
column, respectively. The abelian group generated by the columns of
$M$ is denoted ${\bm}=\bz m_{*,1}+\ldots +\bz
m_{*,s}=\varphi(\bz^s)\subseteq \bz^n$, where
$\varphi:\bz^{s}\to\bz^{n}$ is the homomorphism defined by the matrix
$M$. By an $n\times n$ invertible integer matrix, we understand an
$n\times n$ matrix $P$ with entries in $\bz$ whose determinant is $\pm
1$. Thus its inverse matrix $P^{-1}$ is also an integer matrix. Set
$A=k[\underline{x}]$ to be the polynomial ring in $n$ variables
$\underline{x}=x_1,\ldots ,x_n$ over a field $k$, $n\geq 2$, and
$B=k[\underline{x}^{\pm}]=k[x_1,\ldots ,x_n,x_1^{-1},\ldots
  ,x_n^{-1}]$, the Laurent polynomial ring.

\begin{remark}\label{IBisci}{\rm 
Let $I=(\underline{f})=(f_1,\ldots ,f_n)$ be the PCB ideal of $A$
associated to $L$. Then
\begin{eqnarray*}
IB=(f_1,\ldots ,f_{n-1})B=(x^{l_{*,1}}-1,\ldots ,x^{l_{*,n-1}}-1)B.
\end{eqnarray*}
In particular, $IB$ is a complete intersection.  }\end{remark}
\begin{proof}
Clearly $IB=(f_1,\ldots ,f_n)B=(x^{l_{*,1}}-1,\ldots
,x^{l_{*,n}}-1)B$. Since the sum of the entries of each row is zero,
the last column of $L$, $l_{*,n}$, is a $\bz$-linear combination of
the first $n-1$ columns of $L$. Thus, by Remark~\ref{bm}$(c)$,
$x^{l_{*,n}}-1\in (x^{l_{*,1}}-1,\ldots ,x^{l_{*,n-1}}-1)B$. An
alternative proof would follow from Remark~\ref{explicitrelation} (see
the proof of Proposition~\ref{aci}$(b)$).
\end{proof}

We now make explicit the change of variables we will use.

\begin{lemma}\label{algindep}
Let $P=(p_{i,j})$ be an $n\times n$ invertible integer matrix. Set
$R=(r_{i,j})$, its inverse. Let $y_1=x^{r_{*,1}},\ldots
,y_n=x^{r_{*,n}}$ in $B=k[\underline{x}^{\pm}]=k[x_1,\ldots
  ,x_n,x_1^{-1},\ldots ,x_n^{-1}]$. Then
\begin{itemize}
\item[$(a)$] $x_1=y^{p_{*,1}},\ldots ,x_n=y^{p_{*,n}}$;
\item[$(b)$] $B=k[x_1,\ldots ,x_n,x_1^{-1},\ldots
  ,x_n^{-1}]=k[y_1,\ldots ,y_n,y_1^{-1},\ldots ,y_n^{-1}]$;
\item[$(c)$] $y_1,\ldots ,y_n$ are algebraically independent over $k$.
\end{itemize}
\end{lemma}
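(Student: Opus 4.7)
The plan is to verify the three assertions in turn, with most of the work being the monomial-exponent computation in (a); parts (b) and (c) then follow essentially for free.

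For (a), I would simply compute the exponent vector. Write $y^{p_{*,i}} = \prod_{j=1}^{n} y_{j}^{p_{j,i}}$, and substitute the definition $y_{j} = x^{r_{*,j}} = \prod_{l=1}^{n} x_{l}^{r_{l,j}}$. Collecting the exponent of each $x_{l}$ gives
\begin{eqnarray*}
y^{p_{*,i}} \;=\; \prod_{l=1}^{n} x_{l}^{\sum_{j=1}^{n} r_{l,j} p_{j,i}} \;=\; \prod_{l=1}^{n} x_{l}^{(RP)_{l,i}} \;=\; \prod_{l=1}^{n} x_{l}^{\delta_{l,i}} \;=\; x_{i},
\end{eqnarray*}
since $RP = I_{n}$.

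For (b), the inclusion $k[y^{\pm}] \subseteq k[x^{\pm}] = B$ is immediate from the definition $y_{j} = x^{r_{*,j}} \in B$ (the $y_{j}$ are units, since $y_{j}^{-1} = x^{-r_{*,j}} \in B$ as well). The reverse inclusion $B \subseteq k[y^{\pm}]$ follows at once from (a): each $x_{i} = y^{p_{*,i}} \in k[y^{\pm}]$, and similarly $x_{i}^{-1} = y^{-p_{*,i}} \in k[y^{\pm}]$.

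For (c), I would argue by transcendence degree. The ring $B$ is the field of fractions of no, rather, $B$ is a domain whose field of fractions $k(x_{1},\ldots,x_{n})$ has transcendence degree $n$ over $k$. By (b), $y_{1},\ldots,y_{n}$ generate $B$ as a $k$-algebra (together with their inverses), and hence they generate the fraction field of $B$ over $k$. A set of $n$ elements generating a field of transcendence degree $n$ over $k$ must be algebraically independent, giving (c). The only mild point to watch is that (a), (b), (c) are genuinely symmetric in the roles of $P$ and $R = P^{-1}$, which is reassuring since both are invertible integer matrices; so there is no real obstacle, and the proof reduces to the short exponent calculation displayed above.
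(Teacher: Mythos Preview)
Your proof is correct and follows essentially the same approach as the paper: the exponent computation for (a) via $RP=I_n$, the two inclusions for (b), and the transcendence-degree argument for (c) all match the paper's proof (the paper phrases (c) via $\dim A=\trdeg_k Q(A)$ and invokes Noether Normalization, but the content is identical). One cosmetic remark: clean up the self-correction in your (c) (``the field of fractions of no, rather, \ldots'').
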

\begin{proof}
Since $RP$ is the identity matrix, $y^{p_{*,i}}=y_{1}^{p_{1,i}}\cdots
y_n^{p_{n,i}}=x^{r_{*,1}p_{1,i}}\cdots
x^{r_{*,n}p_{n,i}}=x^{Rp_{*,i}}=x_i$. Clearly
$k[\underline{y}^{\pm}]\subseteq B$ and the equality follows by part
$(a)$. Writing $Q(R)$ to denote the quotient field of a domain $R$, we
have $Q(A)=Q(B)=Q(k[\underline{y}^{\pm}])=Q(k[\underline{y}])$. Thus
$\dim A=\trdeg_{k}(Q(A))=\trdeg_k(Q(k[\underline{y}]))$ and the
transcendence degree of $k[y_1,\ldots ,y_n]$ over $k$ is $n$. It
follows, e.g., using Noether's Normalization Lemma, that $y_1,\ldots
,y_n$ are algebraically independent over $k$.
\end{proof}

The next result expresses $IB$ in terms of the new variables.

\begin{lemma}\label{easyexpofIB}
Let $I$ be the PCB ideal of $A$ associated to $L$. Let $PLQ=D$ be a
normal decomposition of $L$ and $d_1,\ldots ,d_{n-1}$ the invariant
factors of $L$. Let $R=(r_{i,j})$ be the inverse of $P$. Set
$y_1=x^{r_{*,1}},\ldots ,y_n=x^{r_{*,n}}$ in
$B=k[\underline{x}^{\pm}]$. Then $IB=(y_1^{d_1}-1,\ldots
,y_{n-1}^{d_{n-1}}-1)B$.
\end{lemma}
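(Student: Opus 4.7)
The plan is to chain together the invariance of $I(M)B$ under right-multiplication of $M$ by an invertible integer matrix (Proposition~\ref{bm}$(e)$) with the normal decomposition $PLQ=D$, and then to read off the resulting generators directly in the new variables.

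More concretely, I would proceed as follows. First, by Remark~\ref{IBisci} (or Proposition~\ref{bm}$(b)$), one has $IB = I(L)B$, generated in $B$ by the Laurent binomials $x^{l_{*,j}}-1$, $j=1,\ldots,n$. Next, since $Q$ is an $n\times n$ invertible integer matrix, Proposition~\ref{bm}$(e)$ (with $M=L$) yields
\begin{equation*}
IB \;=\; I(L)B \;=\; I(LQ)B.
\end{equation*}
The normal decomposition $PLQ=D$ gives $LQ = P^{-1}D = RD$. Since $D=\diag(d_1,\ldots,d_{n-1},0)$, the $j$-th column of $RD$ is $d_j\, r_{*,j}$ for $j=1,\ldots,n-1$, while its $n$-th column is $0$. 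Therefore
\begin{equation*}
I(RD)B \;=\; \bigl(x^{d_j r_{*,j}}-1 \,\bigm|\, j=1,\ldots,n-1\bigr)B,
\end{equation*}
the $j=n$ generator being trivial.

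Finally, by the very definition $y_j = x^{r_{*,j}}$ we have $x^{d_j r_{*,j}} = (x^{r_{*,j}})^{d_j} = y_j^{d_j}$, so
\begin{equation*}
IB \;=\; I(RD)B \;=\; (y_1^{d_1}-1,\ldots,y_{n-1}^{d_{n-1}}-1)B,
\end{equation*}
as required.

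There is no real obstacle here: the result is essentially a bookkeeping exercise once one has Proposition~\ref{bm}$(e)$ and the algebraic independence/invertibility of the $y_i$ from Lemma~\ref{algindep}. The one point to mind is that $r_{*,j}$ may have negative entries, so the generators live naturally in $B$ rather than in $A$; Proposition~\ref{bm}$(b)$ makes this legitimate, since in $B$ the binomials $x^{\alpha}-1$ (for $\alpha\in\bz^n$) and $x^{\alpha_+}-x^{\alpha_-}$ differ by the unit $x^{\alpha_-}$. That is why the passage to the Laurent ring is essential before performing the change of variables.
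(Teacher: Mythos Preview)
Your proof is correct and uses essentially the same ingredients as the paper's: the normal decomposition $PLQ=D$, Proposition~\ref{bm}$(e)$, and the change of variables $y_j=x^{r_{*,j}}$. The only difference is the order of operations---the paper first substitutes $x_i=y^{p_{*,i}}$ to obtain $IB=(y^{(PL)_{*,j}}-1)B=(y^{(DQ^{-1})_{*,j}}-1)B$ and then applies Proposition~\ref{bm}$(e)$ with $Q^{-1}$, whereas you apply Proposition~\ref{bm}$(e)$ with $Q$ first to reach $I(RD)B$ and then read off the generators directly as $y_j^{d_j}-1$; your route is marginally more economical in that it bypasses the explicit substitution from Lemma~\ref{algindep}$(a)$.
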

\begin{proof}
By Remark~\ref{IBisci}, $IB=(x^{l_{*,1}}-1,\ldots
,x^{l_{*,n-1}}-1)B$. Using Lemma~\ref{algindep}$(a)$ and substituting
$x_i$ by $y^{p_{*,i}}$, we get $x^{l_{*,i}}=x_1^{-a_{1,i}}\cdots
x_i^{a_{i,i}}\cdots x_n^{-a_{n,i}}= y^{p_{*,1}(-a_{1,i})}\cdots
y^{p_{*,i}a_{i,i}}\cdots
y^{p_{*,n}(-a_{n,i})}=y^{Pl_{*,i}}$. Therefore the ideal
$(x^{l_{*,1}}-1,\ldots ,x^{l_{*,n-1}}-1)B$ is equal to
$(y^{Pl_{*,1}}-1,\ldots ,y^{Pl_{*,n-1}}-1)B$, and so equal to
$(y^{(DQ^{-1})_{*,1}}-1,\ldots ,y^{(DQ^{-1})_{*,n-1}}-1)B$. By
Remark~\ref{bm}$(e)$, the latter is equal to the ideal
$(y^{D_{*,1}}-1,\ldots ,y^{D_{*,n-1}}-1)B= (y_1^{d_{1}}-1,\ldots
,y_{n-1}^{d_{n-1}}-1)B$.
\end{proof}

Our aim now is to give a minimal primary decomposition of $IB$ in
terms of these new variables. Before this, we introduce some notation.

\begin{notation}\label{blambda}
{\rm Let $I$ be the PCB ideal associated to $L$, $m(I)$ its associated
  integer vector, $d=\gcd(m(I))$ and $\nu(I)=m(I)/d$. Let $PLQ=D$ be a
  normal decomposition of $L$ with $p_{n,*}=\nu(I)$ (see
  Lemma~\ref{lastrow}). Let $d_1,\ldots ,d_{n-1}$ be the invariant
  factors of $L$. Let $R=(r_{i,j})$ be the inverse of $P$ and set
  $y_1=x^{r_{*,1}},\ldots ,y_n=x^{r_{*,n}}$ in
  $B=k[\underline{x}^{\pm}]$. For any $\lambda=(\lambda_1,\ldots
  ,\lambda_{n-1})\in (k^{*})^{n-1}$, $k^{*}:=k\setminus\{0\}$, set
  $\fb_{\lambda}=(y_1-\lambda_1,\ldots
  ,y_{n-1}-\lambda_{n-1})B$. Clearly $\fb_{\lambda}$ is a prime ideal
  of $B$ of height $n-1$. In particular, $\fb_{\lambda}^{c}$ is a
  prime ideal of $A$ of height $n-1$.

Suppose that $k$ contains the $d_{n-1}$-th roots of unity. (Note that
then $k$ will also contain the $d_1$-th$,\ldots ,d_{n-2}$-th roots of
unity, respectively, since $d_{i}\mid d_{i+1}$ for $i=1,\ldots ,n-2$.)
We will write $\{\xi_{i,1},\ldots ,\xi_{i,d_{i}}\}$ to denote the set
of $d_i$-th roots of unity in $k$ when these exist and are distinct,
and set $\Lambda(D)=\prod_{i=1}^{n-1}\{\xi_{i,1},\ldots
,\xi_{i,d_{i}}\}\subset (k^{*})^{n-1}$. Clearly, if the characteristic
of $k$, $\ch(k)$, is zero or $\ch(k)=p$, $p$ a prime with $p\nmid
d_{n-1}$, then the cardinality of $\Lambda(D)$ is $d_1\cdots d_{n-1}$,
which, by Lemma~\ref{lastrow}, is equal to $d$.  }\end{notation}

\begin{theorem}\label{dcompofSI}
Let $I$ be the PCB ideal associated to $L$, $m(I)$ its associated
integer vector, $d=\gcd(m(I))$ and $\nu(I)=m(I)/d$. Let $d_1,\ldots
,d_{n-1}$ be the invariant factors of $L$. With the notations above:
\begin{itemize}
\item[$(a)$] Suppose that $k$ contains the $d_{n-1}$-th roots of unity
  and that the characteristic of $k$, $\ch(k)$, is zero or $\ch(k)=p$,
  $p$ a prime with $p\nmid d_{n-1}$. Then $IB=\cap
  _{\lambda\in\Lambda(D)}\fb_{\lambda}$ and $S(I)=\cap
  _{\lambda\in\Lambda(D)}\fb^{c}_{\lambda}$ are minimal primary
  decompositions. In particular, $IB$ and $S(I)$ are unmixed, radical
  and have exactly $d$ distinct associated primes.
\item[$(b)$] If $k$ is an arbitrary field, then $IB$ and $S(I)$ have
  at most $d$ distinct associated primes.
\end{itemize}
\end{theorem}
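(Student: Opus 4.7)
The key observation is that Lemma~\ref{easyexpofIB} already puts $IB$ into the clean form $(y_1^{d_1}-1,\ldots,y_{n-1}^{d_{n-1}}-1)B$, and by Lemma~\ref{algindep}(b) we may regard $B$ as $k[y_1^{\pm},\ldots,y_n^{\pm}]$. Since none of the generators involves $y_n$, this variable will serve as a ``free parameter'' in each component of the decomposition.

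For (a), the hypothesis that $\ch(k)\nmid d_{n-1}$ and that $k$ contains the $d_{n-1}$-th roots of unity guarantees, via $d_i\mid d_{n-1}$, that each $y_i^{d_i}-1$ splits in $k[y_i]$ into pairwise distinct linear factors $\prod_{j=1}^{d_i}(y_i-\xi_{i,j})$. Because the $\xi_{i,j}$ are distinct units, the ideals $(y_i-\xi_{i,j})B$ are pairwise comaximal, and iterating the Chinese Remainder Theorem over $i=1,\ldots,n-1$ yields
$$B/IB \cong \prod_{\lambda\in\Lambda(D)} B/\fb_{\lambda},$$
that is, $IB=\bigcap_{\lambda\in\Lambda(D)}\fb_{\lambda}$. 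Substitution $y_i\mapsto\lambda_i$ for $i<n$ (well-defined since $\lambda_i\in k^{*}$) identifies $B/\fb_{\lambda}$ with $k[y_n^{\pm}]$, a domain; so each $\fb_{\lambda}$ is prime of height $n-1$. They are pairwise distinct and, being primes of equal height, no one contains another, so the intersection is a minimal primary decomposition. Hence $IB$ is radical and unmixed with $|\Lambda(D)|=d_1\cdots d_{n-1}=d$ associated primes (by Lemma~\ref{lastrow}). Contracting via Proposition~\ref{S(I)}(d) gives the minimal primary decomposition $S(I)=\bigcap\fb_{\lambda}^{c}$, with each $\fb_{\lambda}^{c}$ again prime, completing (a).

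For (b), pass to an algebraic closure $\bar{k}$ and set $\bar{B}=\bar{k}[\underline{x}^{\pm}]$, so that $B\hookrightarrow\bar{B}$ is faithfully flat. In $\bar{k}[y_i]$ each polynomial $y_i^{d_i}-1$ factors (with possible multiplicities in bad characteristic) into $\prod(y_i-\xi'_{i,j})^{e_i}$ with at most $d_i$ distinct roots; so any minimal prime of $I\bar{B}$ must contain some linear factor $(y_i-\xi'_{i,j_i})$ for each $i<n$, hence must contain, and by a height count must equal, the prime $(y_1-\xi'_{1,j_1},\ldots,y_{n-1}-\xi'_{n-1,j_{n-1}})\bar{B}$. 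The number of such primes is bounded by $\prod d_i=d$. For each minimal prime $\fp$ of $IB$, going down for the faithfully flat extension $B\hookrightarrow\bar{B}$ gives a prime $\mathfrak{P}$ of $\bar{B}$ with $\mathfrak{P}\cap B=\fp$, which a short contraction argument (any prime strictly between $I\bar{B}$ and $\mathfrak{P}$ contracts to a prime strictly between $IB$ and $\fp$, contradicting minimality of $\fp$) shows is minimal over $I\bar{B}$. Since distinct $\fp$'s come from distinct $\mathfrak{P}$'s, $IB$ has at most $d$ minimal primes; and because $IB$ is a complete intersection in the regular ring $B$ (Remark~\ref{IBisci}), it is unmixed, so associated primes coincide with minimal primes. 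The same bound passes to $S(I)$ via Proposition~\ref{S(I)}(d).

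The main obstacle lies in part (b): in bad characteristic $I\bar{B}$ need not be radical and its primary decomposition may involve genuinely primary (not prime) components, so a direct ``component count'' breaks down. The resolution is to count instead the \emph{minimal primes} of $I\bar{B}$, which by the height/factorization argument are at most $\prod d_i=d$ in number, and to descend this count from $\bar{B}$ to $B$ using the faithful flatness of the extension together with the unmixedness of $IB$.
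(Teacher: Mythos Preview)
Your proof is correct and follows essentially the same strategy as the paper's: express $IB$ in the $y$-variables via the Smith normal form change of variables, decompose over the roots of unity for (a), and for (b) pass to the algebraic closure and descend the bound on minimal primes using that $IB$ is a complete intersection (hence unmixed). The only differences are cosmetic---you invoke the Chinese Remainder Theorem in (a) where the paper instead verifies $IB=\bigcap_{\lambda}\fb_{\lambda}$ by localizing at each $\fb_{\mu}$, and in (b) your parenthetical contraction argument tacitly uses incomparability, which holds here because $B\hookrightarrow\bar B$ is \emph{integral} (being a base change of the algebraic extension $k\hookrightarrow\bar k$), a point the paper records explicitly alongside faithful flatness.
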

\begin{proof}
As in Notation~\ref{blambda}, set $y_1=x^{r_{*,1}},\ldots
,y_n=x^{r_{*,n}}$ in $B=k[\underline{x}^{\pm}]$.  Lemma~\ref{algindep}
says that $\underline{y}=y_1,\ldots ,y_n$ are algebraically
independent over $k$ and
$B=k[\underline{x}^{\pm}]=k[\underline{y}^{\pm}]$. By
Remark~\ref{IBisci}, $IB$ is a complete intersection and, by
Lemma~\ref{easyexpofIB}, $IB=(y_1^{d_1}-1,\ldots
,y_{n-1}^{d_{n-1}}-1)B$.

Let us prove $(a)$. Consider
$\Lambda(D)=\prod_{i=1}^{n-1}\{\xi_{i,1},\ldots
,\xi_{i,d_{i}}\}\subset (k^{*})^{n-1}$ and, for any $\lambda\in
\Lambda(D)$, $\fb_{\lambda}=(y_1-\lambda_1,\ldots
,y_{n-1}-\lambda_{n-1})B$, as in Notation~\ref{blambda}. If $\fp$ is
any prime ideal containing $IB=(y_1^{d_1}-1,\ldots
,y_{n-1}^{d_{n-1}}-1)B$ then an immediate argument shows that
$\fb_{\lambda}\subseteq \fp$ for some $\lambda\in\Lambda(D)$.  Hence
the $\fb_{\lambda}$, for $\lambda\in\Lambda(D)$, are the minimal
primes containing $IB$. Since $IB$ is a complete intersection, the
$\fb_{\lambda}$ are also the associated primes of $IB$.

Consider the inclusion $IB\subseteq
\cap_{\lambda\in\Lambda(D)}\fb_{\lambda}$ and localize at a particular
associated prime $\fb_{\mu}=(y_1-\mu_{1},\ldots,y_{n-1}-\mu_{n-1})B$,
with $\mu\in\Lambda(D)$, say. We see that this inclusion becomes an
equality, since, if for any $\lambda\in\Lambda(D)$ and $i\in\{1,\ldots
,n-1\}$ we have $\lambda_i\neq \mu_i$, then $y_i-\lambda_i$ becomes a
unit in the localization and, as a result, each $y_i^{d_i}-1$ becomes
an associate of $y_i-\mu_{i}$. Therefore, by
Proposition~\ref{S(I)}$(d)$, $IB=\cap
_{\lambda\in\Lambda(D)}\fb_{\lambda}$ and $S(I)=\cap
_{\lambda\in\Lambda(D)}\fb^{c}_{\lambda}$ are minimal primary
decompositions of $IB$ and $S(I)$, respectively. Recall that the
cardinality of $\Lambda(D)$ is $d_1\cdots d_{n-1}=d$ by
Lemma~\ref{lastrow}.

Now suppose that $k$ is an arbitrary field; let $\overline{k}$ denote
the algebraic closure of $k$. Note that, by Base Change, the extension
$k[\underline{y}^{\pm}]\hookrightarrow
\overline{k}[\underline{y}^{\pm}]$ is faithfully flat and integral. In
particular, by the former property,
$J:=I\overline{k}[\underline{y}^{\pm}]$ is again a complete
intersection.

There are at most $d_i$ distinct $d_i$-th roots of unity in
$\overline{k}$. The above argument yields that there are then at most
$d$ distinct minimal and so associated primes of the ideal $J$. By
standard properties of integral extensions, the number of minimal
primes containing $IB$ is at most the number of minimal primes
containing $J$, and the result follows.
\end{proof}

As an immediate corollary, we have the following analogue of
\cite[Theorem~7.8]{op2}. Note that the next result can be interpreted
as giving equivalent conditions for $S(I)$ (which is a lattice ideal
by Corollary~\ref{S(I)isI(bl)}) to be a toric ideal (in the terminology of
\cite[Chapter~7]{ms}).

\begin{corollary}\label{S(I)prime}
Let $I$ be the PCB ideal associated to $L$, $m(I)$ its associated
integer vector and $d=\gcd(m(I))$. The following conditions are
equivalent:
\begin{itemize}
\item[$(i)$] $S(I)$ is prime;
\item[$(ii)$] $S(I)=\fp_{m(I)}$;
\item[$(iii)$] $d=1$.
\end{itemize}
\end{corollary}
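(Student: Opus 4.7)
The plan is to prove the cyclic chain $(iii)\Rightarrow(ii)\Rightarrow(i)\Rightarrow(iii)$, preceded by the containment $S(I)\subseteq \fp_{m(I)}$. This follows from $I\subseteq \fp_{m(I)}$ (Proposition~\ref{gradeI}(b)) together with the fact that $\fp_{m(I)}$ contains no $x_i$ (since $\varphi_{m(I)}(x_i)=t^{m_i}\neq 0$), which makes $\fp_{m(I)}B$ a proper prime of $B$, so $S(I)=IB\cap A\subseteq \fp_{m(I)}B\cap A=\fp_{m(I)}$.

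From this, $(i)\Leftrightarrow(ii)$ is a pure height argument: $S(I)$ has pure height $n-1$ by Propositions~\ref{S(I)}(b) and~\ref{aci}(a), so if $S(I)$ is prime, being contained in the height-$(n-1)$ prime $\fp_{m(I)}$ inside the polynomial ring $A$ forces equality, while the reverse direction is immediate. For $(iii)\Rightarrow(ii)$, I will use Lemma~\ref{easyexpofIB}: when $d=1$, Lemma~\ref{lastrow} gives $d_1=\cdots=d_{n-1}=1$, so $IB=(y_1-1,\ldots,y_{n-1}-1)B$ in the coordinates $y_i=x^{r_{*,i}}$ coming from a normal decomposition of $L$. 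Since $y_1,\ldots,y_n$ are algebraically independent with $B=k[\underline{y}^{\pm}]$ (Lemma~\ref{algindep}), the quotient $B/IB\cong k[y_n^{\pm}]$ is a domain, so $IB$ is prime and hence so is its contraction $S(I)$.

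The main step is $(ii)\Rightarrow(iii)$, to be carried out through the lattice description $S(I)=I(\bl)$ (Corollary~\ref{S(I)isI(bl)}). Let $\bl''=\{\alpha\in\bz^n:m(I)\cdot\alpha=0\}$; since $m(I)L=0$ (Lemma~\ref{rankofL}(f)) one has $\bl\subseteq \bl''$, and $\bl''$ is saturated by construction. Assuming $S(I)=\fp_{m(I)}$, I will establish the reverse inclusion: for $\alpha\in\bl''$, the identity $\varphi_{m(I)}(x^{\alpha_+}-x^{\alpha_-})=t^{m(I)\cdot\alpha_+}-t^{m(I)\cdot\alpha_-}=0$ places $x^{\alpha_+}-x^{\alpha_-}$ in $\fp_{m(I)}=S(I)=I(\bl)$, and multiplying by the unit $x^{-\alpha_-}\in B$ yields $x^{\alpha}-1\in I(\bl)B$; Proposition~\ref{bm}(c) then forces $\alpha\in\bl$. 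Hence $\bl=\bl''$ is saturated, so $\bz^n/\bl$ is torsion-free, and Proposition~\ref{torsion}(b) gives $d=1$. The delicate point is precisely here: one must orchestrate the three descriptions of $S(I)$---as contraction of a Laurent ideal, as the lattice ideal $I(\bl)$, and as the Herzog ideal $\fp_{m(I)}$---to extract saturation of $\bl$ and thereby annihilate the torsion that $d>1$ would impose on $\bz^n/\bl$.
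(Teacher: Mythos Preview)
Your proof is correct. The equivalence $(i)\Leftrightarrow(ii)$ and the implication $(iii)\Rightarrow(i)$ are handled essentially as in the paper (yours is in fact slightly more direct for $(iii)\Rightarrow(i)$: you read off $B/IB\cong k[y_n^{\pm}]$ straight from Lemma~\ref{easyexpofIB}, whereas the paper invokes Theorem~\ref{dcompofSI}(a) with $d=1$).

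The genuine divergence is in $(i)\Rightarrow(iii)$. The paper argues by contradiction in the $y$-coordinates: assuming $d>1$ and $IB$ prime, it uses that $y_{n-1}^{d_{n-1}}-1$ lies in the prime ideal $IB\cap k[y_1,\ldots,y_n]$, forces one of its two factors $y_{n-1}-1$ or $y_{n-1}^{d_{n-1}-1}+\cdots+1$ into $(y_1^{d_1}-1,\ldots,y_{n-1}^{d_{n-1}}-1)k[\underline{y}]$ after saturation, and derives a contradiction by specialising $y_1=\cdots=y_{n-2}=1$. Your route is lattice-theoretic: you identify $\fp_{m(I)}$ with the lattice ideal of the saturated lattice $\bl''=\ker(m(I)\cdot\,)$, use Proposition~\ref{bm}(c) to conclude $\bl=\bl''$ from $S(I)=\fp_{m(I)}$, and then read off $d=1$ from Proposition~\ref{torsion}. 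Your argument is more structural and makes transparent the standard principle that a lattice ideal is prime exactly when the lattice is saturated; the paper's argument is more hands-on and self-contained, not requiring the torsion computation of Proposition~\ref{torsion}. Both are clean; yours has the advantage of explaining \emph{why} $d$ governs primality (it is the order of the torsion obstructing saturation), while the paper's has the advantage of staying entirely within the Laurent-ring picture already set up for Theorem~\ref{dcompofSI}.
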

\begin{proof}
On one hand, by Proposition~\ref{gradeI}, $\fp_{m(I)}$ is a minimal
prime of $I$. Moreover, by Proposition~\ref{S(I)}$(a)$,
$S(I)=\hull(I)$, the intersection of the isolated primary components
of $I$. Therefore $I\subseteq S(I)\subseteq\fp_{m(I)}$ and
$\fp_{m(I)}$ is a minimal prime of $S(I)$ too. Therefore, $S(I)$ is
prime if and only if $S(I)=\fp_{m(I)}$.

Suppose that $d=1$. Then $d_{n-1}=1$ and $k$ fulfills the hypotheses
of $(a)$ in Theorem~\ref{dcompofSI}. Thus $S(I)$ is prime.

Conversely, suppose that $S(I)$ is prime and $d>1$. We will derive a
contradiction.

Now $IB$ equals the localization $S(I)_{x},$ so $IB$ is also
prime. Recall Lemmas~\ref{algindep} and \ref{easyexpofIB}, and set
$A^{\prime}=k[y_{1},\ldots, y_{n}]$.  Note that $d_{n-1}>1$ and that
$B=A_{y}^{\prime }$, where $y=y_{1}\cdots y_{n}$. Hence
$y_{n-1}^{d_{n-1}}-1\in IB\cap A^{\prime }= (y_{1}^{d_{1}}-1,\ldots
,y_{n-1}^{d_{n-1}}-1)A^{\prime}:_{A^{\prime}}y^{\infty}$. This last
ideal is a prime ideal in $A^{\prime }$ since $IB$ is prime in
$B$. Now for some $t\in\bn_{0}$, either $y^{t}(y_{n-1}-1)$ or
$y^{t}(y_{n-1}^{d_{n-1}-1}+\ldots +1)$ lies in
$(y_{1}^{d_{1}}-1,\ldots ,y_{n-1}^{d_{n-1}}-1)A^{\prime }$.

On setting each of $y_{1},\ldots ,y_{n-2}$ equal to $1$, we deduce
that either $y_{n-1}^{t}(y_{n-1}-1)$ or
$y_{n-1}^{t}(y_{n-1}^{d_{n-1}-1}+\ldots +1)$ lies in
$(y_{n-1}^{d_{n-1}}-1)k[y_{n-1}]$. Since $k[y_{n-1}]$ is a UFD, it
follows in fact that either $y_{n-1}-1\in
(y_{n-1}^{d_{n-1}}-1)k[y_{n-1}]$ or else $y_{n-1}^{d_{n-1}-1}+\ldots
+1\in (y_{n-1}^{d_{n-1}}-1)k[y_{n-1}]$. Since $d_{n-1}>1$, this is
impossible.
\end{proof}

Before proceeding to study the explicit description of
$\fb_{\lambda}^{c}$, we take advantage of this result to examine how
the class of PCB ideals plays off against the class of binomial ideals
considered by Ho\c{s}ten and Shapiro \cite{hs}. We find that the
overlap in the two classes is a trivial one (recall the notations at
the end of Section~\ref{introduction}; see also Proposition~\ref{bm}).

\begin{proposition}\label{PCBvshs}
Let $A=k[\underline{x}]$ be the polynomial ring in $n$ variables
$\underline{x}=x_1,\ldots ,x_n$. Let $\mathfrak{A}$ be the class of
binomial ideals of $A$ defined by the columns of $n\times r$ integer
matrices $M$ of rank $r$, $r\leq n$, with $\bm\cap \bn^n_0=\{ 0\}$ and
such that, as in \cite{hs}, the lattice ideal $I(\bm)$ is prime. Let
$\mathfrak{B}$ be the class of PCB ideals of $A$.
\begin{itemize}
\item[$(a)$] If $n>2$, the intersection of $\mathfrak{A}$ and
  $\mathfrak{B}$ is empty.
\item[$(b)$] If $n=2$, the intersection of $\mathfrak{A}$ and
  $\mathfrak{B}$ is the class of principal ideals generated by an
  irreducible binomial.
\end{itemize}
\end{proposition}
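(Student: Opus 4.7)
My plan is to exploit the uniqueness of the lattice associated to a binomial ideal (Proposition~\ref{bm}(d)) to force a rank/generator-count mismatch. Suppose $I \in \mathfrak{A} \cap \mathfrak{B}$, say $I = I(L) = I(M)$ with $M$ an $n \times r$ integer matrix of rank $r$. By Proposition~\ref{bm}(d), $\bm = \bl$, and by Lemma~\ref{rankofL}(a) this common lattice has rank $n - 1$; since the $r$ columns of $M$ are $\bq$-linearly independent they form a $\bz$-basis of $\bm$, so $r = n - 1$.

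For part~(a), with $n \geq 3$, this immediately contradicts Proposition~\ref{gradeI}(c), which asserts that every system of generators of a PCB ideal has at least $n$ elements; hence $\mathfrak{A} \cap \mathfrak{B} = \emptyset$ in this range. For part~(b), with $n = 2$, the forced value $r = 1$ means $M$ is a single column spanning the rank-one lattice $\bl = \bz\, l_{*,1}$; up to sign, $M = l_{*,1} = (a_{1,1}, -a_{2,1})^{\top}$, so $I = I(M) = (x_1^{a_{1,1}} - x_2^{a_{2,1}})$ is principal. Since for $n = 2$ the PCB ideal $I$ is unmixed (Remark~\ref{unmixedifn=3}), Corollary~\ref{S(I)isI(bl)} and Proposition~\ref{S(I)}(b) yield $I = S(I) = I(\bl) = I(\bm)$, so the condition ``$I(\bm)$ prime'' is exactly ``$x_1^{a_{1,1}} - x_2^{a_{2,1}}$ irreducible''. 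The condition $\bm \cap \bn_0^{2} = \{0\}$ comes for free: every $\alpha \in \bl$ satisfies $m(I)\cdot \alpha = 0$ by Lemma~\ref{rankofL}(f) and Definition~\ref{defm(I)}, and since both entries of $m(I)$ are strictly positive, any $\alpha \in \bn_0^2$ satisfying this must be zero. Conversely, any principal ideal generated by an irreducible binomial $x_1^a - x_2^b$ with $a, b \geq 1$ is the PCB ideal associated to the matrix $\left(\begin{array}{rr} a & -a \\ -b & b \end{array}\right)$, and with $M = (a, -b)^{\top}$ it meets all the requirements of class $\mathfrak{A}$.

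The essential content is the rank/generator-count confrontation in the first paragraph; once that is in place, both parts follow essentially by inspection, using the previously developed identifications $S(I) = I(\bl)$ and the unmixedness of PCB ideals for $n \leq 3$. No real technical obstacle arises beyond recognising this tension between the defining data of $\mathfrak{A}$ and $\mathfrak{B}$.
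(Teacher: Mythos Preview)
Your proof is correct and, for part~(a), essentially identical to the paper's: both use Proposition~\ref{bm}(d) to identify $\bm=\bl$, read off $r=n-1$ from Lemma~\ref{rankofL}(a), and invoke the generator bound in Proposition~\ref{gradeI}(c).

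For part~(b) there is a genuine, if small, difference worth noting. In the forward direction the paper passes to the Laurent ring to argue that $S(J)=JB\cap A$ is prime, then invokes Corollary~\ref{S(I)prime} (which rests on the Smith Normal Form analysis of Section~\ref{ashort} and Lemma~\ref{easyexpofIB}) to conclude $\gcd(m(J))=1$ and hence that the generator is irreducible. You instead observe directly that, since $I$ is unmixed for $n=2$, one has $I=S(I)=I(\bl)=I(\bm)$, so the hypothesis ``$I(\bm)$ prime'' from class~$\mathfrak{A}$ is literally the statement that the principal generator is irreducible. This is shorter and avoids the heavier machinery of Corollary~\ref{S(I)prime}; the paper's route, on the other hand, illustrates that corollary in action. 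Your verification that $\bl\cap\bn_0^2=\{0\}$ via orthogonality with $m(I)$ is also a nice explicit touch that the paper leaves as ``clearly''.
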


\begin{proof} 
Suppose that $J$ is an ideal in the intersection of $\mathfrak{A}$ and
$\mathfrak{B}$. Then $J=I(M)$, for some $n\times r$ matrix of rank
$r$, $r\leq n$, with $\bm\cap \bn^n_0=\{ 0\}$ and with $I(\bm)$ prime,
and $J=I(L)$ for an $n\times n$ PCB matrix $L$. By
Proposition~\ref{bm}$(d)$, $\bm=\bl$. Hence, by
Lemma~\ref{rankofL}$(a)$, $r=\rank(\bm)=\rank(\bl)=n-1$. Thus $J$,
which is generated by $r$ binomials defined by the $r$ columns of $M$,
can be generated by $r=n-1$ elements. If follows by
Proposition~\ref{gradeI}$(c)$ that $n$ must be equal to $2$. Hence, if
$n>2$, the intersection of $\mathfrak{A}$ and $\mathfrak{B}$ is empty.

Suppose now that $n=2$ and that $J$ is as above. By
Proposition~\ref{bm}$(b)$, $JB=I(M)B=I(\bm)B$, which, by hypothesis,
is prime. Therefore $S(J)=JB\cap A$ is also prime. By
Corollary~\ref{S(I)prime}, $\gcd(m(J))=1$. By Remark~\ref{casen=2},
$J=(x_1^{a_{1,1}}-x_2^{a_{2,2}})$, with $m(J)=(a_{2,2},a_{1,1})$. Thus
$\gcd(a_{1,1},a_{2,2})=1$. In particular,
$x_1^{a_{1,1}}-x_2^{a_{2,2}}$ is irreducible (see
\cite[Corollary~10.15]{fossum} or \cite[Lemma~8.2]{op2}; cf. also
Example~\ref{casen=2final}).

Conversely, suppose that $n=2$ and let $I=(f_1)$ be a principal ideal
generated by $f_1=x_1^{a}-x_2^{b}$, an irreducible binomial, i.e.,
$\gcd(a,b)=1$. In particular, $I$ is prime. Set
$M=(\begin{array}{cc}a&-b\end{array})^{\top}$ and complete $M$ to the
  obvious $2\times 2$ PCB matrix $L$. Clearly $M$ is a $2\times 1$
  matrix of rank $1$ with $\bm\cap \bn^2_0=\{ 0\}$ and
  $\bm=\bl$. Moreover, $I=I(M)=I(L)$. Thus $I$ is the PCB ideal
  associated to $L$. By Corollary~\ref{S(I)isI(bl)},
  $I(\bl)=S(I)$. But, by Remark~\ref{unmixedifn=3} and
  Proposition~\ref{S(I)}$(b)$, $I=S(I)$. Therefore,
  $I=S(I)=I(\bl)=I(\bm)$ and $I(\bm)$ is prime. Thus $I$ is in the
  intersection of $\mathfrak{A}$ and $\mathfrak{B}$.
\end{proof}

We now express $\fb_{\lambda}^{c}$ in terms of $\underline{x}$, the
original set of variables. We see that these prime ideals can be
expressed as the vanishing ideals of monomials curves ``with
coefficients''.

\begin{lemma}\label{alambda}
Let $PLQ=D$ be a normal decomposition of $L$ with
$p_{n,*}=(\nu_1,\ldots ,\nu_n)\in\bn^{n}$. Let $d_1,\ldots ,d_{n-1}$
be the invariant factors of $L$. Let $R=(r_{i,j})$ be the inverse of
$P$ and set $y_1=x^{r_{*,1}},\ldots ,y_n=x^{r_{*,n}}$ in
$B=k[\underline{x}^{\pm}]$. For any $\lambda=(\lambda_1,\ldots
,\lambda_{n-1})\in (k^{*})^{n-1}$, set
$\fa_{\lambda}=\ker(\varphi_{\lambda})$, where $\varphi_{\lambda}:A\to
k[t]$ is the natural map defined by the rule
$\varphi_{\lambda}(x_i)=\lambda_1^{p_{1,i}}\cdots
\lambda_{n-1}^{p_{n-1,i}}t^{\nu_i}$, for $i=1,\ldots ,n$. Then the
following hold.
\begin{itemize}
\item[$(a)$] $\fa_{\lambda}$ is a prime ideal of $A$ of height $n-1$;
\item[$(b)$] $\varphi_{\lambda}$ induces the morphism
  $\tilde{\varphi}_{\lambda}:B\to k[t,t^{-1}]$ that sends $y_{i}$ to
  $\lambda_i$, for $i=1,\ldots ,n-1$, and $y_n$ to $t$;
\item[$(c)$] $\fa_{\lambda}=\fb_{\lambda}\cap A=\fb_{\lambda}^{c}$,
  where $\fb_{\lambda}=(y_1-\lambda_1,\ldots
  ,y_{n-1}-\lambda_{n-1})B$.
\end{itemize}
\end{lemma}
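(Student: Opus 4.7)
The plan is to prove (b) first, since it sets up everything needed for (c), and then to derive (a) from the identification $\fa_\lambda = \fb_\lambda^c$ obtained in (c).

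\textbf{Step 1 (part (b)).} For each $i$, $\varphi_\lambda(x_i)=\lambda_1^{p_{1,i}}\cdots\lambda_{n-1}^{p_{n-1,i}}t^{\nu_i}$ is a unit in $k[t,t^{-1}]$, since its scalar coefficient lies in $k^{*}$ and $\nu_i\in\bn$. By the universal property of the localization $B=A_x$, the map $\varphi_\lambda$ extends uniquely to $\tilde\varphi_\lambda:B\to k[t,t^{-1}]$. To compute $\tilde\varphi_\lambda(y_i)$, expand $y_i=x^{r_{*,i}}=\prod_j x_j^{r_{j,i}}$ and obtain
$$\tilde\varphi_\lambda(y_i)=\prod_{k=1}^{n-1}\lambda_k^{\,\sum_j p_{k,j}r_{j,i}}\cdot t^{\sum_j \nu_j r_{j,i}}.$$
Since $R=P^{-1}$, one has $PR=I$, so $\sum_j p_{k,j}r_{j,i}=\delta_{k,i}$. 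Moreover, because $p_{n,*}=\nu(I)=(\nu_1,\ldots,\nu_n)$ by the hypothesis on $P$ (see Lemma~\ref{lastrow}), $\sum_j\nu_j r_{j,i}=\sum_j p_{n,j}r_{j,i}=(PR)_{n,i}=\delta_{n,i}$. Therefore $\tilde\varphi_\lambda(y_i)=\lambda_i$ for $i\leq n-1$ and $\tilde\varphi_\lambda(y_n)=t$, proving (b).

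\textbf{Step 2 (part (c)).} Since $\tilde\varphi_\lambda$ extends $\varphi_\lambda$, we have $\fa_\lambda=\ker(\varphi_\lambda)=\ker(\tilde\varphi_\lambda)\cap A=\ker(\tilde\varphi_\lambda)^{c}$, so it is enough to show $\ker(\tilde\varphi_\lambda)=\fb_\lambda$. Using Lemma~\ref{algindep}, $B=k[y_1^{\pm},\ldots,y_n^{\pm}]$, and hence $B/\fb_\lambda\cong k[y_n^{\pm}]$ via $y_i\mapsto\lambda_i$ for $i<n$ and $y_n\mapsto y_n$. Under this identification, the ring homomorphism induced by $\tilde\varphi_\lambda$ on the quotient is, by Step~1, the $k$-algebra isomorphism $k[y_n^{\pm}]\to k[t,t^{-1}]$ given by $y_n\mapsto t$. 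Consequently $\ker(\tilde\varphi_\lambda)=\fb_\lambda$, and $\fa_\lambda=\fb_\lambda\cap A=\fb_\lambda^{c}$.

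\textbf{Step 3 (part (a)).} Primality of $\fa_\lambda$ is immediate, being the kernel of a map to the domain $k[t]$. For the height, $\fb_\lambda$ is prime of height $n-1$ in $B$ because $B/\fb_\lambda\cong k[y_n^{\pm}]$ has Krull dimension $1$; since $B=S^{-1}A$ with $S$ generated by $x_1\cdots x_n$, the contraction $\fb_\lambda^{c}=\fa_\lambda$ is a prime of $A$ disjoint from $S$ of the same height $n-1$. (Alternatively, $A/\fa_\lambda$ embeds in $k[t]$ and its fraction field is $k(t)$, because $\gcd(\nu_1,\ldots,\nu_n)=1$ by Definition~\ref{defm(I)} forces $k[\varphi_\lambda(x_1),\ldots,\varphi_\lambda(x_n)]$ to generate $k(t)$ as a fraction field, hence $\dim(A/\fa_\lambda)=1$.)

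The only real obstacle is the matrix cancellation that identifies $\tilde\varphi_\lambda(y_n)$ as $t$ rather than some higher power $t^{N}$: this relies entirely on the special choice $p_{n,*}=\nu(I)$ guaranteed by Lemma~\ref{lastrow}. Once this alignment is exploited, the quotient $B/\fb_\lambda$ maps isomorphically onto $k[t,t^{-1}]$, and parts (c) and (a) follow formally.
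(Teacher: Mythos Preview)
Your proof is correct. The computation in part (b) is identical to the paper's, relying on the identity $PR=I$ and the choice $p_{n,*}=\nu(I)$.

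The only difference lies in the logical order and the argument for (c). The paper proves (a) first, directly, by observing that $k[\theta_1 t^{\nu_1},\ldots,\theta_n t^{\nu_n}]\subset k[t]$ is an integral extension (where $\theta_i=\lambda_1^{p_{1,i}}\cdots\lambda_{n-1}^{p_{n-1,i}}$), so $A/\fa_\lambda$ has Krull dimension $1$; it then deduces (c) from a single containment $\fb_\lambda^c\subseteq\fa_\lambda$ together with the equality of heights just established. You instead prove (c) first by identifying $\ker(\tilde\varphi_\lambda)$ with $\fb_\lambda$ via the isomorphism $B/\fb_\lambda\cong k[y_n^{\pm}]\xrightarrow{\sim} k[t,t^{-1}]$, and then read off (a) from the known height of $\fb_\lambda$. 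Your route for (c) is arguably cleaner, since it computes the kernel exactly rather than matching heights, and it makes (a) a corollary; the paper's route makes (a) self-contained and independent of (c). Both are short and standard.
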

\begin{proof}
Set $\theta_i=\lambda_1^{p_{1,i}}\cdots \lambda_{n-1}^{p_{n-1,i}}\in
k^{*}$.  Since $k[\theta_1t^{\nu_1},\ldots ,\theta_nt^{\nu_n}]\subset
k[t]$ is an integral extension, $A/\fa_{\lambda}\cong
k[\theta_1t^{\nu_1},\ldots ,\theta_nt^{\nu_n}]$ has Krull dimension
$1$, and $(a)$ follows. Notice that
\begin{eqnarray*}
&&\tilde{\varphi}_{\lambda}(y_{i})=\tilde{\varphi}_{\lambda}(x^{r_{*,i}})=
  (\tilde{\varphi}_{\lambda}(x_1))^{r_{1,i}}\cdots
  (\tilde{\varphi}_{\lambda}(x_n))^{r_{n,i}}=\\ &&\lambda_{1}^{p_{1,1}r_{1,i}+\ldots
    +p_{1,n}r_{n,i}}\cdots \lambda_{n-1}^{p_{n-1,1}r_{1,i}+\ldots
    +p_{n-1,n}r_{n,i}}t^{\nu_1r_{1,i}+\ldots +\nu_nr_{n,i}}.
\end{eqnarray*}
Since $PR$ is the identity matrix, the latter is equal to $\lambda_i$,
for $i=1,\ldots ,n-1$, and to $t$, for $i=n$. This proves
$(b)$. Finally, since $\fb_{\lambda}^{c}$ is a prime ideal of $A$ of
height $n-1$, to prove $(c)$ is enough to show
$\fb_{\lambda}^{c}\subseteq \fa_{\lambda}$.  Let $\sigma:A\to
B=S^{-1}A$ and $\rho:k[t]\to k[t,t^{-1}]$ be the canonical morphisms,
so $\tilde{\varphi}_{\lambda}\circ\sigma=
\rho\circ\varphi_{\lambda}$. Now, take $f\in\fb_{\lambda}^{c}$. Then
$f\in\fa_{\lambda}$ if and only if $\varphi_{\lambda}(f)=0$, and since
$\rho$ is injective, if and only if $\sigma(f)\in
\ker(\tilde{\varphi}_{\lambda})$. Since $\sigma(f)\in\fb_{\lambda}$,
it follows that $\sigma(f)=\sum_{i=1}^{n-1}g_i(y_i-\lambda_i)$, for
some $g_i\in B$. Thus $\tilde{\varphi}_{\lambda}(\sigma(f))=
\sum_{i=1}^{n-1} \tilde{\varphi}_{\lambda}(g_i)
\tilde{\varphi}_{\lambda}(y_i-\lambda_i)=0$ by $(b)$.
\end{proof}

We finish the section by stating the ``intrinsic'' role of the minimal
prime component $\fp_{m(I)}$, the unique Herzog ideal containing the
PCB binomial ideal $I$, among the other minimal primes of $I$. As was
to be expected, $\fp_{m(I)}$ is the minimal prime ideal picked out by
the element $(1,\ldots ,1)\in\Lambda(D)$, which exists for an
arbitrary coefficient field $k$.

\begin{remark}{\rm
Let $I$ be the PCB ideal associated to $L$, $m(I)$ its associated
integer vector, $d=\gcd(m(I))$ and $\nu(I)=m(I)/d$. Let $PLQ=D$ be a
normal decomposition of $L$ with $p_{n,*}=\nu(I)$. Let $d_1,\ldots
,d_{n-1}$ be the invariant factors of $L$.  Even if $k$ does not
contain the $d_{n-1}$-th roots of unity, we may write
$\{\xi_{i,1},\ldots ,\xi_{i,d_{i}}\}$ for the set of $d_i$-th roots of
unity in a field extension $\tilde{k}$ of $k$ (allowing possible
repetitions, by abuse of notation) and set
$\Lambda(D)=\prod_{i=1}^{n-1}\{\xi_{i,1},\ldots,\xi_{i,d_{i}}\}\subset
(\tilde{k}^{*})^{n-1}$ However, there is always one
$\lambda\in\Lambda(D)\cap (k^{*})^{n-1}$, namely $\lambda=(1,\ldots
,1)$. For this especial $\lambda$, $\varphi_{\lambda}:A\to k[t]$ sends
$x_i$ to $t^{\nu_i}$. Therefore, according to Lemma~\ref{alambda} and
Definition~\ref{defm(I)},
$\fa_{\lambda}=\ker(\varphi_{\lambda})=\fp_{\nu(I)}=\fp_{m(I)}$.
}\end{remark}

\section{Main Theorem}\label{mt}

We are now in position to state the main result of the paper,
recalling Theorem~\ref{embeddedcomp} for this purpose. As always,
$A=k[\underline{x}]$ is the polynomial ring in $n$ variables
$\underline{x}=x_1,\ldots ,x_n$ over a field $k$, $n\geq 2$,
$\fm=(\underline{x})$ is the maximal ideal generated by
$\underline{x}$, $S$ is the multiplicatively closed set generated by
$x=x_1\cdots x_n$ and $B=S^{-1}A=k[\underline{x}^{\pm}]=k[x_1,\ldots
  ,x_n,x_1^{-1},\ldots ,x_n^{-1}]$ is the Laurent polynomial ring. If
$I$ is an ideal of $A$, $IB$ denotes the extension of $I$ in $B$ and
$S(I)=IB\cap A$ the contraction of $IB$ in $A$.

\begin{theorem}\label{main}
Let $I$ be the PCB ideal associated to $L$, $m(I)$ its associated
integer vector, $d=\gcd(m(I))$ and $\nu(I)=m(I)/d$. Let $PLQ=D$ be a
normal decomposition of $L$ with $p_{n,*}=\nu(I)$. Let $d_1,\ldots
,d_{n-1}$ be the invariant factors of $L$.
\begin{itemize}
\item[$(a)$] Suppose that $k$ contains the $d_{n-1}$-th roots of unity
  and that the characteristic of $k$, $\ch(k)$, is zero or $\ch(k)=p$,
  $p$ a prime with $p\nmid d_{n-1}$. Write $\{\xi_{i,1},\ldots
  ,\xi_{i,d_{i}}\}$ to denote the set of $d_i$-th roots of unity in
  $k$ and
  $\Lambda(D)=\prod_{i=1}^{n-1}\{\xi_{i,1},\ldots,\xi_{i,d_{i}}\}$. For
  any $\lambda\in\Lambda(D)$, set
  $\fa_{\lambda}=\ker(\varphi_{\lambda})$, where
  $\varphi_{\lambda}:A\to k[t]$ is the natural map defined by the rule
  $\varphi_{\lambda}(x_i)=\lambda_1^{p_{1,i}}\cdots
  \lambda_{n-1}^{p_{n-1,i}}t^{\nu_i}$, for $i=1,\ldots ,n$. If $n\geq
  4$, $I=\cap_{\lambda\in\Lambda(D)}\fa_{\lambda}\cap \fc$, with $\fc$ an
  irredundant $\fm$-primary ideal, is a minimal primary decomposition
  of $I$ in $A$ and $I$ has exactly $d+1$ primary components. If
  $n\leq 3$, $I=\cap_{\lambda\in\Lambda(D)}\fa_{\lambda}$ is a minimal
  primary decomposition of $I$ in $A$, $I$ is radical and has exactly
  $d$ primary components.
\item[$(b)$] Suppose that $k$ is an arbitrary field. If $n\geq 4$, $I$
  is not unmixed and has at most $d+1$ primary components, only one of
  them embedded. If $n\leq 3$, $I$ is unmixed and has at most $d$
  primary components.
\end{itemize}
\end{theorem}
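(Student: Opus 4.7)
The plan is to assemble Theorem~\ref{main} from the results already in hand; essentially all the heavy lifting has been done in Theorem~\ref{dcompofSI}, Lemma~\ref{alambda}, Theorem~\ref{embeddedcomp} and Proposition~\ref{S(I)}, and what remains is to combine them correctly and count components.

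First I would treat part $(a)$. Under the stated hypotheses on $k$, Theorem~\ref{dcompofSI}$(a)$ gives $S(I)=\bigcap_{\lambda\in\Lambda(D)}\fb_{\lambda}^{c}$ as a minimal primary decomposition, with exactly $|\Lambda(D)|=d_{1}\cdots d_{n-1}=d$ distinct associated primes (using Lemma~\ref{lastrow}). By Lemma~\ref{alambda}$(c)$, $\fb_{\lambda}^{c}=\fa_{\lambda}$, so $S(I)=\bigcap_{\lambda\in\Lambda(D)}\fa_{\lambda}$ is a minimal primary (in fact prime) decomposition, and $S(I)$ is radical. Now I split on $n$. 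If $n\leq 3$, Remark~\ref{unmixedifn=3} says $I$ is unmixed, so by Proposition~\ref{S(I)}$(b)$, $I=S(I)$; this yields the decomposition $I=\bigcap_{\lambda\in\Lambda(D)}\fa_{\lambda}$, showing $I$ is radical with exactly $d$ primary components. If $n\geq 4$, Proposition~\ref{mixedifn>3} says $I$ is not unmixed, so Proposition~\ref{S(I)}$(b)$ gives $I=S(I)\cap \fc$ irredundantly for some $\fm$-primary $\fc$ (Theorem~\ref{embeddedcomp} even provides an explicit such $\fc$, e.g.\ $I+(x^{b(n)})$). Combining the two intersections, $I=\bigcap_{\lambda\in\Lambda(D)}\fa_{\lambda}\cap \fc$ is a minimal primary decomposition with exactly $d+1$ components, one of them embedded.

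For part $(b)$, the argument is the same dichotomy but with weaker input: Theorem~\ref{dcompofSI}$(b)$ only guarantees that $S(I)$ has at most $d$ associated primes (without knowing distinctness or radicality, because we no longer control the roots of unity). For $n\leq 3$, Remark~\ref{unmixedifn=3} together with Proposition~\ref{S(I)}$(b)$ gives $I=S(I)$, so $I$ is unmixed with at most $d$ primary components. For $n\geq 4$, Proposition~\ref{mixedifn>3} shows $I$ is not unmixed, so Proposition~\ref{S(I)}$(b)$ gives $I=S(I)\cap \fc$ irredundantly, adding exactly one embedded $\fm$-primary component to those of $S(I)$; hence $I$ has at most $d+1$ primary components with precisely one embedded. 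There is no real obstacle in the proof itself; the only subtle point is confirming that the $d$ distinct associated primes of $S(I)$ produced in part $(a)$ are genuinely distinct (which rests on the char$(k)\nmid d_{n-1}$ hypothesis, used in Notation~\ref{blambda} to guarantee $|\Lambda(D)|=d$) and that the intersection $S(I)\cap \fc$ remains irredundant (which is given by Proposition~\ref{S(I)}$(b)$).
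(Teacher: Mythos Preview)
Your proposal is correct and follows essentially the same route as the paper's own proof: assemble $S(I)=\bigcap_{\lambda\in\Lambda(D)}\fa_{\lambda}$ from Theorem~\ref{dcompofSI}$(a)$ and Lemma~\ref{alambda}, then split on $n$ via Remark~\ref{unmixedifn=3}/Proposition~\ref{mixedifn>3} and Proposition~\ref{S(I)}$(b)$; for part $(b)$ invoke Theorem~\ref{dcompofSI}$(b)$ together with the same dichotomy. The only cosmetic difference is that the paper cites Proposition~\ref{S(I)}$(d)$ in part $(b)$ (to pass from $IB$ to $S(I)$), but since Theorem~\ref{dcompofSI}$(b)$ already states the bound for $S(I)$ directly, your citation of Proposition~\ref{S(I)}$(b)$ suffices.
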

\begin{proof}
Let us show $(a)$. Let $R=(r_{i,j})$ be the inverse of $P$ and set
$y_1=x^{r_{*,1}},\ldots ,y_n=x^{r_{*,n}}$ in $B=k[\underline{x}^{\pm}]$.
By Theorem~\ref{dcompofSI}$(a)$, $IB=\cap
_{\lambda\in\Lambda(D)}\fb_{\lambda}$ is a minimal primary
decomposition of $IB$ in $B$, where
$\fb_{\lambda}=(y_1-\lambda_1,\ldots ,y_{n-1}-\lambda_{n-1})B$, and
$S(I)=\cap _{\lambda\in\Lambda(D)}\fb^{c}_{\lambda}$ is a minimal
primary decomposition of $S(I)$ in $A$. By Lemma~\ref{alambda},
$\fb_{\lambda}^{c}=\fa_{\lambda}$. Thus $S(I)=\cap
_{\lambda\in\Lambda(D)}\fa_{\lambda}$ is a minimal primary
decomposition of $S(I)$ in $A$. If $n\geq 4$, by
Proposition~\ref{mixedifn>3}, $I$ is not unmixed and, by
Proposition~\ref{S(I)}$(b)$, $I=S(I)\cap \fc$, with $\fc$ an $\fm$-primary
ideal, and this intersection is irredundant. Therefore
$I=\cap_{\lambda\in\Lambda(D)}\fa_{\lambda}\cap \fc$ is a minimal
primary decomposition of $I$ in $A$ and $I$ has exactly $d+1$ primary
components. On the other hand, if $n\leq 3$, by Remark~\ref{unmixedifn=3},
$I$ is unmixed and, by Proposition~\ref{S(I)}$(b)$,
$I=S(I)$. Therefore $I=\cap _{\lambda\in\Lambda(D)}\fa_{\lambda}$ is a
minimal primary decomposition of $S(I)$ in $A$, $I$ is radical and has
exactly $d$ primary components. Finally, $(b)$ follows from
Theorem~\ref{dcompofSI}$(b)$ and Proposition~\ref{S(I)}$(d)$.
\end{proof}

\begin{remark}\label{law}{\rm 
The thrust of \cite{lv} is to calculate the degree of a lattice ideal
that is homogeneous in the standard grading, whereas ours
(cf. Theorem~\ref{main} above) is to calculate the number of primary
components in a minimal binomial primary decomposition of a PCB ideal,
as well as to describe such components explicitly. The two enterprises
are of course linked to an extent by the Associativity Law of
Multiplicities.  }\end{remark}

\begin{example}\label{casen=2final}{\rm 
Let $I=(f_1,f_2)$ be the PCB ideal of $A$ associated to $L$,
$n=2$. Then $I=(x_1^{a_{1,1}}-x_2^{a_{2,2}})$,
$m(I)=(a_{2,2},a_{1,1})$ and $d=\gcd(m(I))=\gcd(a_{1,1},a_{2,2})$. Set
$a_{i,i}=da^{\prime}_{i,i}$ and $d=b_1a_{1,1}+b_2a_{2,2}$, for some
$b_1,b_2\in\bz$. Suppose that $k$ contains the $d$-th roots of unity
and that the characteristic of $k$, $\ch(k)$, is zero or $\ch(k)=p$,
$p$ a prime with $p\nmid d$.  Write $\Lambda(D)=\{\xi_{1},\ldots
,\xi_{d}\}$ to denote the set of $d$-th roots of unity in $k$. For any
$i=1,\ldots ,d$, set $\fa_i=\ker(\varphi_i)$, where $\varphi_i:A\to
k[t]$ is the natural map defined by the rule
$\varphi_i(x_1)=\xi_i^{b_{1}}t^{a^{\prime}_{2,2}}$ and
$\varphi_i(x_2)=\xi_i^{-b_{2}}t^{a^{\prime}_{1,1}}$. By
Example~\ref{Pn=2} and Theorem~\ref{main}, $I=\fa_1\cap \ldots \cap
\fa_d$ is a minimal primary decomposition of $I$ in $A$, $I$ is
radical and has exactly $d$ primary components.

Observe that each $\fa_i$ is a prime ideal of $A$ of height 1, hence
principal (see Lemma~\ref{alambda}). Clearly,
$x_1^{a^{\prime}_{1,1}}-\xi_ix_2^{a^{\prime}_{2,2}}$ is in $\fa_i$. A
variation of the argument in \cite[Corollary~10.15]{fossum} proves
that $x_1^{a^{\prime}_{1,1}}-\xi_ix_2^{a^{\prime}_{2,2}}$ is
irreducible. Alternatively, let $\tilde{k}$ be a field extension of
$k$ containing an $a_{2,2}^{\prime}$-th root $\eta$ of $\xi_i$. Set
$y_1=x_1$ and $y_2=\eta x_2$ and $A=k[x_1,x_2]\to
\tilde{k}[x_1,x_2]=\tilde{k}[y_1,y_2]=:C$, a flat extension. Set
$J=(x_1^{a^{\prime}_{1,1}}-\xi_ix_2^{a^{\prime}_{2,2}})C=
(y_1^{a^{\prime}_{1,1}}-y_2^{a^{\prime}_{2,2}})$, a PCB ideal in
$C=\tilde{k}[y_1,y_2]$, with
$\gcd(a_{1,1}^{\prime},a_{2,2}^{\prime})=1$. Applying
Theorem~\ref{main} or Corollary~\ref{S(I)prime} to $J$, one deduces
that $J$ is prime, hence $JC\cap
A=(x_1^{a^{\prime}_{1,1}}-\xi_ix_2^{a^{\prime}_{2,2}})$ is
prime. Therefore
$\fa_i=(x_1^{a^{\prime}_{1,1}}-\xi_ix_2^{a^{\prime}_{2,2}})$.  Since
$I =\fa_1\cap \ldots \cap \fa_d=\fa_1\cdots \fa_d$, the binomial
$x_1^{a_{1,1}}-x_2^{a_{2,2}}$ admits the decomposition
$\prod_{i=1}^{d} (x_1^{a^{\prime}_{1,1}}-\xi_ix_2^{a^{\prime}_{2,2}})$
as a product of irreducibles. In particular, any factor of
$x_1^{a_{1,1}}-x_2^{a_{2,2}}$ is of the form
$\prod_{j=1}^{r}(x_1^{a^{\prime}_{1,1}}-\xi_{i_j}x_2^{a^{\prime}_{2,2}})$,
where $1\leq r\leq d$, i.e., $x_1^{ra^{\prime}_{1,1}}+
\eta_1x_1^{(r-1)a^{\prime}_{1,1}}x_2^{a^{\prime}_{2,2}}+\ldots
+\eta_{r-1}x_1^{a^{\prime}_{1,1}}x_2^{(r-1)a^{\prime}_{2,2}}
+\eta_rx_2^{ra^{\prime}_{2,2}}$, for some $\eta_j\in \tilde{k}$, which
may or not be in $k$. This result recovers \cite[Lemma~8.2]{op2}.
}\end{example}

\begin{example}\label{simplestexn=4}
{\rm Let $I=(x_1^3-x_2x_3x_4,x_2^3-x_1x_3x_4,x_3^3-x_1x_2x_4,
  x_4^3-x_1x_2x_3)\subset A$, be the PCB ideal of
  Example~\ref{simplestn=4}. We know that $m(I)=(16,16,16,16)$ and
  $d=\gcd(m(I))=16$. Thus, by Theorem~\ref{main}, $I$ has at most
  seventeen primary components, one of them embedded. By
  Theorem~\ref{embeddedcomp}, $\fc=I+(x_2x_3^2)$ is an irredundant
  embedded $\fm$-primary component of $I$.

One can check that
\begin{eqnarray*}
\left(\begin{array}{rrrr}
1&0&0&0
\\-1&1&0&0
\\0&-1&1&0
\\1&1&1&1
\end{array}
\right)
\left(\begin{array}{rrrr}
3&-1&-1&-1
\\-1&3&-1&-1
\\-1&-1&3&-1
\\-1&-1&-1&3
\end{array}
\right)
\left(\begin{array}{rcccrrr}
1&2&1&1
\\1&3&1&1
\\1&3&2&1
\\0&0&0&1
\end{array}
\right)=
\left(\begin{array}{rcccrrr}
1&0&0&0
\\0&4&0&0
\\0&0&4&0
\\0&0&0&0
\end{array}
\right)
\end{eqnarray*}
is a normal decomposition of $L$. In particular, the invariant factors
of $L$ are $d_1=1, d_2=4, d_3=4$. Suppose that $k=\bc$.  Then
$\Lambda(D)=\{1\}\times \{1,i,-1,-i\}\times \{1,i,-1,-i\}\subset
\bc^{3}$. According to Theorem~\ref{main}, for a
$\lambda\in\Lambda(D)$, the natural morphism
$\varphi_{\lambda}:k[x_1,x_2,x_3,x_4]\to k[t]$ is defined by setting
$\varphi_{\lambda}(x_1)=\lambda_1\lambda_2^{-1}t$,
$\varphi_{\lambda}(x_2)=\lambda_2\lambda_3^{-1}t$,
$\varphi_{\lambda}(x_3)=\lambda_3t$ and $\varphi_{\lambda}(x_4)=t$. To
simplify notations we just write the ordered $4$-tuple
$(\lambda_1\lambda_2^{-1}t, \lambda_2\lambda_3^{-1}t,\lambda_3t,t)$ to
decribe the morphism $\varphi_{\lambda}$. Using this notation, the
sixteen morphisms are the following:
\begin{eqnarray*}
&&(t,t,t,t),(t,-it,it,t),(t,-t,-t,t),(t,it,-it,t),
  (-it,it,t,t),(-it,t,it,t),\\&&(-it,-it,-t,t),(-it,-t,-it,t),
  (-t,-t,t,t),(-t,it,it,t),(-t,t,-t,t),\\&&(-t,-it,-it,t),
  (it,-it,t,t),(it,-t,it,t),(it,it,-t,t),(it,t,-it,t).
\end{eqnarray*}
Therefore, $I=\cap_{i=1}^{16}\fa_i\cap \fc$, where the sixteen minimal
primary components $\fa_i$ are the kernels of the preceding morphisms:
\begin{eqnarray*}
&&\fa_1=(x_1-x_4,x_2-x_4,x_3-x_4), 
\fa_2=(x_1-x_4,x_2+ix_4,x_3-ix_4),
  \\&&\fa_3=(x_1-x_4,x_2+x_4,x_3+x_4),
  \fa_4=(x_1-x_4,x_2-ix_4,x_3+ix_4),
  \\&&\fa_5=(x_1+ix_4,x_2-ix_4,x_3-x_4),
  \fa_6=(x_1+ix_4,x_2-x_4,x_3-ix_4),
  \\&&\fa_7=(x_1+ix_4,x_2+ix_4,x_3+x_4),
  \fa_8=(x_1+ix_4,x_2+x_4,x_3+ix_4),
  \\&&\fa_9=(x_1+x_4,x_2+x_4,x_3-x_4),
  \fa_{10}=(x_1+x_4,x_2-ix_4,x_3-ix_4),
  \\&&\fa_{11}=(x_1+x_4,x_2-x_4,x_3+x_4),
  \fa_{12}=(x_1+x_4,x_2+ix_4,x_3+ix_4),
  \\&&\fa_{13}=(x_1-ix_4,x_2+ix_4,x_3-x_4),
  \fa_{14}=(x_1-ix_4,x_2+x_4,x_3-ix_4),
  \\&&\fa_{15}=(x_1-ix_4,x_2-ix_4,x_3+x_4),
  \fa_{16}=(x_1-ix_4,x_2-x_4,x_3+ix_4).
\end{eqnarray*}
Let us obtain the minimal primary components of $I$ over $\br$ (and
similarly over $\bq$). Consider the ideal
$I_{2,4}:=(x_1-x_4,x_2+x_3,x_3^2+x_4^2)$ in
$A=\bc[\underline{x}]$. Clearly, $A/I_{2,4}\cong
\bc[x_3,x_4]/(x_3^2+x_4^2)$, so $I_{2,4}$ is a complete intersection
of height 3, in particular, unmixed. Moreover, $I_{2,4}\subseteq
\fa_2\cap \fa_4$, and if $\fp$ is a prime over $I_{2,4}$ we see that
$\fp$ contains $\fa_2$ or $\fa_4$. Thus $\fa_2$ and $\fa_4$ are the
associated primes of $I_{2,4}$. Since $x_3+ix_4\not\in\fa_2$,
$x_3-ix_4$ and $x_3^2+x_4^2$ are associated in $A_{\mathfrak{a}_2}$
and
$(I_{2,4})_{\mathfrak{a}_2}=(\fa_2\cap\fa_4)_{\mathfrak{a}_2}$. Analogously
$(I_{2,4})_{\mathfrak{a}_4}=(\fa_2\cap\fa_4)_{\mathfrak{a}_4}$. Therefore
$I_{2,4}=\fa_2\cap \fa_4$. Similarly, we have
\begin{eqnarray*}
&&I_{5,13}:=(x_1+x_2,x_2^2+x_4^2,x_3-x_4)=\fa_5\cap\fa_{13},\\&&
  I_{6,16}:=(x_1+x_3,x_2-x_4,x_3^2+x_4^2)=\fa_6\cap\fa_{16},\\&&
  I_{7,15}:=(x_1-x_2,x_2^2+x_4^2,x_3+x_4)=\fa_7\cap\fa_{15},\\&&
  I_{8,14}:=(x_1-x_3,x_2+x_4,x_3^2+x_4^2)=\fa_8\cap\fa_{14}\mbox{ and
  }\\&&I_{10,12}:=(x_1+x_4,x_2-x_3,x_3^2+x_4^2)=\fa_{10}\cap\fa_{12}.
\end{eqnarray*}
Therefore, $I=\fa_1\cap\fa_3\cap\fa_9\cap\fa_{11}\cap I_{2,4}\cap
I_{5,13}\cap I_{6,16}\cap I_{7,15}\cap I_{8,14}\cap I_{10,12}\cap
\fc$. Note that the ideals appearing in this expression are generated
by binomials with coefficients in $\br$. Let us momentarily denote by
$\tilde{I}$, $\tilde{\fa}_i$, $\tilde{I}_{i,j}$ and $\tilde{\fc}$ the
corresponding ideals considered in $R=\br[\underline{x}]$, i.e.,
$\tilde{I}=(x_1^3-x_2x_3x_4,x_2^3-x_1x_3x_4,x_3^3-x_1x_2x_4,
x_4^3-x_1x_2x_3)R$, $\tilde{\fa}_1=(x_1-x_4,x_2-x_4,x_3-x_4)R$ and so
on.  Clearly, their extension in $A$ are the original ideals, i.e.,
$\tilde{I}A=I$, $\tilde{\fa}_iA=\fa_i$, $\tilde{I}_{i,j}A=I_{i,j}$ and
$\tilde{\fc}A=\fc$. Moreover, since $R\to A$ is faithfully flat,
$\tilde{I}=\tilde{I}A\cap R=I\cap R$,
$\tilde{\fa}_i=\tilde{\fa}_iA\cap R=\fa_i\cap R$,
$\tilde{I}_{i,j}=\tilde{I}_{i,j}A\cap R=I_{i,j}\cap R$ and
$\tilde{\fc}=\tilde{\fc}A\cap R=\fc\cap R$.

Hence $\tilde{I}=\tilde{\fa}_1\cap\tilde{\fa}_3\cap\tilde{\fa}_9\cap
\tilde{\fa}_{11}\cap\tilde{I}_{2,4}\cap\tilde{I}_{5,13}\cap\tilde{I}_{6,16}
\cap\tilde{I}_{7,15}\cap\tilde{I}_{8,14}\cap\tilde{I}_{10,12}\cap\tilde{\fc}$
is a minimal primary decompostion of $\tilde{I}$ in
$R=\br[\underline{x}]$. Indeed, $\tilde{\fa}_{i}$ is a prime ideal of
$R$ for $i=1,3,9$ and $11$. Moreover,
$R/\tilde{I}_{2,4}\cong\br[x_3,x_4]/(x_3^2+x_4^2)$, a domain, so
$\tilde{I}_{2,4}=(x_1-x_4,x_2+x_3,x_3^2+x_4^2)$ is a prime ideal of
$R$. Analogously, $I_{5,13}$, $I_{6,16}$ $I_{7,15}$, $I_{8,14}$ and
$I_{10,12}$ are prime ideals of $R$. Moreover, applying
Theorem~\ref{embeddedcomp} to the PCB ideal $\tilde{I}$ of
$R=\br[\underline{x}]$, one obtains $\tilde{\fc}$ as an irredundant
embedded primary component of $\tilde{I}$. Finally, the full
decomposition is irredundant because all the primes $\tilde{\fa}_i$
and $\tilde{I}_{i,j}$ appearing are different and of the same height.

Now suppose that $k=\bz/2\bz$. As before, $I$ is not unmixed,
$\fc=I+(x_2x_3^2)$ is an irredundant embedded component of $I$ and
$d_1=1$, $d_2=4$ and $d_3=4$ are the invariant factors of $L$. By
Lemma~\ref{easyexpofIB},
$IB=(y_1-1,y_2^4-1,y_3^4-1)B=(y_1-1,(y_2-1)^4,(y_3-1)^4)B$.  For
$\lambda=(1,1,1)$, we clearly have $\fb_{\lambda}^4\subsetneq
IB\subsetneq \fb_{\lambda}$, where $\fb_{\lambda}=(y_1-1,y_2-1,y_3-1)$
(see Notation~\ref{blambda}). By Lemma~\ref{alambda},
$\fb_{\lambda}^c=\fa_{\lambda}$ and
$\fa_{\lambda}=(x_1-x_4,x_2-x_4,x_3-x_4)$. Hence
$\fa_{\lambda}^4\subseteq (\fb_{\lambda}^4)^c\subseteq S(I)\subseteq
\fa_{\lambda}$.  Since $S(I)$ is unmixed (cf. Proposition~\ref{S(I)}),
$S(I)$ is an $\fa_{\lambda}$-primary ideal. Therefore $I$ has exactly
two primary components, namely $\fa_{\lambda}$ and $\fm$.
}\end{example}

\begin{example}{\rm 
As a generalization of Example~\ref{simplestexn=4}, for $n\geq 3$, let
\begin{eqnarray*}
I=(x_1^{n-1}-x_2\cdots x_n,\ldots ,x_n^{n-1}-x_1\cdots x_{n-1})
\end{eqnarray*}
be the PCB ideal associated to the $n\times n$ PCB matrix $L$ with
diagonal entries $n-1$ and off-diagonal entries $-1$. One can check
that the invariant factors of $L$ are $d_1=1,d_2=n,\ldots ,d_{n-1}=n$
and that a normal decomposition of $PLQ=D$ is given by
\begin{eqnarray*}
P=\left(\begin{array}{rrrcrrr}
1&0&0&\ldots&0&0&0
\\-1&1&0&\ldots&0&0&0
\\0&-1&1&\ldots&0&0&0
\\\vdots&\vdots&\vdots&&\vdots&\vdots&\vdots
\\0&0&0&\ldots&1&0&0
\\0&0&0&\ldots&-1&1&0
\\1&1&1&\ldots&1&1&1
\end{array}
\right)\mbox{ and }
Q=\left(\begin{array}{rcccrrr}
1&n-2&n-3&\ldots&2&1&1
\\1&n-1&n-3&\ldots&2&1&1
\\1&n-1&n-2&\ldots&2&1&1
\\\vdots&\vdots&\vdots&&\vdots&\vdots&\vdots
\\1&n-1&n-2&\ldots&3&1&1
\\1&n-1&n-2&\ldots&3&2&1
\\0&0&0&\ldots&0&0&1
\end{array}
\right).
\end{eqnarray*}
In particular, $d=d_1\ldots d_{n-1}=n^{n-2}$ and $\nu(I)=(1,\ldots
,1)$. Therefore, by Theorem~\ref{main}, $I$ has at most
$d+1=n^{n-2}+1$ prime components. Suppose that $k=\bc$. For
$\lambda\in\Lambda(D)$, let $\fa_{\lambda}=\ker(\varphi_{\lambda })$
where $\varphi_{\lambda}:A\to k[t]$ is the natural map defined by the
rule $\varphi_{\lambda}(x_{i})=\lambda_i\lambda_{i+1}^{-1}t$, for
$i=1,\ldots ,n-2$, $\varphi_{\lambda}(x_{n-1})=\lambda_{n-1}t$ and
$\varphi_{\lambda}(x_n)=t$. Then each $\fa_{\lambda}$ is a prime
ideal. If $n=3$, $I=\cap_{\lambda\in\Lambda(D)}\fa_{\lambda}$, whereas
if $n\geq 4$, $I=\cap_{\lambda\in\Lambda(D)}\fa_{\lambda}\cap \fc$ with
$\fc=I+(x^{b(n)})$ an $\fm$-primary ideal; in each case, these
decompositions are irredundant.}\end{example}

\noindent {\sc Acknowledgement}. We gratefully acknowledge Josep
\`Alvarez Montaner's interest and our useful discussions with him. We
are grateful to Rafael H. Villarreal for his stimulating and generous
comments. The authors gratefully acknowledge financial support from
the research grant MTM2010-20279-C02-01 and the Universitat
Polit\`ecnica de Catalunya during the development of this
research. The first author benefitted from the hospitality offered by
the Departament de Matem\`atica Aplicada 1 at UPC during his research
visit there.

\renewcommand{\baselinestretch}{0.8}
{\small
}
\begin{center}
\parbox[t]{7cm}{\footnotesize

\noindent Liam O'Carroll

\noindent Maxwell Institute for Mathematical Sciences

\noindent School of Mathematics

\noindent University of Edinburgh

\noindent EH9 3JZ, Edinburgh, Scotland

\noindent L.O'Carroll@ed.ac.uk } 
\,
\parbox[t]{6.2cm}{\footnotesize

\noindent Francesc Planas-Vilanova

\noindent Departament de Matem\`atica Aplicada~1

\noindent Universitat Polit\`ecnica de Catalunya

\noindent Diagonal 647, ETSEIB

\noindent 08028 Barcelona, Catalunya

\noindent francesc.planas@upc.edu}
\end{center}
\end{document}